\numberwithin{equation}{section}
\newcommand{\Rmnum}[1]{\expandafter\@slowromancap\romannumeral #1@}
\def\eps{\varepsilon}
\def\R{\mathbb R}
\def\C{\mathbb C}
\def\RR{\overline{{\mathbb R}^n}}
\def\N{\mathbb N}
\def\card{\operatorname{card}}
\def\capacity{\operatorname{cap}}
\def\deg{\operatorname{deg}}
\def\dim{\operatorname{dim}}
\def\diam{\operatorname{diam}}
\newtheorem{lemma}{Lemma}[section]
\newtheorem{theorem}{Theorem}[section]
\newtheorem{cor}{Corollary}[section]
\theoremstyle{definition}
\newtheorem{definition}{Definition}[section]
\newtheorem{conjecture}{Conjecture}[section]
\theoremstyle{remark}
\newtheorem*{rem}{Remark}
\title[Non-uniformly quasiregular 
maps]{Fatou-Julia theory for non-uniformly quasiregular maps}
\author{Walter Bergweiler}
\email{bergweiler@math.uni-kiel.de}
\address{Mathematisches Seminar,
Christian--Albrechts--Universit\"at zu Kiel,
Lude\-wig--Meyn--Str.~4,
D--24098 Kiel,
Germany}
\subjclass{Primary 37F10; Secondary 30C65, 30D05}
\thanks{Supported 
by a Chinese Academy of Sciences Visiting Professorship for Senior
International Scientists, Grant No.\ 2010 TIJ10, the Deutsche
Forschungsgemeinschaft, Be 1508/7-1,  the EU Research Training
Network CODY and the ESF Networking Programme HCAA}
\begin{document}
\begin{abstract}
Many results of 
the Fatou-Julia iteration theory of rational functions
extend to uniformly quasi\-regular maps in higher dimensions. 
We obtain results of this type for  certain classes of
quasi\-regular maps which are not uniformly quasi\-regular.
\end{abstract}
\maketitle
\section{Introduction and main results}\label{intro}
Quasiregular maps are a natural generalization of holomorphic 
maps to higher dimensions. It is the purpose of this paper to 
show that certain results of holomorphic dynamics have analogs for
quasi\-regular maps.
We will recall the definition
and basic properties
of quasi\-regular maps in section~\ref{quasi},
defining in particular terms like the dilatation $K(f)$ and the inner 
dilatation $K_I(f)$ of a quasi\-regular map $f$ that are used in 
the following.

An important result about quasi\-regular maps is
Rickman's~\cite{Rickman80,Rickman85}
analog of Picard's theorem. He showed that there
exists a constant $q=q(n,K)$ such that if 
$a_1,\dots,a_q\in\R^n$ are distinct and $f:\R^n\to \R^n\setminus
\{a_1,\dots,a_q\}$ is $K$-quasi\-regular, then $f$ is constant.
Note that Picard's  theorem says that $q(2,1)=2$.

Miniowitz~\cite{Miniowitz82} used an extension of the
Zalcman lemma~\cite{Zalcman75} to quasi\-regular maps to obtain
an analog of Montel's theorem from Rickman's result.
Given the central role of Montel's theorem in holomorphic
dynamics, it is seems clear that Miniowitz's theorem will
be important in quasi\-regular dynamics. However, 
in order to apply this result
to the family $\{f^j\}$ of iterates a quasi\-regular map~$f$, one
has to assume that all $f^j$ are $K$-quasi\-regular
with the same~$K$. Quasiregular maps with this property are
called \emph{uniformly quasi\-regular}.
For uniformly quasi\-regular self-maps of
the one point compactification
$\overline{\R^n}:=\R^n\cup \{\infty\}$ of~$\R^n$
an iteration
theory in the spirit of Fatou and Julia has been developed
by Hinkkanen, Martin, Mayer and others~\cite{Hinkkanen04,Martin97,Mayer97};
see~\cite[Section~4]{Bergweiler10},
\cite[Chapter~21]{Iwaniec01} and \cite[Chapter~4]{Siebert04}
for surveys.

As in the classical case of rational maps, the 
Julia set $J(f)$ of a uniformly quasi\-regular map 
$f:\RR\to\RR$ is defined as the set of all points where the
family of iterates fails to be normal.
Assuming that the degree of $f$ is at least $2$
one finds that $J(f)$ is perfect; in particular, $J(f)\neq\emptyset$. 
Here the degree $\deg(f)$ of a (not necessarily uniformly) quasi\-regular map
$f:\RR\to\RR$ is defined as the maximal cardinality
of the preimage of a point;
that is, 
$$\deg(f):=\max_{x\in\RR}\card f^{-1}(x),$$ 
where $\card A$ denotes the cardinality of a set~$A$.

For $x\in \RR$ 
we define the forward orbit 
$O^+(x):=\{f^j(x):j\in\N\}$ and for 
$X\subset \RR$ we put 
$O^+(X):=\bigcup_{x\in X}O^+(x)$.
One direct consequence of Miniowitz's theorem is the so-called
expansion property which says that if $U$ is an
open set intersecting the Julia set, then $\RR\setminus O^+(U)$
is finite. In fact, this set contains at most $q(n,K)$ points,
provided $K(f^j)\leq K$ for all $j\in\N$.

We refer to
the papers mentioned above -- and the references cited 
therein -- for further results about the dynamics of 
uniformly quasi\-regular maps.

Sun and Yang~\cite{SunYang99,SunYang00,SunYang01}
showed that in dimension $2$ some results of the Fatou-Julia theory
still hold even for non-uniformly quasi\-regular
maps, provided the degree exceeds the dilatation. However, 
the definition of the Julia set via
non-normality is not adequate here. Instead Sun and Yang used 
the expansion property to define the Julia set. They thus defined
the Julia set $J(f)$ of a quasi\-regular 
self-map $f$ of the Riemann sphere $\overline{\C}$ 
as the set of all $z\in\overline{\C}$ such that $\overline{\C}
\setminus O^+(U)$ contains at most two points, for every
neighborhood $U$ of~$z$.
They showed that if $\deg(f)>K(f)$, then $J(f)\neq\emptyset$,
and many results of the Fatou-Julia theory hold.
For an exposition of their results we refer 
to~\cite[Section 5]{Bergweiler10}.

There have been only a few papers concerned with the 
the dynamics of non-uniformly quasi\-regular maps in higher
dimensions. In~\cite{Bergw,BergwErem,Bergweiler09} 
certain quasi\-regular maps $f:\R^n\to\R^n$ with an essential
singularity at infinity were considered. Such maps can be 
thought of as analogs of transcendental entire functions.
In contrast, 
a quasi\-regular map $f:\R^n\to\R^n$ is said to be of 
\emph{polynomial type} if $\lim_{x\to\infty}f(x)=\infty$. Such a
map  $f$ extends to a quasi\-regular self-map
of $\RR$ by putting $f(\infty)=\infty$.
The dynamics of such maps where studied by Fletcher and 
Nicks~\cite{FletcherNicks} who proved that if $\deg(f)>K_I(f)$,
then 
the boundary of the 
\emph{escaping set} $I(f):=\{x\in\R^n: f^j(x)\to\infty\}$
has many properties usually associated with the Julia set.
Note that $J(f)=\partial I(f)$ for non-linear
polynomials $f:\C\to\C$, 
as well as transcendental entire functions~\cite{Eremenko89}.

We shall be concerned with quasi\-regular self-maps of
$\RR$ which need not be of polynomial type.
Such maps can be considered as analogs
of rational functions.
In order to state our first result, we need to introduce sets of 
capacity zero; cf. \cite[section II.10]{Rickman93}.
For an open set $G\subset\R^n$ and a 
non-empty compact subset $C$ of $G$ the pair
$(G,C)$ is called a \emph{condenser} and its  \emph{capacity}
$\capacity (G,C)$ is defined by
\[
\capacity (G,C):=\inf_u\int_G\left|\nabla u\right|^n dm,
\]
where the infimum is taken over all non-negative functions
$u\in C^\infty_0(G)$ satisfying $u(x)\geq 1$ for all $x\in C$.
(Here $C^\infty_0(G)$ may be replaced by the
Sobolev space $W^1_{n,\text{loc}}(G)$, which also appears in the 
definition of quasi\-regularity;  cf.\ section~\ref{quasi}.)

It turns out \cite[Lemma III.2.2]{Rickman93}
 that if $\capacity (G,C)=0$ 
for some bounded open set $G$ containing~$C$, 
then  $\capacity (G',C)=0$ 
for every bounded open set $G'$ containing~$C$.
In this case we say that $C$ is of  \emph{capacity zero} and denote
this by $\capacity C=0$.
Otherwise we say that $C$ has \emph{positive capacity} and 
write $\capacity C>0$.
Note that this does not mean that 
$\capacity C$ is a positive number.
 (The capacity is defined for condensers, not for sets.) 
However, 
we mention that
Vuorinen~\cite{Vuorinen83} has introduced a set function $c$ satisfying
$c(C)>0$ if and only if $\capacity C>0$.
M\"obius transformations preserve the capacity of a condenser
and hence preserve sets of capacity zero, leading to an obvious
 extension of the definition
to subsets of $\RR$; see~\cite[Section~1.3]{Reshetnyak} for  the 
definition and a discussion of
M\"obius transformations.

We mention that sets of capacity zero are totally disconnected
\cite[Corollary III.2.5]{Rickman93}
and in fact of Hausdorff dimension zero \cite[Corollary VII.1.15]{Rickman93};
see also Lemma~\ref{cap-haus} below for a stronger statement 
involving Hausdorff measure.
On the other hand, a finite set has capacity zero.

\begin{theorem}\label{thm1}
Let $f:\RR\to\RR$ be quasi\-regular. Suppose that $\deg(f)>K_I(f)$.
Then there exists $x\in\RR$ such that 
\begin{equation}\label{1c}
\capacity\!\left(\RR\setminus O^+(U)\right)=0
\end{equation}
for every neighborhood $U$ of~$x$.
\end{theorem}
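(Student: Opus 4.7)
The hypothesis $\deg(f)>K_I(f)$ enters only through its effect on iterates: since $\deg(f^j)=\deg(f)^j$ and $K_I(f^j)\le K_I(f)^j$ by sub-multiplicativity of the inner dilatation under composition, one has $\deg(f^j)/K_I(f^j)\to\infty$. This divergence is the quasi\-regular substitute for the uniform dilatation bound used in the Hinkkanen-Martin-Mayer theory of uniformly quasi\-regular maps.

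I would construct $x$ by backward iteration of an arbitrary base point. Fix $y_0\in\RR$ and, using surjectivity of $f^j$, pick $z_j\in f^{-j}(y_0)$. By compactness of $\RR$, a subsequence $z_{j_k}$ converges to some $x\in\RR$; this is my candidate. For any open neighborhood $U$ of $x$, the inclusion $z_{j_k}\in U$ eventually yields
\[
y_0 = f^{j_k}(z_{j_k}) \in f^{j_k}(U) \subseteq O^+(U),
\]
so $y_0\notin E_U:=\RR\setminus O^+(U)$. The same argument shows $y\notin E_U$ for every $y$ whose backward orbit accumulates at~$x$.

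To upgrade this single-point conclusion to $\capacity E_U=0$, I observe first that $E_U$ is closed and backward invariant under $f$: if $y\in f^j(U)$ for some $j\ge 1$ then $f(y)\in f^{j+1}(U)\subseteq O^+(U)$, so by contraposition $f(y)\in E_U$ implies $y\in E_U$, i.e., $f^{-1}(E_U)\subseteq E_U$. The main technical input, and in my view the heart of the proof, is a capacity analogue of a Montel-type theorem: \emph{under the hypothesis $\deg(f^j)/K_I(f^j)\to\infty$, a closed backward invariant set $E$ that avoids a non-empty open set $U$ must satisfy $\capacity E=0$.} Applied to $E=E_U$ and our~$U$ (noting $U\subseteq O^+(U)$, so $E_U\cap U=\emptyset$), this gives the desired conclusion.

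The main obstacle is therefore proving this italicised lemma. My plan for it is to apply a Poletsky/KO-modulus inequality to the $\deg(f^j)=\deg(f)^j$ components of $(f^j)^{-1}(V)$ for a small ball $V\subseteq U$; the inequality bounds the sizes of these components in terms of $K_I(f^j)\le K_I(f)^j$. Their combined ``coverage'' of $\RR$ grows at rate $(\deg(f)/K_I(f))^j\to\infty$ in a capacity sense, and since $E$ is disjoint from every such component (by backward invariance together with $E\cap U=\emptyset$), the capacity of $E$ must vanish. This is the quasi\-regular analogue, in the capacity category, of the finiteness of the exceptional set in Picard's theorem, and draws on Rickman's value-distribution theorems together with the capacity-zero versus Hausdorff-measure comparisons alluded to in Lemma~\ref{cap-haus}.
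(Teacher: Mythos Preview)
Your proposed main lemma is false. Take $f(z)=z^2$ on $\overline{\C}$, which satisfies $\deg(f)=2>1=K_I(f)$. The closed unit disk $E=\overline{\D}$ is closed, satisfies $f^{-1}(E)=E$ (hence is backward invariant in any reasonable sense), avoids the nonempty open set $\{|z|>2\}$, yet has positive capacity. Your sketched mechanism for the lemma---that the preimages $(f^j)^{-1}(V)$ of a ball $V\subset U$ ``cover'' the sphere in a capacity sense---breaks down for exactly the same reason: if $V$ lies outside $\overline{\D}$ then so do all of its preimages, and $E$ is never touched. Note also that, as you state it, the lemma makes no reference to your special point~$x$; were it true it would show that \emph{every} point of $\RR$ lies in $J(f)$, which already fails for~$z^2$.

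Your construction of $x$ is correspondingly too weak: choosing $z_j\in f^{-j}(y_0)$ arbitrarily can land you in the Fatou set. If $f$ has a Siegel disk $D$ and $y_0\in D$, one may take for $z_j$ the unique preimage of $y_0$ under $f^j|_D$ (a bijection of~$D$); any limit point $x$ of $(z_j)$ then lies in~$D$, and for $U\subset D$ one has $\RR\setminus O^+(U)\supset \RR\setminus D$, of positive capacity. So for some legitimate choices your candidate $x$ does not satisfy~\eqref{1c}. (A minor point: with the paper's convention $O^+(U)=\bigcup_{j\ge 1}f^j(U)$, so $U\subseteq O^+(U)$ is not automatic; one should use $f(U)$ instead.)

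The paper's proof proceeds quite differently and makes the choice of $x$ do real work. One covers $\RR$ by $O(k^n)$ chordal balls of radius $1/k$; by pigeonhole some ball $\overline{B}_\chi(x_k,1/k)$ carries average counting function $A(\overline{B}_\chi(x_k,1/k),f^k)\ge \deg(f)^k/(Lk^n)$, and $x$ is taken to be a limit point of $(x_k)$. The analytic input is the Mattila--Rickman estimate (Theorem~\ref{thm-mr2}): if $g$ is quasi\-regular on a ball and omits a set $F$ of positive capacity on a slightly larger concentric ball, then $A(\overline{B},g)\le C\,K_I(g)$. Applied with $g=f^k$ on a fixed ball around $x$---assuming for contradiction that $F=\RR\setminus O^+(U)$ has positive capacity---this gives $A\le C\,K_I(f)^k$, contradicting the pigeonhole lower bound since $\deg(f)>K_I(f)$. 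The point is that the lower bound on $A$ is a property of the \emph{specific} $x$, not of an arbitrary limit of backward iterates.
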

As in~\cite{FletcherNicks,SunYang99} the winding map (cf.
\cite[Section I.3.1]{Rickman93}) shows that the 
hypothesis  that
$\deg(f)>K_I(f)$ cannot be weakened to  $\deg(f)\geq K_I(f)$.

We note that 
if $f$ is uniformly quasi\-regular
and $\deg(f)\geq 2$, then the
hypothesis of Theorem~\ref{thm1} is satisfied for some
iterate of~$f$.
The hypothesis of Theorem~\ref{thm1} and subsequent
theorems could be weakened
to $\deg(f^p)> K_I(f^p)$ for some $p\in\N$ in order to cover all
uniformly quasi\-regular maps, but for simplicity
we restrict ourselves to the case $p=1$.

We mention that the composition of a uniformly quasi\-regular map
with a M\"obius transformation need not be uniformly quasi\-regular.
In contrast, the hypothesis of Theorem~\ref{thm1} is preserved
under compositions with M\"obius transformations. This yields many
examples of quasi\-regular maps satisfying the hypothesis of 
Theorem~\ref{thm1} which are not uniformly quasi\-regular.

Following Sun and Yang we define the Julia set as follows.
\begin{definition}\label{defJ}
Let 
$f:\RR\to\RR$ be quasi\-regular.  Then
the set of all $x\in\RR$ such that~\eqref{1c} 
holds for every neighborhood $U$ of $x$ is called the 
\emph{Julia set} of $f$ and denoted by $J(f)$.
\end{definition}
Theorem~\ref{thm1} says that $J(f)\neq\emptyset$ if $\deg(f)>K_I(f)$.
As in the case of rational functions 
it is easy to see that $J(f)$ is closed and completely invariant;
cf.~\cite[Theorem~3.2.4]{Beardon91},
\cite[Lemma~4.3]{Milnor06}
 or~\cite[Section 25]{Steinmetz93}.
Here a set $A$ is called \emph{completely invariant} (under $f$) if
$x\in A$ implies that $f(x)\in A$, and vice versa.
It follows that $J(f)$ has empty interior unless 
$J(f)=\RR$; cf.~\cite[Theorem~4.2.3]{Beardon91},
\cite[Corollary~4.11]{Milnor06}
or~\cite[Section 30]{Steinmetz93}.

Definition~\ref{defJ} is justified by the following result.
\begin{theorem}\label{thm0}
For a uniformly quasi\-regular map $f:\RR\to\RR$
the definition of $J(f)$ using non-normality
coincides with the one  given in Definition~\ref{defJ}.
\end{theorem}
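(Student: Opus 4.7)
The plan is to verify $J_N(f)=J(f)$ by proving both inclusions, where I write $J_N(f)$ for the Julia set defined via non-normality of the iterates $\{f^j\}$ and $J(f)$ for the set introduced in Definition~\ref{defJ}.

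For $J_N(f)\subset J(f)$, take any $x\in J_N(f)$ and any neighborhood $U$ of $x$. Then $U$ intersects $J_N(f)$, so the expansion property recalled in the introduction—an immediate consequence of Miniowitz's theorem, applied to the family $\{f^j\}$, which is uniformly $K$-quasi\-regular by assumption—yields that $\RR\setminus O^+(U)$ contains at most $q(n,K)$ points. Since finite sets have capacity zero (as noted in the introduction), \eqref{1c} holds for every such $U$, and so $x\in J(f)$.

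For the reverse inclusion $J(f)\subset J_N(f)$ I argue by contrapositive. Suppose $x$ lies in the classical Fatou set $\RR\setminus J_N(f)$. This set is open by the non-normality definition and completely invariant under $f$ (standard properties of the classical Julia set, valid also in the uniformly quasi\-regular setting, cf.\ the surveys cited in the introduction). Hence I may choose a neighborhood $U\subset \RR\setminus J_N(f)$ of $x$ and observe that $f^j(U)\subset \RR\setminus J_N(f)$ for every $j\in\N$, so $O^+(U)\subset\RR\setminus J_N(f)$ and therefore
\[
J_N(f)\subset \RR\setminus O^+(U).
\]
To conclude $x\notin J(f)$ it is enough to show that $\capacity J_N(f)>0$.

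The main obstacle is precisely this positivity of capacity in the nontrivial case $J_N(f)\neq\emptyset$. In the dynamically interesting range $\deg(f)\geq 2$—which, as pointed out after Theorem~\ref{thm1}, covers all uniformly quasi\-regular maps after passage to a suitable iterate and an application of the iteration-invariance of both definitions of the Julia set—the existing Fatou-Julia theory of uniformly quasi\-regular maps (Hinkkanen, Martin, Mayer and others, cited in the introduction) shows that $J_N(f)$ is nonempty, perfect, and in fact uniformly perfect, whence $\capacity J_N(f)>0$, as required. The remaining degenerate case $\deg(f)\leq 1$, in which $J_N(f)$ can be empty or finite, is handled by a direct argument: the normality of $\{f^j\}$ on $\RR$ and equicontinuity in the spherical metric force orbit closures of sufficiently small neighborhoods to lie in proper closed subsets of $\RR$, so $\RR\setminus O^+(U)$ contains a nonempty open set and hence has positive capacity, giving $J(f)=\emptyset=J_N(f)$ in that case.
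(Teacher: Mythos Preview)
Your proof of both inclusions in the case $\deg(f)\geq 2$ is essentially identical to the paper's: the inclusion $J_N(f)\subset J(f)$ via Miniowitz's theorem, and the reverse inclusion via complete invariance of the Fatou set together with the Fletcher--Nicks result that $J_N(f)$ has positive capacity. The paper phrases the latter as $\dim J_N(f)>0$ combined with Lemma~\ref{cap-dim}, whereas you invoke uniform perfectness directly; both are consequences of the same theorem of Fletcher and Nicks~\cite{FletcherNicks11}.

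Your handling of the degenerate case $\deg(f)\leq 1$, however, is flawed. The claim that $J_N(f)=\emptyset$ in this case is false: for the M\"obius transformation $f(z)=2z$ on $\CC$ (uniformly $1$-quasiregular, degree~$1$) one has $J_N(f)=\{0,\infty\}$, and indeed $J(f)=\{0,\infty\}$ in the sense of Definition~\ref{defJ} as well, since for any neighborhood $U$ of $0$ the forward orbit $O^+(U)$ omits only the point~$\infty$. Your appeal to the remark after Theorem~\ref{thm1} is also a misreading: that remark concerns achieving $\deg(f^p)>K_I(f^p)$ for uniformly quasiregular maps already of degree at least~$2$, and cannot be used to reduce degree-$1$ maps to higher degree, since all iterates of a degree-$1$ map have degree~$1$. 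The paper's own proof tacitly assumes $\deg(f)\geq 2$ (the Fletcher--Nicks result requires it), so this is not a gap relative to the paper; but your attempted extension to degree~$1$ is incorrect as written.
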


A point  $\xi\in\RR$ is called \emph{periodic} if there exists $p\in\N$ such
that $f^p(\xi)=\xi$. The smallest $p$ with this property is called
the \emph{period} of~$\xi$. 
We denote by $\chi$ the chordal metric on~$\RR$,
obtained via stereographic projection from the unit sphere
in~$\R^{n+1}$. 
Following Sun and Yang~\cite[Definition~4]{SunYang01} we say that
a periodic point $\xi$ of period $p$
is \emph{attracting} if there
exists $c\in (0,1)$ and a neighborhood $U$ of $\xi$ such that
$\chi(f^p(z),\xi)<c\, \chi(z,\xi)$ for all $z\in U$.
Similarly we say that $\xi$ is \emph{repelling} if
$\chi(f^p(z),\xi)>c \,\chi(z,\xi)$ for some $c>1$ and
all $z$ in some neighborhood of~$\xi$.

We note that other definitions of 
attracting and repelling have been used for
uniformly quasi\-regular maps (cf.~\cite{Hinkkanen04}
for a discussion), but
all definitions have in common that an attracting periodic point 
of period $p$ has a neighborhood where the iterates of $f^p$ 
converge uniformly to it.
For an attracting periodic point $\xi$ of period $p$
the set 
$$A(\xi):=\{x\in\RR: \lim_{j\to\infty} f^{pj}(x)= \xi\},$$
called the \emph{attracting basin} of~$\xi$, thus contains
a neighborhood of~$\xi$.
\begin{theorem}\label{thm3}
Let $f:\RR\to\RR$ be quasi\-regular with $\deg(f)>K_I(f)$.
If $\xi$ is an attracting periodic point of~$f$,
then $J(f)\cap  A(\xi)=\emptyset$ and
$J(f)\subset\partial A(\xi)$.
\end{theorem}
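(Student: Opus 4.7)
The plan splits naturally into two steps. For \emph{Step 1} (the disjointness $J(f)\cap A(\xi)=\emptyset$), I produce, for each $x\in A(\xi)$, a neighborhood whose forward orbit omits a nonempty open subset of~$\RR$. Let $p$ be the period of $\xi$. The attracting estimate provides a chordal ball $V=B_\chi(\xi,\rho)$ so small that $f^p(\overline V)\subset V$; then $W:=\bigcup_{r=0}^{p-1}f^r(\overline V)$ is compact and satisfies $f(W)\subset W$ by cyclic reindexing. For $\rho$ small, $W$ lies in a small neighborhood of the finite cycle $\{\xi,f(\xi),\dots,f^{p-1}(\xi)\}$ and hence is a proper subset of~$\RR$. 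Given $x\in A(\xi)$, pick $N$ with $f^{pN}(x)\in V$ and, by continuity, a neighborhood $U$ of~$x$ with $f^{pN}(U)\subset V$. After further shrinking $U$, the union $E:=\bigcup_{j=0}^{pN-1}f^j(U)\cup W$ remains a proper closed subset of~$\RR$ containing $O^+(U)$; its complement is nonempty and open, hence has positive capacity, so $x\notin J(f)$.

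For \emph{Step 2} ($J(f)\subset\partial A(\xi)$), in view of Step 1 it suffices to show $J(f)\subset\overline{A(\xi)}$. For $p=1$, $A(\xi)$ is backward $f$-invariant, so if $z\in J(f)$ had a neighborhood $U$ disjoint from $A(\xi)$, then $O^+(U)\cap A(\xi)=\emptyset$, which contradicts the fact that $\RR\setminus O^+(U)$ has capacity zero while the open set $A(\xi)$ has positive capacity. For general $p$, I would reduce to the fixed-point case by replacing $f$ with $g:=f^p$: since $\deg(g)=\deg(f)^p$ and $K_I(g)\leq K_I(f)^p$, the hypothesis $\deg(g)>K_I(g)$ is preserved, and $\xi$ is an attracting fixed point of $g$ with $A_g(\xi)=A(\xi)$. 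The $p=1$ argument applied to $g$ gives $J(g)\subset\partial A(\xi)$, and the conclusion for $f$ follows once one knows $J(f)\subset J(g)$.

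The principal obstacle is this last inclusion $J(f)\subset J(g)$. The easy direction $J(g)\subset J(f)$ is immediate from $O^+_g(U)\subset O^+_f(U)$, but the reverse does not follow formally, since the set-theoretic complement inclusion runs the wrong way. My plan is to exploit the decomposition $O^+_f(U)=\bigcup_{r=0}^{p-1}f^r(O^+_g(U))$, which yields $\RR\setminus O^+_f(U)=\bigcap_{r=0}^{p-1}(\RR\setminus f^r(O^+_g(U)))$, together with the fact (see \cite{Rickman93}) that non-constant quasi\-regular maps preserve the class of sets of capacity zero under direct and inverse images. Combined with the complete invariance of the Julia set, this should force $\RR\setminus O^+_g(U)$ to have capacity zero for every neighborhood $U$ of~$z$, completing the reduction.
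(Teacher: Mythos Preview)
Your Step~1 and the fixed-point case of Step~2 are correct and match the paper, which says the disjointness is ``easy to see'' and then runs exactly your $p=1$ contradiction. For general $p$ the paper does \emph{not} pass to $g=f^p$; it writes the same direct line ``$U\cap A(\xi)=\emptyset$ and thus $O^+(U)\cap A(\xi)=\emptyset$'' uniformly, so your reduction via $J(f)\subset J(g)$ is a different route.

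That reduction has a genuine gap. From your decomposition (up to the first $p-1$ iterates) one has
\[
\RR\setminus O^+_f(U)=\bigcap_{r=0}^{p-1}\bigl(\RR\setminus f^r(O^+_g(U))\bigr),
\]
and $z\in J(f)$ says only that this \emph{intersection} has capacity zero; that does not force the $r=0$ term $\RR\setminus O^+_g(U)$ to have capacity zero, since an intersection of finitely many sets of positive capacity may well have capacity zero. The preservation of capacity-zero sets under quasiregular maps runs in the wrong direction here: it yields that if $\capacity\bigl(\RR\setminus O^+_g(U)\bigr)=0$ then each $\RR\setminus f^r(O^+_g(U))\subset f^r\bigl(\RR\setminus O^+_g(U)\bigr)$ has capacity zero and hence so does the intersection---but that is precisely the easy inclusion $J(g)\subset J(f)$ you already noted. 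Complete invariance of $J(f)$ gives $f^r(z)\in J(f)$, yet the definition applied at $f^r(z)$ again only bounds complements of $O^+_f$-orbits, not of $O^+_g$-orbits, so the loop does not close. Thus $J(f)\subset J(f^p)$ remains an unproved assumption in your plan. You are right that the periodic case deserves care---the paper's implication tacitly uses $f^{-1}(A(\xi))\subset A(\xi)$, which fails for $p>1$ since $f^{-1}(A(\xi))=A(f^{p-1}(\xi))$---but your proposed detour does not close the gap as sketched.
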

For rational functions 
and, more generally, uniformly quasi\-regular
maps we have $J(f)=\partial A(\xi)$;
see~\cite[Corollary 4.12]{Milnor06}.
As shown in
\cite[Example~5.3]{Bergweiler10}, this need not be the case in the present
setting.

For a map $f:\RR\to\RR$ the \emph{exceptional set}
$E(f)$ is defined as the set of all $x\in\RR$ for which 
the backward orbit $O^-(x):=\bigcup_{j=1}^\infty f^{-j}(x)$ is finite.
\begin{theorem}\label{thm4}
Let $f:\RR\to\RR$ be quasi\-regular with $\deg(f)>K_I(f)$.
Then $E(f)$ is finite and consists of attracting periodic 
points.
In particular, 
$E(f)$ does not intersect $J(f)$.
\end{theorem}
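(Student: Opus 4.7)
The plan is to show, via a preimage-counting argument exploiting the degree hypothesis, that every $x\in E(f)$ is super-attracting periodic, and then to deduce finiteness.

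First I would let $x\in E(f)$ and set $A:=\{x\}\cup O^-(x)$. This is finite, and it is backward invariant, i.e.\ $f^{-1}(A)\subset A$, because any preimage of any point of $A$ belongs to $O^-(x)$. Summing the identity $\sum_{z\in f^{-1}(y)}i(z,f)=d:=\deg(f)$ over $y\in A$ and using $|f^{-1}(y)|\geq 1$, I obtain $|f^{-1}(A)|=\sum_{y\in A}|f^{-1}(y)|\geq|A|$. Combined with $f^{-1}(A)\subset A$ and $|A|<\infty$, this forces $f^{-1}(A)=A$, $|f^{-1}(y)|=1$ for every $y\in A$, and $i(z,f)=d$ for the unique preimage $z$ of each $y\in A$. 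So $A$ is completely invariant, $f|_A$ is a permutation of a finite set, and $x$ is periodic. Letting $p$ be the period, the chain rule for the local index gives $i(x,f^p)=\prod_{j=0}^{p-1}i(f^j(x),f)=d^p$, which is $\geq 2$ because $d>K_I(f)\geq 1$ forces $d\geq 2$. A standard distortion estimate for a quasi\-regular branched covering fixing a point of local index $\geq 2$ (cf.~\cite{Rickman93}) then shows that $f^p$ maps a sufficiently small chordal ball around $x$ strictly into itself, so $x$ is attracting in the chordal sense used in the excerpt.

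The last assertion of the theorem would then follow from Theorem~\ref{thm3}: each $x\in E(f)$ lies in the open basin $A(x)$, which is disjoint from $J(f)$. For the finiteness I would first observe that distinct attracting cycles in $E(f)$ have disjoint basins, and that $A(\xi)\cap E(f)$ contains only the cycle of $\xi$ itself: any other periodic point in $A(\xi)$ would fail to be attracted to $\xi$ under iteration of $f^p$. Hence $E(f)$ is discrete in $\RR$. To rule out accumulation points I would invoke Theorem~\ref{thm1}: pick $x_0\in J(f)$ and a small neighborhood $U$ of $x_0$. For any $y\in E(f)\setminus(\{x_0\}\cup O^+(x_0))$ the finite set $O^-(y)\cup\{y\}$ avoids $x_0$, so for $U$ small enough it is disjoint from $O^-(y)\cup\{y\}$, whence $y\notin O^+(U)$ and therefore $y$ lies in the capacity-zero set $\RR\setminus O^+(U)$.

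The main obstacle is turning this last observation—which a priori places each $y\in E(f)$ in a capacity-zero set depending on $y$—into an honest uniform bound on the cardinality of $E(f)$. I expect to do this by exploiting the rigid structure uncovered in the first step: every $y\in E(f)$ is a totally branched point with $i(y,f)=d$ whose unique preimage also lies in $E(f)$, and the hypothesis $d>K_I(f)$ should, via a Rickman-type counting argument combined with the capacity estimate above, limit the number of such totally branched points to a finite number.
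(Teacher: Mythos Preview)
Your first paragraph---the preimage-counting to obtain periodicity and $i(x,f^p)=d^p$, followed by the distortion estimate to obtain attraction---is correct and is essentially the paper's argument. The ``standard distortion estimate'' you cite is Lemma~\ref{lemma-m70} (in its uniform form, Lemma~\ref{lemma-m70a}): since $i(x,f^p)=d^p>K_I(f)^p\geq K_I(f^p)$, the H\"older exponent $\mu=(i(x,f^p)/K_I(f^p))^{1/(n-1)}$ exceeds~$1$, whence $\chi(f^p(y),x)\leq B\,\chi(y,x)^\mu\leq\tfrac12\chi(y,x)$ for $\chi(y,x)$ small enough, and $x$ is attracting. Deducing $E(f)\cap J(f)=\emptyset$ from Theorem~\ref{thm3} is also fine.

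The genuine gap is your finiteness argument. The capacity route cannot succeed in principle: sets of capacity zero may be countably infinite, so even if you managed to place all of $E(f)$ inside a \emph{single} set $\RR\setminus O^+(U)$ of capacity zero---which your argument does not, because the neighborhood $U$ you need depends on the point $y\in E(f)$---this would not bound $\card E(f)$. The idea you are missing is already implicit in your first paragraph. You showed that every point of $E(f)$ lies in $B_f^*=\{z:i(z,f)=d\}$, and the whole point of Lemma~\ref{lemma-m70a} is that the constants $B,r$ in the distortion inequality are \emph{uniform} over the compact set~$B_f^*$. Hence there is a single $\delta>0$, depending only on~$f$, such that $\chi(f(u),f(v))\leq\tfrac12\chi(u,v)$ whenever $u\in B_f^*$ and $\chi(u,v)\leq\delta$. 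If $x,x'\in E(f)$ are distinct with $\chi(x,x')\leq\delta$, then since the entire orbit of $x$ lies in $E(f)\subset B_f^*$ one may iterate to obtain $\chi(f^k(x),f^k(x'))\leq 2^{-k}\chi(x,x')\to 0$; periodicity of both points then forces $x=x'$, a contradiction. Thus distinct points of $E(f)$ are $\delta$-separated in the compact space~$\RR$, and $E(f)$ is finite. No appeal to Theorem~\ref{thm1} or to any further counting argument is needed.
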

This result is standard for rational functions;
see \cite[Section~4.1]{Beardon91},
\cite[Lemma 4.9]{Milnor06}
 or \cite[Section~31]{Steinmetz93}.
For uniformly quasi\-regular maps it
can be found in, e.g., \cite[pp.~64--65]{Siebert04}.

Quasiregular maps are H\"older continuous.
For the  analogs of some further key results of complex dynamics
we require the stronger hypothesis of Lipschitz continuity.
This condition is satisfied
for many examples of uniformly quasi\-regular maps. 
We  also note that uniformly quasi\-regular maps are 
Lipschitz continuous at fixed points~\cite[Lemma~4.1]{Hinkkanen04}.
\begin{theorem}\label{thm5}
Let $f:\RR\to\RR$ be quasi\-regular with $\deg(f)>K_I(f)$.
Suppose that $f$ is Lipschitz continuous.
If $U$ is  an open set intersecting $J(f)$,
then
$O^+(U)\supset \RR\setminus E(f)$
and $O^+(U\cap J(f))=J(f)$.
\end{theorem}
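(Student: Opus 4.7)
The plan is to establish the inclusion $O^+(U) \supset \RR \setminus E(f)$ first and then deduce $O^+(U \cap J(f)) = J(f)$ from it. For the deduction: the inclusion $\subset$ comes from forward invariance of $J(f)$ (from the complete invariance noted after Definition~\ref{defJ}). For $\supset$, take $y \in J(f)$; Theorem~\ref{thm4} gives $y \notin E(f)$, so the first inclusion yields $y = f^k(z)$ with $z \in U$ for some $k$, and complete invariance applied to $f^k(z) \in J(f)$ forces $z \in J(f)$, whence $z \in U \cap J(f)$ and $y \in O^+(U \cap J(f))$.

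For the main inclusion, I would fix $x \in U \cap J(f)$ and set $C := \RR \setminus O^+(U)$; by Definition~\ref{defJ}, $\capacity C = 0$. Openness of the quasiregular map $f$ makes $O^+(U) = \bigcup_{j} f^j(U)$ open and hence $C$ closed, and $f(O^+(U)) \subset O^+(U)$ dualises to the backward invariance $f^{-1}(C) \subset C$, so $O^-(y) \subset C$ for every $y \in C$. The desired conclusion is thus equivalent to $C \subset E(f)$, that is, every $y \in C$ has finite backward orbit.

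Arguing by contradiction, suppose some $y \in C$ has $O^-(y)$ infinite. By compactness of $\RR$, there are distinct $z_k \in f^{-n_k}(y)$ with $z_k \to z \in C$ and $n_k \to \infty$. Lipschitz continuity with some constant $L$ gives
\[
\chi\!\left(f^{n_k}(z), y\right) = \chi\!\left(f^{n_k}(z), f^{n_k}(z_k)\right) \le L^{n_k}\chi(z, z_k),
\]
and the aim is to combine this bound with the preimage growth supplied by $\deg(f) > K_I(f)$ --- which passes to iterates since $\deg(f^n) > K_I(f^n)$, via Rickman-type value-distribution estimates (cf.\ \cite[Section~III.2]{Rickman93}) --- to produce invariant geometric structure inside the capacity-zero set $C$. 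Propagating the accumulation analysis along the orbit $f^j(z)$, each of which is itself a limit of iterated preimages of $y$, should generate enough such structure to contradict the total disconnectedness of capacity-zero sets \cite[Corollary~III.2.5]{Rickman93}.

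The main obstacle is that capacity-zero sets can be infinite, so the infiniteness of $O^-(y) \subset C$ does not contradict $\capacity C = 0$ by itself. In the uniformly quasiregular setting, Miniowitz's extension of Montel's theorem upgrades the capacity-zero conclusion directly to finiteness and bypasses this entirely; without uniformity one must exploit Lipschitz continuity essentially. Converting the combinatorial preimage growth from $\deg(f) > K_I(f)$, against the Lipschitz expansion bound $L^n$ on $f^n$, into enough geometric concentration near $z$ to force $C \subset E(f)$ is the quantitative step where the bulk of the work lies, and it is the only place where the Lipschitz hypothesis is used essentially; the remainder of the argument mirrors the classical rational case (cf.\ \cite[Section~4.1]{Beardon91}).
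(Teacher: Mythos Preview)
Your reduction to showing $C:=\RR\setminus O^+(U)\subset E(f)$ is correct, as is the deduction of $O^+(U\cap J(f))=J(f)$ from the first conclusion. But the ``quantitative step'' you flag is not merely where the bulk of the work lies---you have not located the mechanism at all, and the direction you suggest does not lead anywhere. The Lipschitz bound $\chi(f^{n_k}(z),y)\leq L^{n_k}\chi(z,z_k)$ is useless here: it only says that forward iterates of nearby points stay controlled, and does not produce any geometric obstruction inside a capacity-zero set. Total disconnectedness is far too weak a property to contradict; capacity-zero sets can be uncountable and perfect.

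What the paper actually does is this. Take $z\notin O^+(U)\cup E(f)$ and let $X$ be the set of limit points of $O^-(z)$: this is a nonempty, compact, \emph{completely invariant} subset of $C$, disjoint from $E(f)$. One then shows that some iterate $f^m$ has, for every $x\in X$, at least two preimages in $X$ at chordal distance~$\geq\delta$. The argument for this reuses the $B_f^*$ contraction from the proof of Theorem~\ref{thm4}: if $f^{-k}(x)$ were always a singleton then all backward iterates would lie in $B_f^*$, and the superattracting behaviour there forces them to converge to an exceptional periodic point, contradicting $X\cap E(f)=\emptyset$. Compactness of $X$ and openness of $f$ then yield uniform $m$ and~$\delta$. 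Now Lipschitz continuity enters in the \emph{correct} direction: since $f^m$ is Lipschitz with some constant~$L$ and every point of $X$ has two $f^m$-preimages separated by~$\delta$, a standard mass-distribution argument (Corollary~\ref{corHh1}) gives $\dim X\geq(\log 2)/(\log L)>0$, whence $\capacity X>0$ by Lemma~\ref{cap-dim}. This contradicts $X\subset C$ and $\capacity C=0$. The Lipschitz hypothesis is used to bound how fast mass can spread under $f$, which is what yields the dimension lower bound---not to bound forward orbits of individual points as in your displayed inequality.
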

\begin{theorem}\label{thm7}
Let $f$ be as in Theorem~\ref{thm5}.
Then $J(f)=\overline{O^-(x)}$ for every $x\in J(f)$
and
$J(f)\subset\overline{O^-(x)}$ for every $x\in \RR\setminus E(f)$.
\end{theorem}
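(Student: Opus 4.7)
The plan is to deduce both statements directly from Theorem~\ref{thm5}, together with the already-noted complete invariance of $J(f)$ and Theorem~\ref{thm4}. The structure is exactly the classical one: Theorem~\ref{thm5} plays the role of the ``expansion/covering'' property that in the rational setting comes from Montel's theorem, so this final theorem becomes a short corollary.

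First I would prove the second assertion. Fix $x\in\RR\setminus E(f)$ and $y\in J(f)$. For an arbitrary neighborhood $U$ of $y$ we have $U\cap J(f)\neq\emptyset$ (since $y\in J(f)$), so Theorem~\ref{thm5} gives $O^+(U)\supset \RR\setminus E(f)\ni x$. Thus there exist $z\in U$ and $k\in\N$ with $f^k(z)=x$, i.e.\ $z\in f^{-k}(x)\subset O^-(x)$. Hence every neighborhood of $y$ meets $O^-(x)$, proving $y\in\overline{O^-(x)}$ and therefore $J(f)\subset\overline{O^-(x)}$.

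For the first assertion, let $x\in J(f)$. By Theorem~\ref{thm4} we have $x\notin E(f)$, so the inclusion just proved yields $J(f)\subset\overline{O^-(x)}$. Conversely, $J(f)$ is closed and completely invariant (as noted after Definition~\ref{defJ}), so complete invariance applied repeatedly to $x\in J(f)$ gives $f^{-j}(x)\subset J(f)$ for every $j\in\N$; taking the union and closure yields $\overline{O^-(x)}\subset J(f)$. Equality follows.

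The only step that requires any care is the observation that Theorem~\ref{thm5} really is applicable to an \emph{arbitrary} neighborhood of a point of $J(f)$, but this is immediate because such a neighborhood automatically meets $J(f)$. There is no genuine obstacle here; the content of the theorem is entirely supplied by Theorems~\ref{thm4} and~\ref{thm5}, which is why it is stated last.
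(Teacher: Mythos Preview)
Your proof is correct and follows essentially the same route as the paper: both deduce $J(f)\subset\overline{O^{-}(x)}$ from Theorem~\ref{thm5} (the paper phrases this step by contradiction, you phrase it directly, but it is the same argument), and both obtain the reverse inclusion for $x\in J(f)$ from Theorem~\ref{thm4} together with the fact that $J(f)$ is closed and completely invariant.
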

Theorems~\ref{thm5} and~\ref{thm7} 
are well-known for rational functions;
see~\cite[Theorems~4.2.5 and 4.2.7]{Beardon91},
\cite[Theorem~4.10 and Corollary~4.13]{Milnor06}
or~\cite[Sections 28 and 32]{Steinmetz93}.
For uniformly quasi\-regular maps these results are -- as already 
mentioned -- consequences of Miniowitz's theorem and
can be found in, e.g., \cite[Section~3]{Hinkkanen04}.

We denote the Hausdorff dimension of a subset  $A$ of
$\R^n$ by $\dim A$.
\begin{theorem}\label{thm6}
Let $f$ be as in Theorem~\ref{thm5}.
Then $\dim J(f)>0$.
\end{theorem}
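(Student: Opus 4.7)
The plan is to assume $\dim J(f)=0$ and derive a contradiction through a mass distribution argument applied to a natural $f$-invariant probability measure on $J(f)$. Write $d=\deg(f)$ and let $L$ denote the Lipschitz constant of $f$ in the chordal metric~$\chi$. Then $d\ge 2$, since $d>K_I(f)\ge 1$; and $L>1$, because an $L$-Lipschitz map satisfies $|J_f|\le L^n$ almost everywhere, while the quasi-regular area formula $\int_{\RR}|J_f|\,dV=d\,|\RR|$ forces $L^n\ge d$.

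Fix $x_0\in J(f)$ and let $\mu$ be any weak-$\ast$ subsequential limit of the probability measures
\[
\mu_N=\frac{1}{d^N}\sum_{y\in f^{-N}(x_0)}\delta_y,
\]
preimages being counted with local multiplicity. By Theorem~\ref{thm7}, $\supp\mu\subseteq\overline{O^-(x_0)}=J(f)$, and a pushforward computation gives $f_\ast\mu=\mu$. Choose $y_0\in J(f)\setminus f(B_f)$, where $B_f$ is the branch set of $f$; this is possible because $f(B_f)$ has Hausdorff dimension at most $n-2$, while $J(f)$ is uncountable by Theorems~\ref{thm4} and~\ref{thm7}. For $r$ small, $f^{-1}(B(y_0,r))$ splits into $d$ pairwise disjoint components $V_1,\dots,V_d$ about the preimages $w_1,\dots,w_d$ of $y_0$, and the $L$-Lipschitz hypothesis gives $B(w_i,r/L)\subseteq V_i$. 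Iterating yields, for each $N$, a family $\{V_\alpha\}_{|\alpha|=N}$ of preimage components with $B(w_\alpha,r/L^N)\subseteq V_\alpha$ and
\[
\sum_{|\alpha|=N}\mu(V_\alpha)=\mu(B(y_0,r))
\]
by repeated use of $f$-invariance. Since this sum has $d^N$ terms, a Markov / Borel--Cantelli argument yields, for $\mu$-a.e.\ $y\in J(f)$, a subsequence of scales $\rho_k=r/L^{N_k}\to 0$ along which $\mu(B(y,\rho_k))\le C\rho_k^{\log d/\log L}$. Hence $\mu$ has upper local dimension at least $\log d/\log L>0$ $\mu$-almost everywhere, and Frostman's mass distribution principle gives $\dim J(f)\ge\dim\supp\mu\ge\log d/\log L$, contradicting the assumption.

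The main obstacle is maintaining the clean preimage splitting throughout the iteration: each component $V_\alpha$ must avoid $f(B_f)$ so that the subsequent splitting at that node is valid, and the ball $B(w_\alpha,r/L^N)$ must genuinely lie inside $V_\alpha$. Since $\dim f(B_f)\le n-2<n$ and $\mu$ is $f$-invariant, $\mu(f(B_f))=0$, so the set of ``bad'' starting indices is $\mu$-null and the recursive bound holds on a $\mu$-full subset of $J(f)$. A secondary technical point is the justification of the invariance $f_\ast\mu=\mu$ for the weak-$\ast$ limit~$\mu$ in the merely quasi-regular setting, which reduces via the area formula to the exact identity $f_\ast\mu_N=\mu_{N-1}$ and the stability of invariance under weak-$\ast$ limits.
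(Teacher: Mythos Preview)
Your approach has a genuine gap in the handling of the branch set. For the iterated splitting of $f^{-N}(B(y_0,r))$ into $d^N$ components you need $y_0\notin\bigcup_{k\ge 1} f^k(B_f)$, not merely $y_0\notin f(B_f)$. Your attempted fix---that $\mu(f(B_f))=0$ because $\dim f(B_f)\le n-2$ and $\mu$ is $f$-invariant---fails on both counts: for quasiregular maps one only knows that the \emph{topological} dimension of $B_f$ is at most $n-2$, while its Hausdorff dimension can exceed this; and in any case low Hausdorff dimension of a set implies nothing about its mass under a singular measure (indeed you are assuming $\dim J(f)=0$, so $\mu$ already lives on a zero-dimensional set and could perfectly well charge $f(B_f)\cap J(f)$). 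There are secondary issues as well: the identity $f_*\mu=\mu$ does not follow from $f_*\mu_N=\mu_{N-1}$ along an arbitrary weak-$*$ convergent subsequence without, say, Ces\`aro averaging; and the ``Markov/Borel--Cantelli'' step is not a proof---the balls $B(w_\alpha,r/L^N)$ on which you have mass bounds are centered on the countable set $O^-(y_0)$, and you give no mechanism to transfer these to balls centered at $\mu$-generic points of~$J(f)$.

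The paper avoids all of this by not demanding $d$ separated preimages. Reusing the argument from the proof of Theorem~\ref{thm5} (which hinges on Lemma~\ref{lemma-m70a} concerning the set $B_f^*=\{x:i(x,f)=\deg(f)\}$), it shows that there exist $m\in\N$ and $\delta>0$ such that every $x\in J(f)$ has at least \emph{two} points of $f^{-m}(x)$ at chordal distance $\ge\delta$. Since $f^m$ is Lipschitz, Corollary~\ref{corHh1} (proved via the self-contained mass-distribution argument of Theorem~\ref{Hhinv}) then gives $\dim J(f)\ge\log 2/(m\log L)>0$ directly. The key idea you are missing is precisely how to obtain two uniformly separated preimages over all of $J(f)$; this is nontrivial and uses the $B_f^*$ machinery rather than any generic avoidance of the postcritical set.
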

For rational functions 
Theorem~\ref{thm6} is 
due to Garber; see~\cite{Garber}, 
\cite[Section~10.3]{Beardon91} or \cite[Section~168]{Steinmetz93}.
For uniformly quasi\-regular maps it was recently proved
by Fletcher and Nicks~\cite{FletcherNicks11}.

We conjecture that the hypothesis that $f$ is Lipschitz continuous
can be omitted in Theorems~\ref{thm5} and~\ref{thm7},
but not in Theorem~\ref{thm6}.
However, we conjecture that without this hypothesis
we still have $\capacity J(f)>0$.
We prove that this is the case under an additional assumption
involving the \emph{branch set} $B_f$ which is defined as 
the set of all points where $f$ is not locally injective;
cf.~section~\ref{quasi}.
\begin{theorem}\label{thm8}
Let $f:\RR\to\RR$ be quasi\-regular with $\deg(f)>K_I(f)$.
Suppose that $J(f)\cap B_f=\emptyset$. Then $\capacity J(f)>0$.
\end{theorem}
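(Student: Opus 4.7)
\emph{Proof plan.}
The plan is to argue by contradiction. Suppose $\capacity J(f) = 0$ and derive a contradiction from $\deg(f) > K_I(f)$.

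First, exploit $J(f) \cap B_f = \emptyset$ to obtain a branch-free neighborhood. Since $B_f$ is closed (local injectivity being an open condition) and $J(f)$ is closed, and both sit in the compact space $\RR$, choose an open neighborhood $W$ of $J(f)$ with $\overline{W} \cap B_f = \emptyset$; thus $f|_W$ is a local homeomorphism. Complete invariance of $J(f)$ together with $J(f) \cap B_f = \emptyset$ implies that every iterate $f^n$ is locally injective at each point of $J(f)$, and for every $y \in J(f)$ the set $f^{-n}(y)$ consists of exactly $d^n$ distinct points of $J(f)$, where $d := \deg(f)$. A continuity/compactness argument gives a smaller neighborhood $W_0$ of $J(f)$ with $\overline{W_0} \subset W$ and $f^{-1}(W_0) \cap W \subset W_0$, so that preimage iterates of $W_0$ remain trapped in $W_0$.

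Next, fix $x_0 \in J(f)$ and an open neighborhood $U \subset W_0$ of $x_0$, small enough that for each $n$ the $d^n$ components $A_n^{(1)}, \ldots, A_n^{(d^n)}$ of $f^{-n}(U) \cap W_0$ containing preimages of $x_0$ are pairwise disjoint, and each $f^n|_{A_n^{(i)}}$ is a $K_I(f)^n$-quasiconformal homeomorphism onto $U$ (automatic since these sheets avoid $B_{f^n}$). Fix a closed ball $C \subset U$ with $\capacity(U, C) > 0$ and set $C_n^{(i)} := (f^n|_{A_n^{(i)}})^{-1}(C)$. The quasiconformal distortion of capacity gives $\capacity(A_n^{(i)}, C_n^{(i)}) \ge K_I(f)^{-n} \capacity(U, C)$ for every $i$, and additivity of capacity over disjoint opens yields
\[
\capacity\!\left(\bigsqcup_{i=1}^{d^n} A_n^{(i)},\,\bigsqcup_{i=1}^{d^n} C_n^{(i)}\right) = \sum_{i=1}^{d^n}\capacity(A_n^{(i)}, C_n^{(i)}) \ge \left(\frac{d}{K_I(f)}\right)^{\!n}\capacity(U, C),
\]
which tends to $+\infty$ as $n \to \infty$ since $d > K_I(f)$.

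The contradiction must come from pairing this unbounded growth with a uniform-in-$n$ upper bound on the same quantity. The plan is to combine the Poletsky/KO modulus inequality for $f^n$ with the assumption $\capacity J(f) = 0$: because $\bigsqcup_i C_n^{(i)} \subset \overline{W_0}$ and the trapping property together with the expansion underlying Theorem~\ref{thm1} forces $\diam(A_n^{(i)}) \to 0$, the inner sets concentrate on $J(f)$ as $n \to \infty$, so a M\"obius-invariant semicontinuity statement combined with $\capacity(W,J(f)) = 0$ should furnish the required finite upper bound. The main obstacle is that summing many disjoint condensers in a bounded region can in principle produce arbitrarily large total capacity, so a naive comparison with $\capacity(W, \overline{W_0})$ fails. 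Making the upper bound rigorous requires the dynamical fact that the inner components collapse onto the capacity-zero invariant set $J(f)$, together with a quantitative form of the modulus inequality for $f^n$ that uses the sharp inequality $\deg(f) > K_I(f)$ to prevent the component multiplicity $d^n$ from overwhelming the capacity-zero concentration—that balance is the technical heart of the proof.
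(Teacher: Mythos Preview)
Your argument has a genuine gap at precisely the point you yourself flag: the contradiction never closes because you produce no upper bound for $\sum_{i=1}^{d^n}\capacity(A_n^{(i)},C_n^{(i)})$. The lower bound $(d/K_I(f))^n\capacity(U,C_0)$ is fine, but the hoped-for upper bound from ``concentration on a capacity-zero set'' does not exist. A sum of capacities of disjoint condensers in a bounded region can diverge even when the inner sets collapse to a single point: for instance, scaled copies of a fixed annular condenser placed in disjoint dyadic shells about the origin each have the same positive capacity, so the sum is infinite while the inner sets shrink to~$0$. Thus the fact that the $C_n^{(i)}$ accumulate on $J(f)$ with $\capacity J(f)=0$ gives you nothing. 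The $K_O$-inequality does not help either: applying Lemma~\ref{mcfKO} to $f^n$ on the union condenser yields at best an upper bound of order $d^n K_O(f)^n\capacity(U,C_0)$, which grows faster than your lower bound. In short, the ``technical heart of the proof'' that you defer is not a detail but the entire argument, and the route you sketch does not supply it.

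The paper proceeds quite differently and never argues by contradiction. From $J(f)\cap B_f=\emptyset$ and compactness it extracts a uniform $\delta>0$ such that every $x\in J(f)$ has $d=\deg(f)$ preimages in $J(f)$ pairwise at chordal distance at least~$\delta$. Since $f$ is globally H\"older with exponent $\alpha=K_I(f)^{1/(1-n)}$, Corollary~\ref{corHh2} (proved via the mass distribution principle, Theorem~\ref{Hhinv}) gives $H_h(J(f))>0$ for the gauge $h(t)=(\log(1/t))^{(\log d)/(\log\alpha)}=(\log(1/t))^{(1-n)\log d/\log K_I(f)}$. The hypothesis $d>K_I(f)$ makes this exponent equal to $1-n-\varepsilon$ for some $\varepsilon>0$, and Lemma~\ref{cap-haus} then yields $\capacity J(f)>0$ directly. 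The key idea you are missing is to build a measure on $J(f)$ by pulling back Dirac masses along the $d$-to-$1$ inverse branches and to control ball masses via the H\"older modulus; this replaces the unworkable global capacity comparison by a local, measure-theoretic estimate.
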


This paper is organized as follows.
In section~\ref{quasi} we 
recall the definition of quasi\-regular 
maps and in section~\ref{averages} we state some results about
averages of counting functions which play a key role
in the proof of Theorem~\ref{thm1}.
 These results will be proved in section~\ref{mr},
using some lemmas about the capacities of condensers
and the moduli of path families given
before
in section~\ref{moduli}. Theorems~\ref{thm1}--\ref{thm3}
are then proved in section~\ref{proofthm1} and
Theorem~\ref{thm4} is proved in section~\ref{proofthm4}.
In section~\ref{hausdorffinv} we obtain some results
about the Hausdorff measure of invariant sets and use them
in section~\ref{proofthm5} to prove Theorems~\ref{thm5}--\ref{thm8}.
In section~\ref{locdist} we prove a result about the local
distortion of quasi\-regular maps which generalizes a result
used in the proof of Theorem~\ref{thm4} and which may be of
independent interest.
In section~\ref{conj} we give some evidence for the conjecture
made above that 
Theorems~\ref{thm5} and~\ref{thm7} hold without
the hypothesis of Lipschitz continuity.
We also show that the conclusion of these theorems holds 
under some different hypothesis.

\section{Quasiregular maps}\label{quasi}
We denote the (Euclidean) norm of  a point $x\in \R^n$ by~$|x|$.
For $a\in\R^n$ and $r>0$ let $B(a,r):=\{x\in\R^n:|x-a|<r\}$
be the open ball,
$\overline{B}(a,r)$ the closed ball and $S(a,r)=\partial
B(a,r)$ the sphere
 of radius $r$ centered at~$a$.
We write $B(r)$, $\overline{B}(r)$ and $S(r)$
 instead of $B(0,r)$, $\overline{B}(0,r)$
and $S(0,r)$.
Sometimes we will emphasize the dimension by writing
$B^n(a,r)$, $S^{n-1}(a,r)=\partial B^n(a,r)$, etc.
With the stereographic projection $\pi:S^n(1)\to\RR$ the
chordal metric $\chi$
already mentioned
is given by $\chi(x,y)=|\pi^{-1}(x)-\pi^{-1}(y)|$.
(Instead of the chordal metric, 
one could also use the spherical metric.)
Balls with respect to the 
chordal metric are denoted by a subscript~$\chi$; that is,
$B_\chi(a,r):=\{x\in\RR:\chi(x,a)<r\}$.

We recall the definition of quasi\-regularity; see
Rickman's monograph~\cite{Rickman93} for 
more details.
Let $n\geq 2$ and let $\Omega\subset \R^n$ be a domain.
For $1\leq p<\infty$ the Sobolev space  $W^1_{p,\text{loc}}(\Omega)$ is defined
as the set of 
functions  $f=(f_1,\dots,f_n):\Omega\to \R^n$ for which
all first order weak partial derivatives
$\partial_k f_j$ exist and are locally in $L^p$.
It turns out that a continuous map
$f$ is in $W^1_{p,\text{loc}}(\Omega)$ if and only if
all $f_j$ are absolutely continuous on almost all lines
parallel to the coordinate axes, with all partial
derivatives locally $L^p$-integrable.
For us only the case $p=n$ will be of interest.

A~continuous map
$f\in W^1_{n,\text{loc}}(\Omega)$
is called \emph{quasi\-regular}
if there exists
a constant $K_O\geq 1$ such that
\begin{equation}\label{1a}
|Df(x)|^n\leq K_O J_f(x)
\quad \mbox{ a.e.},
\end{equation}
where $Df(x)$ denotes the derivative,
\[
|Df(x)|:=\sup_{|h|=1} |Df(x)(h)|
\]
its norm, and $J_f(x)$ the Jacobian determinant.
With
\[
\ell(Df(x)):=\inf_{|h|=1}|Df(x)(h)|
\]
the condition that~\eqref{1a} holds
for some $K_O\geq 1$ is equivalent to the condition that
\begin{equation}\label{1b}
J_f(x)\leq K_I\ell(Df(x))
\quad \mbox{ a.e.},
\end{equation}
for some $K_I\geq 1$. The smallest constants
$K_O$ and $K_I$ for which~\eqref{1a} and~\eqref{1b} hold are
called the \emph{outer and inner dilatation} of $f$ and
and denoted by $K_O(f)$ and $K_I(f)$.
Moreover,
$K(f):=\max\{K_I(f),K_O(f)\}$ is called the (maximal) \emph{dilatation} of~$f$.
We say that $f$ is $K$-\emph{quasi\-regular} if $K(f)\leq K$.

If $f$ and $g$ are quasi\-regular, with $f$ defined in the 
range of~$g$, then
$f\circ g$ is also quasi\-regular and~\cite[Theorem~II.6.8]{Rickman93}
\begin{equation}\label{KIfg}
K_I(f\circ g)\leq K_I(f)K_I(g)
\quad\text{and}\quad
K_O(f\circ g)\leq K_O(f)K_O(g)
\end{equation}
so that $K(f\circ g)\leq K(f)K(g)$.

As already mentioned, many properties  of holomorphic  functions
carry over to quasi\-regular maps. Here we only note that 
non-constant quasi\-regular maps are open and discrete.
We refer to the monographs~\cite{Reshetnyak,Rickman93} for
a detailed treatment of quasi\-regular maps.

The \emph{local index} $i(x,f)$ 
of a quasi\-regular map $f:\Omega\to\R^n$ at a point $x\in\Omega$ 
is defined by
$$i(x,f) :=\inf_U \sup_{y\in\R^n}\card f^{-1}(y),$$ 
where the infimum is taken over all neighborhoods $U\subset 
\Omega$ of~$x$.
We thus have $i(x,f)=1$ if and only if $f$ is injective in
a neighborhood of~$x$.
The \emph{branch set} $B_f$ already mentioned in the introduction
consists of all $x\in \Omega$ for which $i(x,f)\geq 2$.

Quasiregularity can be defined more generally for maps between
Riemannian manifolds. Here we consider only the case that
the domain or range are equal to (or contained in)
$\RR$.
It turns out that for a domain $\Omega\subset\overline{\R^n}$
a non-constant continuous map $f:\Omega \to \overline{\R^n}$
is quasi\-regular if $f^{-1}(\infty)$ is discrete and
if $f$ is quasi\-regular
in $\Omega\setminus (f^{-1}(\infty)\cup\{\infty\} )$.

\section{Averages of counting functions}\label{averages}
For a quasi\-regular map $f:\Omega\to \RR$, a compact
subset $E$ of $\Omega$  and $y\in\RR$ we 
denote by $n(E,y)$ the number of $y$-points of $f$ in~$E$,
counted according to multiplicity.
Thus
$$
n(E,y)=\sum_{x\in f^{-1}(y) \cap E} i(x,f).
$$
We will 
consider
the average value of $n(E,y)$ over a sphere $S(z,t)$ 
and denote this by $\nu(E,S(z,t))$. 
Denoting the normalized
$d$-dimensional Hausdorff measure by $H^d$
and putting 
$\omega_{d}=H^{d}(S^{d}(1))$ 
for $d\in \N$ we 
thus have
$$
\nu(E,S(z,t))=\frac{1}{\omega_{n-1}t^{n-1}}\int_{S(z,t)} n(E,y)dH^{n-1}(y).
$$
We will mainly be concerned with the case that $E=\overline{B}(r)$.
In this case we use the notation $n(r,y)$ and $\nu(r,S(z,t))$ 
instead of $n\!\left(\overline{B}(r),y\right)$ and 
$\nu\!\left(\overline{B}(r),S(z,t)\right)$.

The following result 
is obtained by careful inspection and suitable modification
of a result of Mattila and Rickman~\cite[Lemma~3.3]{Mattila79}.
We shall give the proof in section~\ref{mr}.
\begin{theorem}\label{thm-mr}
There exists a constant $C$ depending only on the dimension $n$ 
such that if 
$F\subset B^n(z,t/2)$ is a compact set of positive capacity,
$\theta>1$ and 
$f:B^n(\theta r)\to \RR\setminus F$
is quasi\-regular, then
\begin{equation}\label{nuSzt}
\nu(r,S(z,t))\leq 
C\frac{K_I(f)}{(\log\theta)^{n-1}
\capacity\!\left(B^n(t),F\right)}.
\end{equation}
\end{theorem}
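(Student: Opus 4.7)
The plan is to follow the Mattila--Rickman template of \cite{Mattila79}, with the mild adaptations needed to handle an arbitrary compact $F$ of positive capacity and to track $K_I(f)$ rather than $K(f)$. The backbone is the classical equivalence between the $n$-capacity of a condenser and the $n$-modulus of the associated path family: if $\Gamma^*$ denotes the family of rectifiable paths in $B^n(t)$ joining $F$ to $S(z,t)$, then $\capacity(B^n(t),F)=M(\Gamma^*)$ (see \cite[Section~II.10]{Rickman93}). Hence \eqref{nuSzt} will follow once I establish
$$\nu(r,S(z,t))\cdot M(\Gamma^*)\leq C\,K_I(f)\,(\log\theta)^{1-n}.$$

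The second ingredient is the spherical ring in the source: the family $\Gamma_0$ of paths in $B(\theta r)\setminus\overline{B}(r)$ joining $S(r)$ to $S(\theta r)$ satisfies $M(\Gamma_0)=\omega_{n-1}(\log\theta)^{1-n}$, which supplies the $(\log\theta)^{1-n}$ factor, via the radial admissible density on the annulus. To bridge the two, I would use Poletsky--Rickman path lifting. For $y\in S(z,t)\setminus F$, enumerate the preimages $x_1,\dots,x_k\in\overline{B}(r)$ with local indices $i(x_j,f)$ summing to $n(r,y)$. Any path $\beta\in\Gamma^*$ starting at $y$ and terminating in $F$ admits $n(r,y)$ essentially disjoint maximal lifts beginning at the $x_j$. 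Because $f(B(\theta r))\cap F=\emptyset$, none of these lifts can reach $f^{-1}(F)$; each therefore exits $B(\theta r)$ and contains a subarc belonging to $\Gamma_0$. In this way every $y\in S(z,t)$ contributes $n(r,y)$ distinct subpaths to $\Gamma_0$.

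Assembling this with the $K_I$-inequality for the modulus pullback and integrating over $y\in S(z,t)$ (a Fubini-type argument mirroring \cite[Lemma~3.3]{Mattila79}) gives
$$\left(\int_{S(z,t)}n(r,y)\,dH^{n-1}(y)\right)\cdot M(\Gamma^*)\leq C'\,K_I(f)\,t^{n-1}\,M(\Gamma_0),$$
which, after dividing by $\omega_{n-1}t^{n-1}$ and invoking the capacity--modulus identification, yields \eqref{nuSzt}. The main obstacle is the path-lifting bookkeeping: one needs a near-extremal admissible density $\rho$ for $\Gamma^*$ and must verify that its pullback through $f$, weighted by the local indices along the constructed lifts, is admissible for a path family in the annulus whose modulus is controlled by $M(\Gamma_0)$, losing only the factor $K_I(f)$ dictated by Poletsky--Rickman. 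Once this is set up, the dimensional constant $C$ comes purely from $\omega_{n-1}$ and other geometric factors, independent of $f$ and $F$.
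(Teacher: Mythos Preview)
Your setup is on the right track: the path-lifting step, V\"ais\"al\"a's $K_I$-inequality, and the annulus modulus $\omega_{n-1}(\log\theta)^{1-n}$ are exactly the ingredients the paper uses. But the ``Fubini-type'' passage from the pointwise lifting count $n(r,y)$ to the integrated quantity $\nu(r,S(z,t))$ is where the sketch breaks down. V\"ais\"al\"a's inequality (Lemma~\ref{mcf3}) requires a \emph{uniform} lower bound $m$ on the number of lifts; it yields $M(\Gamma)\leq\frac{K_I(f)}{m}M(\Gamma_0)$ only for the subfamily $\Gamma$ of target paths whose sphere-endpoint lies in $\{y:n(r,y)\geq m\}$. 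There is no mechanism for averaging $m$ over $S(z,t)$ inside the modulus inequality, and the pullback you describe (near-extremal $\rho$ for the target family, pulled back to the source) runs in the $K_O$-direction, not the $K_I$-direction. The displayed product inequality $\bigl(\int_{S(z,t)}n(r,y)\,dH^{n-1}\bigr)\cdot M(\Gamma^*)\leq C'K_I(f)\,t^{n-1}\,M(\Gamma_0)$ therefore does not follow from the bookkeeping you outline.

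The paper's proof---and this \emph{is} the structure of Mattila--Rickman's Lemma~3.3, not a Fubini argument---handles the variable multiplicity by stratifying into level sets $B_k=\{y\in S(z,t):n(r,y)\geq k\}$. V\"ais\"al\"a applied to $\Gamma_k=\Delta(F,B_k';B(t))$ gives $M(\Gamma_k)\leq\frac{K_I(f)}{k}\omega_{n-1}(\log\theta)^{1-n}$, and then two genuinely geometric lemmas do the real work: Lemma~\ref{mcf4} (a three-family comparison) bounds $M(\Gamma_k)$ from below by the minimum of $\capacity(B(t),F)$, a constant, and $M(\Delta(B_k',S(t/2);\cdot))$, and Lemma~\ref{mcf5} (spherical symmetrization plus the Gr\"otzsch extremal) converts the last quantity into a lower bound involving $H^{n-1}(B_k')$. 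Together these give $H^{n-1}(B_k)\lesssim t^{n-1}\exp\bigl(-\tau(\log\theta)(k/K_I(f))^{1/(n-1)}\bigr)$ for $k$ exceeding a threshold $k_0$ of the right size, and summing $\nu(r,S(z,t))=\frac{1}{\omega_{n-1}t^{n-1}}\sum_k H^{n-1}(B_k)$ finishes. The modulus-to-measure conversion on the level sets is the heart of the argument and is absent from your proposal.
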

The average of $n(E,y)$ over $\RR$ is denoted by $A(E)$.
Identifying $\RR$ with $S^{n}(1)$ we thus have
$$
A(E)=\frac{1}{\omega_{n}}\int_{S^{n}(1)} n(E,y)dH^{n}(y).
$$
Similarly as before
we write $A(r)$ instead of $A\!\left(\overline{B}(r)\right)$, and sometimes we
include the map $f$ by writing $A(r,f)$.

It is shown in \cite[Lemma IV.1.7]{Rickman93}
that $\nu(r,S(z,t))$ and $A(r)$ are comparable in the following
sense.
\begin{lemma}\label{la-nu-A}
There exists a constant $Q$ depending only on the dimension $n$ 
such that if
$Y$ is an $(n-1)$-sphere of spherical radius $u\leq \pi/4$,
if 
$R>\theta r>r>0$ and
if $f:B^n(R)\to \RR$ is quasi\-regular, 
then
$$
\nu(r/\theta,Y)-
Q\frac{K_I(f)| \log u|^{n-1}}{(\log\theta)^{n-1}}
\leq A(r)
\leq \nu(\theta r,Y)+
Q\frac{K_I(f)  |\log u|^{n-1}}{(\log\theta)^{n-1}}.
$$
\end{lemma}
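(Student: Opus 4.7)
The plan is to realize $A(r)$ as a weighted integral of $\nu(r, Y_\varphi)$ over a one-parameter family of concentric parallels $\{Y_\varphi\}$ foliating $\RR\cong S^n(1)$, and then to perform a pointwise comparison of $\nu(r,Y_\varphi)$ with $\nu(\theta r,Y)$ (for the upper estimate) and with $\nu(r/\theta,Y)$ (for the lower one) using Theorem~\ref{thm-mr}.

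I identify $\RR$ with $S^n(1)$ and introduce spherical coordinates $(\varphi,\omega)\in[0,\pi]\times S^{n-1}$ whose pole coincides with the spherical center of $Y$, so that $Y$ is the parallel $\{\varphi=u\}$; write $Y_\varphi$ for the parallel at spherical radius $\varphi$.  Fubini's theorem applied to the defining integral of $A(r)$ represents it as the weighted average
$$
A(r)=\frac{\omega_{n-1}}{\omega_n}\int_0^\pi \sin^{n-1}(\varphi)\,\nu(r,Y_\varphi)\,d\varphi,
$$
so both claimed inequalities reduce to a comparison of $\nu(r,Y_\varphi)$ with $\nu(\theta r,Y)$ and $\nu(r/\theta,Y)$ that is uniform in $\varphi$.

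The comparison rests on the following dichotomy, valid for $\varphi$ bounded away from the poles $0$ and $\pi$: either each $y\in Y_\varphi$ with a preimage in $\overline{B}(r)$ is accounted for by a nearby $y'$-point in $\overline{B}(\theta r)$ with $y'\in Y$ (via path-lifting of the short spherical arc joining $Y_\varphi$ to $Y$), or else the restriction of $f$ to the shell $B(\theta r)\setminus\overline{B}(r)$ omits some compact $F\subset\RR$ of positive capacity separating $Y$ from $Y_\varphi$; in the second case Theorem~\ref{thm-mr} applied with this $F$ controls the excess by $CK_I(f)(\log\theta)^{1-n}/\capacity(B(t),F)$.  Taking $F$ to be a spherical cap of radius comparable to $u$ gives $\capacity(B(t),F)\gtrsim|\log u|^{1-n}$ by the standard estimates for ring-type condensers, producing the $|\log u|^{n-1}$ factor in the error term.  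Integrating this comparison against $\sin^{n-1}(\varphi)\,d\varphi$ and absorbing the polar contribution $\varphi\in[0,u)\cup(\pi-u,\pi]$, which is of size $O(u^n)$ and hence bounded, into $Q$ yields the upper estimate; the lower one is obtained by the symmetric argument with $r/\theta$ in place of $\theta r$.

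The principal obstacle is to make this dichotomy quantitative: $F$ must be chosen so that $\capacity(B(t),F)$ carries the correct logarithmic dependence on $u$, and the path-lifting identification between preimages of $Y_\varphi$ and preimages of $Y$ has to be converted into a controlled defect bound using the quasi\-regular distortion.  This is the technical core of~\cite[Lemma~IV.1.7]{Rickman93}, and following its scheme with $F$ taken as a small cap should deliver the lemma with a purely dimensional constant $Q$.
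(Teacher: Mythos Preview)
The paper does not prove this lemma; it simply cites \cite[Lemma~IV.1.7]{Rickman93}. So there is no in-paper proof to compare against, only Rickman's original argument, which you yourself acknowledge you are attempting to sketch.

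Your Fubini representation of $A(r)$ as a weighted average of $\nu(r,Y_\varphi)$ over the parallels is correct and is the natural first step. The problem is the comparison step. The dichotomy you describe is not a genuine dichotomy: the failure of some lifts of the meridional arc from $y\in Y_\varphi$ to $Y$ to remain inside $B(\theta r)$ does \emph{not} imply that $f$, restricted to the shell or anywhere else, omits a set $F$ of positive capacity. Individual lifts can escape through $S(\theta r)$ while $f$ still attains every value on the shell. Hence Theorem~\ref{thm-mr}, whose hypothesis is a global omission $f:B(\theta r)\to\RR\setminus F$ on a \emph{ball}, is simply not available here, and restricting to a shell does not rescue it. This is a real gap, not a matter of bookkeeping.

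What Rickman actually does is closer to the \emph{proof} of Theorem~\ref{thm-mr} than to its statement: one lifts the meridional arcs from each preimage in $\overline{B}(r)$; the lifts that exit $B(\theta r)$ form a family $\Gamma^*$ with $\Delta(S(r),S(\theta r))<\Gamma^*$, and V\"ais\"al\"a's inequality (Lemma~\ref{mcf3}) together with~\eqref{capring} bounds the defect directly by $K_I(f)\,\omega_{n-1}(\log\theta)^{1-n}$ divided by the modulus of the image family. The factor $|\log u|^{n-1}$ enters because that image family consists of arcs crossing a spherical ring of inner radius $u$, whose modulus is comparable to $|\log u|^{1-n}$. No omitted set is ever invoked. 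If you replace your appeal to Theorem~\ref{thm-mr} by this direct application of Lemma~\ref{mcf3}, the outline becomes correct.
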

Noting that given a set $F$ of positive capacity and $t>0$
we can find  a subset of $F$ which has 
positive capacity and is 
contained in a ball of radius $t/2$, we obtain the following 
result from Theorem~\ref{thm-mr} and Lemma~\ref{la-nu-A}.
\begin{theorem}\label{thm-mr2}
Let $F\subset \RR$ be a set of positive capacity and let
$\theta>1$. Then there exists a constant $C$ depending only
on~$n$, $F$ and $\theta$ such that if $f:B^n(\theta r)\to \RR\setminus F$
is quasi\-regular, then
$A(r,f)\leq C\, K_I(f)$.
\end{theorem}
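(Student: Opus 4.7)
The plan is to combine Theorem~\ref{thm-mr} and Lemma~\ref{la-nu-A} with appropriately matched parameters, after first passing from $F$ to a small-diameter compact subset $F_1\subset F$ of positive capacity contained in a Euclidean ball. Theorem~\ref{thm-mr} requires its avoidance set to sit in a Euclidean ball $B^n(z,t/2)$, and the radius $t$ enters the constant, so some reduction of the given $F\subset\RR$ is needed.

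First I would construct $F_1$. Capacity zero is preserved under finite unions: if $u_j\in C_0^\infty(G)$ is admissible for a compact set $C_j\subset G$ with $\int|\nabla u_j|^n dm$ small ($j=1,2$), then $u_1+u_2$ is admissible for $C_1\cup C_2$, and $(a+b)^n\leq 2^{n-1}(a^n+b^n)$ bounds $\int|\nabla(u_1+u_2)|^n dm$. Covering $\RR$ by finitely many chordal balls centered at finite points and of any preassigned small chordal radius~$\delta$, since $F$ has positive capacity at least one of the intersections $F\cap\overline{B_\chi(a,\delta)}$ must have positive capacity. This intersection lies in some Euclidean ball $B^n(z_0,t/2)$, where $z_0$ and $t$ depend only on $F$ and $\delta$. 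By taking $\delta$ small enough, I can also arrange that the spherical radius $u$ of $S(z_0,t)$, viewed as an $(n-1)$-sphere in $\RR$, satisfies $u\leq\pi/4$.

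Next I would apply Theorem~\ref{thm-mr} to $f$ with $F$ replaced by $F_1$ and with its parameters $\theta$, $r$ replaced by $\sqrt\theta$, $\sqrt\theta\,r$, so that the resulting domain ball $B^n(\sqrt\theta\cdot\sqrt\theta\,r)=B^n(\theta r)$ matches the hypothesis. This yields
\[
\nu(\sqrt\theta\,r,S(z_0,t))\leq C_1\,\frac{K_I(f)}{(\log\sqrt\theta)^{n-1}\,\capacity(B^n(z_0,t),F_1)}.
\]
Then I would apply Lemma~\ref{la-nu-A} with $\tilde r=r$, $\tilde\theta=\sqrt\theta$, $\tilde R=\theta r$ (the condition $\tilde R>\tilde\theta\tilde r$ reduces to $\theta>1$), and $Y=S(z_0,t)$ of spherical radius $u$, giving
\[
A(r,f)\leq \nu(\sqrt\theta\,r,S(z_0,t))+Q\,\frac{K_I(f)\,|\log u|^{n-1}}{(\log\sqrt\theta)^{n-1}}.
\]
Summing the two estimates and absorbing every factor depending only on $n$, $F$, $\theta$ into a single constant $C$ gives $A(r,f)\leq C\,K_I(f)$.

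The main obstacle is bookkeeping rather than analysis: ensuring that the reduction of $F$ to $F_1$ yields constants depending only on $n$, $F$, $\theta$ (and not on $r$ or~$f$), and choosing the substitution $\theta\mapsto\sqrt\theta$ so that the domain ball in Theorem~\ref{thm-mr} agrees with the one in the hypothesis of Theorem~\ref{thm-mr2} while Lemma~\ref{la-nu-A}'s condition $\tilde R>\tilde\theta\tilde r$ is preserved. Once these choices are fixed, the estimate is immediate from the two cited results.
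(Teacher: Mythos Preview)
Your proposal is correct and follows essentially the same route as the paper: pass to a compact subset $F_1\subset F$ of positive capacity lying in a small Euclidean ball, then combine Theorem~\ref{thm-mr} with the right inequality of Lemma~\ref{la-nu-A}. The paper only sketches this in the sentence preceding the theorem, whereas you make explicit the finite-cover argument for finding $F_1$ and the substitution $\theta\mapsto\sqrt\theta$, $r\mapsto\sqrt\theta\,r$ that reconciles the domain balls and the hypothesis $\tilde R>\tilde\theta\tilde r$; these details are exactly what is needed and match the intended argument.
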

Clearly, it is irrelevant here  that the balls considered are
centered at $0$ so that if $a\in\R^n$ and
$f:B(a,\theta r)\to\R^n\setminus F$ is quasi\-regular, 
then $A\!\left(\overline{B}(a,r),f\right)\leq C\, K_I(f)$.
Similarly, 
we may consider balls with respect to the chordal metric
and obtain
$A\!\left(\overline{B}_\chi(a,r),f\right)\leq C\, K_I(f)$
if 
$a\in\RR$, $0<r<\theta r<2$
and 
$f:B_\chi(a,\theta r)\to\RR\setminus F$ is quasi\-regular.

\section{Capacity and the modulus of a path family}\label{moduli}
The modulus of a path family is a major tool in the study
of quasi\-regular maps. We review this concept only briefly;
see~\cite[Chapter II]{Rickman93} and~\cite[Chapter~2]{Vuorinen88}
for more details.
Let $\Gamma$ be a family of paths in $\R^n$.
We say that a non-negative Borel function $\rho:\R^n\to\R\cup\{\infty\}$
is \emph{admissible} if $\int_\gamma \rho\; ds\geq 1$ for all
locally rectifiable 
paths $\gamma\in \Gamma$ and denote by $\mathcal{F}(\Gamma)$ the family 
of all admissible Borel functions. Then
$$
M(\Gamma):=\inf_{\rho\in \mathcal{F}(\Gamma)}\int_{\R^n}\rho^n\; dm
$$
is called the \emph{modulus} of $\Gamma$. For the extension
to families of paths in $\RR$ we refer to~\cite[pp.~53--54]{Vuorinen88}.

For a domain $G\subset \RR$ and sets $E,F\subset \overline{G}$ 
we denote by $\Delta(E,F;G)$ the family of all paths which have
one endpoint in~$E$, one endpoint in $F$ and which are in $G$
otherwise. The connection with capacity is given by the following
result~\cite[Proposition II.10.2]{Rickman93}.
\begin{lemma}\label{mcf1}
Let $G\subset \R^n$ be open and $C\subset G$ compact.
Then 
$$\capacity (G,C)=M(\Delta(C,\partial G;G)).$$
\end{lemma}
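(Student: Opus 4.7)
The plan is to prove the two inequalities $\capacity(G,C)\geq M(\Delta(C,\partial G;G))$ and $\capacity(G,C)\leq M(\Delta(C,\partial G;G))$ separately, using the observation noted earlier in the paper that in the definition of capacity one may equivalently minimize over $W^1_{n,\mathrm{loc}}(G)$ functions (with the boundary-value condition interpreted appropriately) instead of $C^\infty_0(G)$ functions.

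For the inequality $\capacity(G,C)\geq M(\Gamma)$, where $\Gamma:=\Delta(C,\partial G;G)$, I would take any non-negative $u\in C^\infty_0(G)$ with $u\geq 1$ on $C$ and show that $\rho:=|\nabla u|$ is admissible for $\Gamma$. If $\gamma\in\Gamma$ is locally rectifiable, parametrized by arc length with $\gamma(0)\in C$ and $\gamma$ approaching $\partial G$, then $u\circ\gamma$ is absolutely continuous on each compact subinterval. Since $u$ has compact support in $G$, we have $u(\gamma(s))\to 0$ as $\gamma(s)\to\partial G$, while $u(\gamma(0))\geq 1$. The fundamental theorem of calculus along $\gamma$ combined with the chain rule gives $\int_\gamma |\nabla u|\,ds\geq |u(\gamma(0))-0|\geq 1$, so $\rho\in\mathcal{F}(\Gamma)$. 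Hence $\int_G|\nabla u|^n\,dm\geq M(\Gamma)$, and taking the infimum over admissible $u$ yields the claim.

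For the reverse inequality, take $\rho\in\mathcal{F}(\Gamma)$ and define
$$u(x):=\min\!\left\{1,\inf_{\gamma}\int_\gamma \rho\,ds\right\},\qquad x\in G,$$
where the inner infimum ranges over locally rectifiable paths in $G$ joining $x$ to $\partial G$; extend $u$ by $0$ on $\partial G$. Admissibility of $\rho$ shows $u\geq 1$ on $C$. The standard argument (cf.\ Rickman or V\"ais\"al\"a) is then to verify that $u$ is ACL in $G$ by checking, along almost every line $\ell$ parallel to a coordinate axis, that the path obtained by concatenating a subsegment of $\ell$ with a near-optimal path from the second endpoint to $\partial G$ gives an upper bound on the increment of $u$; this yields $|\partial_k u|\leq \rho$ a.e.\ on such lines for each $k$. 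Consequently $u\in W^1_{n,\mathrm{loc}}(G)$ with $|\nabla u|\leq \rho$ a.e., so $u$ is admissible in the extended capacity definition and $\capacity(G,C)\leq \int_G|\nabla u|^n\,dm\leq \int_G \rho^n\,dm$. Taking the infimum over $\rho$ gives $\capacity(G,C)\leq M(\Gamma)$.

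The main obstacle is the Sobolev regularity step in the second half: establishing that the infimum-of-path-integrals function $u$ lies in $W^1_{n,\mathrm{loc}}(G)$ with weak gradient dominated by $\rho$ requires a careful exceptional-set argument, since one must exclude the lines on which $\rho$ fails to be $L^n$-integrable and those along which the infimum is not realized in a controllable way. Once this ACL estimate is in place — which is where Fubini's theorem and the standard modulus-of-a-path-family machinery are used — the rest of the proof is formal.
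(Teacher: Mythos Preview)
The paper does not prove this lemma; it simply records it with a citation to \cite[Proposition~II.10.2]{Rickman93}. Your sketch is the standard two-inequality argument found there (and in V\"ais\"al\"a), so in substance you are reproducing the cited proof rather than diverging from the paper.

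One small remark on your second inequality: for the constructed $u$ to be an admissible competitor you need more than $u\in W^1_{n,\mathrm{loc}}(G)$ with $u\ge 1$ on $C$; you need $u$ to lie in the closure of $C^\infty_0(G)$ in the appropriate sense (equivalently, that $u$ vanishes at $\partial G$ in the Sobolev sense). When $G$ is bounded this follows from your extension $u=0$ on $\partial G$ together with the ACL estimate, but when $G$ is unbounded one has to truncate or otherwise argue that the capacity is unchanged by this relaxation. This is exactly the content of the parenthetical remark in the paper about replacing $C^\infty_0(G)$ by a Sobolev class, and it is handled in the references, so your sketch is fine as stated.
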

As an example we mention that for $0<r<s$ we have~\cite[p.~28]{Rickman93}
\begin{equation}\label{capring}
\capacity \!\left(B(s),\overline{B}(r)\right)
=M\!\left(\Delta\!\left(S(r),S(s);B(s)\!\setminus
\!\overline{B}(r)\right)\right)=\omega_{n-1}\left(\log\frac{s}{r}\right)^{1-n} .
\end{equation}
For two path families $\Gamma_1$ and $\Gamma_2$ we write
$\Gamma_1<\Gamma_2$ if every $\gamma\in \Gamma_2$ has a 
subpath belonging to $\Gamma_1$. As Ahlfors~\cite[p.~54]{Ahlfors}
puts it: $\Gamma_2$ has fewer and longer arcs.
The following lemma~\cite[p.~26]{Rickman93}
 follows directly from the definition.
\begin{lemma}\label{mcf2}
If $\Gamma_1<\Gamma_2$, then $M(\Gamma_1)\geq M(\Gamma_2)$.
\end{lemma}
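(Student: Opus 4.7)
The plan is to argue directly from the definition of the modulus: the inclusion $\Gamma_1<\Gamma_2$ forces an inclusion of admissible function classes $\mathcal F(\Gamma_1)\subset\mathcal F(\Gamma_2)$, and then the conclusion follows by taking infima.

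In more detail, first I would fix an arbitrary $\rho\in\mathcal F(\Gamma_1)$ and verify that $\rho\in\mathcal F(\Gamma_2)$. Take any locally rectifiable $\gamma_2\in\Gamma_2$. By the hypothesis $\Gamma_1<\Gamma_2$, there is a subpath $\gamma_1$ of $\gamma_2$ with $\gamma_1\in\Gamma_1$. Since $\gamma_1$ is a subpath of a locally rectifiable path it is itself locally rectifiable, and the admissibility of $\rho$ for $\Gamma_1$ gives $\int_{\gamma_1}\rho\,ds\geq 1$. Because $\rho$ is non-negative, extending the path of integration cannot decrease the integral, so
\[
\int_{\gamma_2}\rho\,ds\geq \int_{\gamma_1}\rho\,ds\geq 1,
\]
which shows that $\rho$ is admissible for $\Gamma_2$.

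Having established $\mathcal F(\Gamma_1)\subset\mathcal F(\Gamma_2)$, I would conclude by taking the infimum of $\int_{\R^n}\rho^n\,dm$ over the respective classes: since we are minimising the same functional over a larger set on the right, we get
\[
M(\Gamma_2)=\inf_{\rho\in\mathcal F(\Gamma_2)}\int_{\R^n}\rho^n\,dm\leq \inf_{\rho\in\mathcal F(\Gamma_1)}\int_{\R^n}\rho^n\,dm=M(\Gamma_1).
\]
There is essentially no obstacle here; the only mildly subtle point is making sure that the subpath inherits local rectifiability from $\gamma_2$ and that the non-negativity of $\rho$ is used correctly to justify the monotonicity of the path integral under extension, both of which are standard.
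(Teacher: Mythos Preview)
Your proof is correct and is precisely the standard argument the paper alludes to when it says the lemma ``follows directly from the definition.'' There is nothing to add.
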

We note that it follows from the definition of capacity,
or from Lemma~\ref{mcf2} and~\eqref{capring}, that
\begin{equation}\label{capsub}
\capacity(C,G)\geq \capacity(C,G')
\quad\text{if}\ G\subset G'.
\end{equation}

The next lemma is known as V\"ais\"al\"a's 
inequality~\cite[Theorem~II.9.1]{Rickman93}.
\begin{lemma}\label{mcf3}
Let $f$ be quasi\-regular in a domain $\Omega\subset \RR$,
let $\Gamma^*$ be a path family in $\Omega$ and let 
$\Gamma$ be a path family in~$\RR$.
Suppose that there exists $m\in\N$ such that for every
path $\beta:I\to \RR$ in $\Gamma$ there are paths
$\alpha_1,\dots,\alpha_m$ in $\Gamma^*$ such that 
$f\circ\alpha_j\subset \beta$ for all $j$ and such that
for every $x\in \Omega$ 
and $t\in I$ 
the equality
$\alpha_j(t)=x$ holds for at most $i(x,f)$ indices~$j$.
Then
$$
M(\Gamma)\leq \frac{K_I(f)}{m}M(\Gamma^*).
$$
\end{lemma}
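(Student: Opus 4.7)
The plan is to construct, for each admissible metric $\rho^{*}\in\mathcal{F}(\Gamma^{*})$, a companion $\rho\in\mathcal{F}(\Gamma)$ satisfying
\[
\int_{\R^{n}}\rho^{n}\,dm\;\le\;\frac{K_{I}(f)}{m}\int_{\R^{n}}(\rho^{*})^{n}\,dm,
\]
and then to pass to the infimum in the definition of modulus. The natural recipe is a pull-back of $\rho^{*}$ along $f$: off the Borel null set $N:=f(B_{f})\cup f(\{J_{f}=0\})$ I would set
\[
\rho(y):=\frac{1}{m}\,\max\sum_{j=1}^{m}\frac{\rho^{*}(x_{j})}{\ell(Df(x_{j}))},
\]
where the maximum ranges over multisets $(x_{1},\dots,x_{m})$ of preimages of $y$ in which each $x\in f^{-1}(y)$ appears at most $i(x,f)$ times, and $\rho\equiv 0$ on $N$. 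Standard quasi\-regular theory (the Sard-type theorem for the branch set, together with the ACL property and the Lusin N-condition for $f$) guarantees $|N|=0$ and that $\rho$ is Borel.

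For admissibility, parametrize $\beta\in\Gamma$ by arclength and take the lifts $\alpha_{1},\dots,\alpha_{m}$ supplied by the hypothesis. The chain rule and the definition of $\ell(Df)$ give $|\alpha_{j}'(t)|\le 1/\ell(Df(\alpha_{j}(t)))$ a.e.\ on the subinterval $I_{j}\subset I$ on which $f\circ\alpha_{j}$ runs, so admissibility of $\rho^{*}$ yields
\[
1\;\le\;\int_{\alpha_{j}}\rho^{*}\,ds\;\le\;\int_{I_{j}}\frac{\rho^{*}(\alpha_{j}(t))}{\ell(Df(\alpha_{j}(t)))}\,dt.
\]
Summing in $j$ and invoking the multiplicity constraint — which guarantees that at each $t$ the sum $\sum_{j:t\in I_{j}}\rho^{*}(\alpha_{j}(t))/\ell(Df(\alpha_{j}(t)))$ is of the form admitted inside the maximum defining $\rho(\beta(t))$ — one concludes $m\le m\int_{\beta}\rho\,ds$, so $\rho\in\mathcal{F}(\Gamma)$.

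For the $L^{n}$ estimate, the power-mean inequality $(\sum_{j=1}^{m}a_{j})^{n}\le m^{n-1}\sum_{j=1}^{m}a_{j}^{n}$ applied inside the max yields the pointwise bound
\[
\rho(y)^{n}\;\le\;\frac{1}{m}\sum_{x\in f^{-1}(y)}i(x,f)\,\frac{\rho^{*}(x)^{n}}{\ell(Df(x))^{n}}.
\]
Integrating in $y$ and applying the quasi\-regular change-of-variables formula $\int_{\R^{n}}\sum_{x\in f^{-1}(y)}g(x)\,dm(y)=\int_{\Omega}g(x)J_{f}(x)\,dm(x)$ (valid off $N$) together with the inner-dilatation inequality $J_{f}\le K_{I}(f)\,\ell(Df)^{n}$ produces the required bound. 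The principal obstacle is the measure-theoretic bookkeeping around $B_{f}$ and $\{J_{f}=0\}$: justifying that these exceptional sets have negligible effect both on the admissibility check and on the branched change-of-variables formula is where the deep regularity properties of quasi\-regular maps are really used, while everything else reduces to Hölder's inequality and the pointwise dilatation bound.
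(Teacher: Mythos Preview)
The paper does not supply a proof of this lemma; it is quoted as V\"ais\"al\"a's inequality with a reference to Rickman~\cite[Theorem~II.9.1]{Rickman93}. Your outline is precisely the argument found there: define the push-forward density $\rho$ via the weighted supremum over preimage multisets, verify admissibility for $\Gamma$ by summing the line integrals of the lifts and using the pointwise bound $|\alpha_j'|\le 1/\ell(Df\circ\alpha_j)$, and obtain the $L^n$-bound from the power-mean inequality together with the branched change-of-variables formula and the defining inequality $J_f\le K_I(f)\,\ell(Df)^n$. The technicalities you single out --- absolute continuity of the lifts (Poletsky's lemma), Fuglede's lemma to discard an exceptional subfamily of paths, measurability of $\rho$, and the null status of $f(B_f)$ and $f(\{J_f=0\})$ --- are exactly where Rickman's full proof does its work, so your assessment of where the difficulty lies is accurate.

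One small correction: take the supremum over preimage multisets of size \emph{at most}~$m$ rather than exactly~$m$. At a given parameter value $t\in I$ fewer than $m$ of the subintervals $I_j$ may contain~$t$, and for $y\notin f(\Omega)$ or $y$ with small multiplicity there may be no multiset of size exactly~$m$ at all. Allowing smaller multisets leaves both the admissibility step (the partial sum is still dominated by $m\rho(\beta(t))$) and the $L^n$ step (the power-mean bound $(\sum_{j=1}^{k}a_j)^n\le k^{n-1}\sum a_j^n\le m^{n-1}\sum a_j^n$ still applies) unchanged.
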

The following result~\cite[Theorem~II.10.11]{Rickman93} is
a consequence of Lemma~\ref{mcf3}.
\begin{lemma}\label{mcf3a}
Let $f:\Omega\to\R^n$ be quasi\-regular, let
$(G,C)$ be a condenser in $\Omega$ and put
$m:=\inf_{y\in f(C)}n(C,y)$. 
Then
$$
\capacity (f(G),f(C))\leq\frac{K_I(f)}{m} \capacity (G,C).
$$
\end{lemma}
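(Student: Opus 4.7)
The plan is to combine Lemma~\ref{mcf1} (which translates capacity into the modulus of a path family) with V\"ais\"al\"a's inequality, Lemma~\ref{mcf3}. Concretely, set
\[
\Gamma:=\Delta(f(C),\partial f(G);f(G))
\qquad\text{and}\qquad
\Gamma^*:=\Delta(C,\partial G;G),
\]
so that $\capacity(f(G),f(C))=M(\Gamma)$ and $\capacity(G,C)=M(\Gamma^*)$ by Lemma~\ref{mcf1}. It then suffices to verify the hypothesis of Lemma~\ref{mcf3} with this $m$, after which the desired inequality drops out immediately.

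To verify that hypothesis, fix a path $\beta:[0,1]\to\overline{f(G)}$ in $\Gamma$ and let $y_0:=\beta(0)\in f(C)$. By the definition of $m$ we have $n(C,y_0)=\sum_{x\in f^{-1}(y_0)\cap C} i(x,f)\ge m$. Rickman's path-lifting theorem for quasi\-regular maps produces, for each such $x$, exactly $i(x,f)$ maximal $f$-liftings of $\beta$ starting at~$x$, and these liftings satisfy the noncrossing property that at any point $z\in\Omega$ and any parameter $t$ the equality ``lift equals $z$ at time $t$'' holds for at most $i(z,f)$ of them. Summing over $x\in f^{-1}(y_0)\cap C$ gives at least $m$ liftings $\tilde\alpha_1,\dots,\tilde\alpha_m$ of $\beta$, starting in $C$, with the requisite multiplicity bound.

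Each $\tilde\alpha_j$ still has to be truncated to produce a path in $\Gamma^*$. Let $t_j:=\sup\{t:\tilde\alpha_j([0,t])\subset G\}$ and set $\alpha_j:=\tilde\alpha_j|_{[0,t_j]}$. If $t_j<1$, then continuity forces $\alpha_j(t_j)\in\partial G$. If instead $t_j=1$ with $\tilde\alpha_j([0,1])\subset \overline{G}$, then since $f$ is open and $\beta(1)\in\partial f(G)$, the endpoint must satisfy $\alpha_j(1)\in\partial G$ as well (an interior endpoint would land in the open set $f(G)$). Either way, $\alpha_j\in\Gamma^*$ and $f\circ\alpha_j$ is a subpath of~$\beta$, and the noncrossing property is inherited by the restrictions. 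Lemma~\ref{mcf3} now yields $M(\Gamma)\le \tfrac{K_I(f)}{m}M(\Gamma^*)$, which is the claim.

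The main obstacle is the lifting step: showing that the $m$ lifts exist, that they can be arranged to satisfy the multiplicity condition required by Lemma~\ref{mcf3}, and that truncation keeps them inside~$\Gamma^*$. All of this is bundled into Rickman's lifting theorem together with openness of~$f$, so once those tools are invoked the rest is bookkeeping.
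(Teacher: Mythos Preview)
Your proposal is correct and follows exactly the route the paper indicates: the paper gives no self-contained argument here but simply cites Rickman's Theorem~II.10.11 and notes that it is a consequence of V\"ais\"al\"a's inequality (Lemma~\ref{mcf3}). Your derivation---pass from capacity to path modulus via Lemma~\ref{mcf1}, produce $m$ lifts of each $\beta\in\Gamma$ starting in $C$ via Rickman's path-lifting theorem, truncate them at their first exit from $G$, and apply Lemma~\ref{mcf3}---is precisely the standard proof behind that citation.
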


As mentioned, the proof of Theorem~\ref{thm-mr} follows the 
arguments of Mattila and Rickman~\cite{Mattila79}.
The following lemma is taken from their paper~\cite[Lemma~3.2]{Mattila79}.
\begin{lemma}\label{mcf4}
Let $n\geq 2$ and $0<u<v<\infty$. For
$F_1\subset \overline{B^n}(u)$ and  
$F_2\subset S^{n-1}(v)$.
define the path families
$\Sigma_{12}:=\Delta(F_1,F_2;B(v))$,
$\Sigma_{1}:=\Delta(F_1,S(v);B(v))$ and
$\Sigma_{2}:=\Delta(F_2,S(u);B(v)\!\setminus\! \overline{B}(u))$.
Then
$$
M(\Sigma_{12})\geq 3^{-n} \min\{M(\Sigma_{1}),M(\Sigma_{2}),c_n\log(v/u)\},
$$
where $c_n$ depends only on~$n$.
\end{lemma}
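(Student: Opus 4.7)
Let $\rho\in\mathcal{F}(\Sigma_{12})$ be arbitrary and put $M:=\int_{B(v)}\rho^n\,dm$; I will use a trichotomy to bound $M$ from below. If $3\rho$ is admissible for $\Sigma_1$, then $M(\Sigma_1)\le 3^nM$ and we are done; likewise for $\Sigma_2$. Otherwise there exist paths $\alpha\in\Sigma_1$ and $\beta\in\Sigma_2$ with $\int_\alpha\rho\,ds<1/3$ and $\int_\beta\rho\,ds<1/3$, and in this third case the target is the stronger lower bound
\[
M\ge 3^{-n}\omega_{n-1}(\log(v/u))^{1-n}.
\]
Once this is in hand, the lemma follows with $c_n:=\omega_{n-1}$: the inclusions $\Sigma_1\subset\Delta(\overline{B}(u),S(v);B(v))$ and $\Sigma_2\subset\Delta(S(u),S(v);B(v)\setminus\overline{B}(u))$ combined with~\eqref{capring} give $M(\Sigma_i)\le\omega_{n-1}(\log(v/u))^{1-n}$ for $i=1,2$, so the bound above already dominates $3^{-n}\min\{M(\Sigma_1),M(\Sigma_2)\}$; a short case check on whether $\log(v/u)\le 1$ then handles the remaining term $3^{-n}c_n\log(v/u)$ in the minimum.

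\textbf{Manufacturing $\gamma_\theta$.} For the third case, I plan to construct, for almost every $\theta\in S^{n-1}(1)$, a path $\gamma_\theta\in\Sigma_{12}$ whose $\rho$-length is approximately $\int_\alpha\rho\,ds+\int_{L_\theta}\rho\,ds+\int_\beta\rho\,ds$, where $L_\theta:=\{r\theta:u<r<v\}$ is the radial segment. Concretely, I would truncate $\alpha$ at a sphere $S(v')$ with $v'\uparrow v$, truncate $\beta$ at $S(u')$ with $u'\downarrow u$, retain the middle portion of $L_\theta$ between them, and close the two gaps with short arcs on $S(v')$ and $S(u')$. A Fubini/Fuglede argument in $(u',v',\theta)$ allows the closing arcs to be chosen with $\rho$-mass tending to zero for a set of $\theta$ of full measure, so admissibility of $\rho$ along $\gamma_\theta$ forces
\[
\int_u^v\rho(r\theta)\,dr\ge\frac{1}{3}\quad\text{for a.e.\ }\theta\in S^{n-1}(1).
\]

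\textbf{H\"older.} H\"older's inequality with exponents $n$ and $n/(n-1)$ applied to the factorization $\rho(r\theta)=\bigl(\rho(r\theta)r^{(n-1)/n}\bigr)\cdot r^{-(n-1)/n}$ yields
\[
\Bigl(\int_u^v\rho(r\theta)\,dr\Bigr)^n\le(\log(v/u))^{n-1}\int_u^v\rho(r\theta)^nr^{n-1}\,dr.
\]
Integrating in $\theta$ and passing to spherical coordinates ($dm=r^{n-1}\,dr\,d\sigma$) bounds the right-hand side by $(\log(v/u))^{n-1}M$, while the left-hand side is at least $3^{-n}\omega_{n-1}$. Rearranging delivers the announced estimate.

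\textbf{Main obstacle.} The delicate point is the Fubini/Fuglede construction of the closing arcs: for a set of $\theta$ of full measure, the truncation radii $u',v'$ and the arcs on $S(u')$, $S(v')$ must be chosen simultaneously so that their $\rho$-mass is negligible. The essential input is that $\rho\in L^n(B(v))$ implies $\int_{S(r)}\rho^n\,dH^{n-1}<\infty$ for a.e.\ $r$, together with standard Fuglede-type averaging on each sphere; this is the technical heart of Mattila and Rickman's original argument~\cite[Lemma~3.2]{Mattila79}. Granted the construction, the remaining steps are the clean trichotomy and the spherical H\"older estimate above.
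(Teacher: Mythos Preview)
Your trichotomy is the right skeleton, and cases~1 and~2 are fine. The gap is in case~3: the assertion that the closing arcs on $S(v')$ and $S(u')$ can be chosen with $\rho$-mass tending to zero ``for a set of $\theta$ of full measure'' is false, and so is the conclusion $\int_u^v\rho(r\theta)\,dr\ge 1/3$ for a.e.\ $\theta$. Take $n=2$, $u=1$, $v=2$, let $F_1$ be a small disk about $-\tfrac12 e_1$ and $F_2$ a short arc of $S(2)$ about $2e_1$, and set $\rho=\tfrac{1}{2\delta}\,\mathbf 1_{\{|x_1|<\delta\}\cap B(2)}$ for some $\delta\in(0,1/4)$. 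Every curve in $\Sigma_{12}$ must cross the slab $\{|x_1|<\delta\}$, so $\rho$ is admissible; the segments $\alpha$ from $-\tfrac12 e_1$ to $-2e_1$ and $\beta$ from $2e_1$ to $e_1$ avoid the slab, so $\int_\alpha\rho=\int_\beta\rho=0$ and we are in case~3; yet $L_\theta$ misses the slab whenever $|\cos(\arg\theta)|>\delta$, hence $\int_{L_\theta}\rho=0$ on a set of $\theta$ of positive measure. No Fuglede averaging can repair this, because the arcs you need on $S(v')$ genuinely carry at least $1/3$ of the $\rho$-mass. Consequently your target bound $M\ge 3^{-n}\omega_{n-1}(\log(v/u))^{1-n}$ in case~3 is not available; if it were, the term $c_n\log(v/u)$ in the minimum would be redundant, which it is not.

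What actually happens in case~3 (and this is how the Mattila--Rickman argument runs) is different and simpler. Once you have $\alpha\in\Sigma_1$, $\beta\in\Sigma_2$ with $\int_\alpha\rho,\int_\beta\rho<1/3$, note first that $|\alpha|\cap|\beta|=\emptyset$ (otherwise concatenation would give a curve in $\Sigma_{12}$ of $\rho$-length $<2/3$). For any locally rectifiable $\gamma$ in $B(v)$ joining $|\alpha|$ to $|\beta|$, a suitable concatenation $\alpha_{|}\ast\gamma\ast\beta_{|}^{-1}$ lies in $\Sigma_{12}$, whence $\int_\gamma\rho>1/3$; thus $3\rho$ is admissible for $\Gamma:=\Delta(|\alpha|,|\beta|;B(v))$ and $M\ge 3^{-n}M(\Gamma)$. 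Since $|\alpha|$ and $|\beta|$ are disjoint continua in $\overline{B}(v)$, each meeting both $\overline{B}(u)$ and $S(v)$, a standard Loewner-type lower bound (this is the step that produces the constant $c_n$) gives $M(\Gamma)\ge c_n\log(v/u)$. That is the source of the third term in the minimum, and it is essential rather than a by-product of a stronger radial estimate.
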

Note that with the terminology of Lemma~\ref{mcf4}
we have $M(\Sigma_{1})=\capacity(B(v),F_1)$ by Lemma~\ref{mcf1}.

The next lemma is implicit in the proof of~\cite[Lemma~3.3]{Mattila79},
but for completeness we include the proof.
\begin{lemma}\label{mcf5}
For $n\in\N$ there exist positive constants $\alpha$ and $\beta$
such that if $r>0$ and $A\subset S^{n-1}(r)$ is compact, then
$$
M\!\left(\Delta\!\left(S(r/2),A;B(r)\!\setminus\! \overline{B}(r/2)
\right) \right)
\geq \alpha \left(\log \left(\frac{\beta r^{n-1}}{H^{n-1}(A)}
\right)\right)^{1-n}.
$$
\end{lemma}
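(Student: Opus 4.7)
Plan. By the scale invariance of the $n$-modulus in $\R^n$ under the dilation $x\mapsto rx$, I may reduce to the case $r=1$. Write $\sigma:=H^{n-1}(A)$ and $\Gamma:=\Delta(S(1/2),A;B(1)\setminus\overline{B}(1/2))$; the task is to prove $M(\Gamma)\geq\alpha(\log(\beta/\sigma))^{1-n}$ for positive constants $\alpha,\beta$ depending only on $n$.

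The case $\sigma\geq\omega_{n-1}/2$ is easy: comparing $\Gamma$ with the full annulus family $\Delta(S(1/2),S(1);B(1)\setminus\overline{B}(1/2))$ via Lemma~\ref{mcf2} and invoking \eqref{capring} gives $M(\Gamma)\geq c_1>0$ for a dimensional constant $c_1$; one then chooses $\beta$ large enough that $\alpha(\log(\beta/\sigma))^{1-n}\leq c_1$ holds throughout this range.

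The substantive case is $\sigma<\omega_{n-1}/2$. Enlarging $A$ slightly to a compact set $\widetilde{A}$ contained in a small neighborhood disjoint from $\overline{B}(1/2)$, Lemma~\ref{mcf2} combined with Lemma~\ref{mcf1} gives $M(\Gamma)\geq \capacity(\Omega,\overline{B}(1/2))$, where $\Omega$ is the component of $B(1)\setminus\widetilde{A}$ containing $\overline{B}(1/2)$. A M\"obius inversion---which is conformal and hence preserves the $n$-modulus---centered at a suitably chosen pole close to a point of $A$ transforms this configuration into a standard spherical condenser of radius ratio $R\asymp 1/\sigma^{1/(n-1)}$ in which the image of $\widetilde{A}$ sits as a compact set of positive $H^{n-1}$-measure on (or close to) the inner sphere. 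Combining \eqref{capring} with a classical Loewner-type lower bound---that a compact subset of the inner sphere of a standard ring which has positive $(n-1)$-dimensional Hausdorff measure has capacity of the same order as the full ring---yields $M(\Gamma)\geq c_2(\log R)^{1-n}$, and substituting $R\asymp 1/\sigma^{1/(n-1)}$ produces the desired inequality after absorbing the factor $(n-1)^{n-1}$ into $\alpha$ and $\beta$.

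The main obstacle will be the bookkeeping in the M\"obius rescaling together with invoking, or establishing from scratch, the Loewner-type lower bound for thick targets; the latter can be proved by constructing a Frostman-type measure on $\widetilde{A}$ satisfying $\mu(B(x,t))\leq t^{n-1}$ and applying H\"older's inequality to the admissibility condition $\int_\gamma\rho\,ds\geq 1$ along a suitably averaged family of path subfamilies joining the outer sphere of the large ring to points of $\widetilde{A}$, but the constants must be tracked carefully through the conformal change of coordinates.
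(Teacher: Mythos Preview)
Your central reduction in the small-$\sigma$ case is wrong. You claim that Lemma~\ref{mcf2} together with Lemma~\ref{mcf1} yields $M(\Gamma)\geq \capacity(\Omega,\overline{B}(1/2))$ with $\Omega$ the component of $B(1)\setminus\widetilde A$ containing $\overline{B}(1/2)$. But Lemma~\ref{mcf2} would require that every path in $\Delta(\overline{B}(1/2),\partial\Omega;\Omega)$ have a subpath in $\Gamma$; this fails, since $\partial\Omega$ contains a large portion of $S(1)\setminus\widetilde A$, and a path from $S(1/2)$ to such a point need never touch~$A$. In fact the inequality itself is false: when $H^{n-1}(A)\to 0$ one has $M(\Gamma)\to 0$ (a singleton target gives modulus zero), whereas $\capacity(\Omega,\overline{B}(1/2))\to\capacity(B(1),\overline{B}(1/2))=\omega_{n-1}(\log 2)^{1-n}$, a positive constant. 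The same misapplication of Lemma~\ref{mcf2} occurs in your ``easy'' case: $\Gamma$ is a \emph{subfamily} of the full annulus family, so Lemma~\ref{mcf2} gives only $M(\Gamma)\leq \omega_{n-1}(\log 2)^{1-n}$, the wrong direction. Your M\"obius step is also incomplete, since it tacitly assumes $A$ is concentrated in a region of diameter $\asymp\sigma^{1/(n-1)}$; a set of small $H^{n-1}$-measure may nevertheless have diameter comparable to~$1$.

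The paper's argument avoids all of this by keeping $A$, not $\overline{B}(1/2)$, as the compact part of the condenser. Gehring's reflection lemma gives $M(\Gamma)=\tfrac12 M(\Delta(S(1/2)\cup S(2),A;B(2)\setminus\overline{B}(1/2)))\geq\tfrac12\capacity(B(2),A)$. One then applies Sarvas's spherical symmetrization to replace $A$ by the cap $A^*$ of the same $H^{n-1}$-measure, translates so that $A^*$ becomes a connected set through the origin of diameter $\gtrsim\sigma^{1/(n-1)}$, and compares with the Gr\"otzsch condenser $E_G(t)$, whose capacity has the explicit lower bound $\omega_{n-1}(\log(\lambda_n/t))^{1-n}$. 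This symmetrization step is exactly what handles the ``spread-out $A$'' obstacle your M\"obius approach cannot.
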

Here the right hand side is understood to be $0$ if 
$H^{n-1}(A)=0$.
\begin{proof}[Proof of Lemma~\ref{mcf5}]
By a result of Gehring~\cite[Lemma~1]{Gehring76}
we have 
$$
M\!\left(\Delta\!\left(S(r/2),A;B(r)\!\setminus \!\overline{B}(r/2)
\right)\right)
=
\frac12 M\!\left(\Delta\!\left(S(r/2)\cup S(2r),A;B(2r)\!\setminus\!
 \overline{B}(r /2)
\right)\right).
$$
Thus
$$
\begin{aligned}
M\!\left(\Delta\!\left(S(r/2),A;B(r)\!\setminus\! \overline{B}(r/2)\right)\right)
&=\frac12\capacity\!\left(B(2r)\!\setminus\! \overline{B}(r/2),A\right)
\\ &
\geq \frac12\capacity (B(2r),A)
\end{aligned}
$$
by Lemma~\ref{mcf1} and~\eqref{capsub}.

We may assume that $H^{n-1}(A)>0$ and
denote by $A^*$ the spherical symmetrization of~$A$;
that is, 
using the notation $e_k$ for the \mbox{$k$-th} unit vector 
we put 
$A^*=S(r)\cap \overline{B}(r e_n,s)$, where 
$s$ is chosen such
that $H^{n-1}(A)=H^{n-1}(A^*)$.
By a result of Sarvas~\cite{Sarvas72} we have 
$$
\capacity (B(2r),A)\geq \capacity (B(2r),A^*).
$$
Combining the last two estimates we obtain
\begin{equation}\label{m1}
M\!\left(\Delta\!\left(S(r/2),A;B(r)\!\setminus \!\overline{B}(r/2)
\right)\right)
\geq
\frac12 \capacity (B(2r),A^*).
\end{equation}
We note that the modulus is invariant 
under  translations. With $T(x)=x-r e_n$ we thus have
\begin{equation}\label{m2}
\capacity (B(2r),A^*)
=\capacity (T(B(2r)),T(A^*))
\geq  \capacity (B(3r),T(A^*))
\end{equation}
by~\eqref{capsub}.
Now there exists $c>0$ such that
$$
(\diam A^*)^{n-1}\geq cH^{n-1}(A^*)=cH^{n-1}(A),
$$
where $\diam A^*$ denotes the diameter of $A^*$.
Thus there exists $a\in A^*$ with
$$
|T(a)|=|a-re_n|\geq \frac{1}{2}\left(cH^{n-1}(A)\right)^{1/(n-1)}.
$$
Since $T(A^*)$ is connected and $0\in T(A^*)$, the extremality
of the Gr\"otzsch condenser 
$$
E_{G}(t):=(B^n(1),[0,t e_1])
$$
yields~\cite[Lemma~III.1.9]{Rickman93} 
$$
\capacity (B(3r),T(A^*))
\geq  \capacity E_G\!\left(\frac{|T(a)|}{3r}\right).
$$
Combining this with the estimate~\cite[Lemma~III.1.2]{Rickman93}
$$
\capacity E_{G}(t)\geq \omega_{n-1} \left(\log 
\frac{\lambda_n}{t}\right)^{1-n},
$$ 
where $\lambda_n$ depends only on~$n$, we obtain
\begin{equation} \label{m3}
\begin{aligned}
\capacity (B(3r),T(A^*))
\geq&\ \omega_{n-1}\left(\log \frac{3\lambda_nr}{|T(a)|}\right)^{1-n}\\
\geq&\ 
\omega_{n-1}\left( \log \frac{6\lambda_n r}{\left(cH^{n-1}(A)\right)^{1/(n-1)}} \right)^{1-n}\\
=&\
2\alpha
\left( \log \left(\frac{\beta r^{n-1}}{H^{n-1}(A)}\right)\right)^{1-n}
\end{aligned}
\end{equation}
for suitable constants $\alpha$ and $\beta$ depending only on~$n$.
The conclusion follows from~\eqref{m1}, \eqref{m2}
and~\eqref{m3}.
\end{proof}
We conclude this section with the following lemma
already mentioned in the 
introduction; see~\cite[Corollary VII.1.15]{Rickman93}.
\begin{lemma}\label{cap-dim}
Let $X\subset\R^n$ be compact. If $\dim X>0$, then
 $\capacity X>0$.
\end{lemma}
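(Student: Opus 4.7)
My plan is to prove the contrapositive: assuming $\dim X>0$, I exhibit a measure-theoretic witness to $\capacity X>0$. Fix $s\in(0,\dim X)$; then $H^s(X)>0$, and Frostman's lemma produces a nonzero compactly supported Radon measure $\mu$ on $X$ with the uniform growth bound $\mu(B(x,r))\le r^s$ for every $x\in\R^n$ and $r>0$.

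Fix next a bounded open $G\supset X$; by Lemma~\ref{mcf1} it suffices to prove $M(\Delta(X,\partial G;G))>0$. The strategy is to use $\mu$ as a test measure against admissible Borel $\rho\in L^n(G)$. For each $x\in X$, the radial segment from $x$ to $\partial G$ belongs to the path family for a.e.\ direction $\theta\in S^{n-1}$, so passing to polar coordinates centered at $x$ gives
$$\omega_{n-1}\le\int_{S^{n-1}}\!\int_0^{d(x,\theta)}\!\rho(x+t\theta)\,dt\,d\sigma(\theta)\le\int_G\frac{\rho(y)}{|y-x|^{n-1}}\,dy,$$
where $d(x,\theta)$ denotes the distance from $x$ to $\partial G$ along $\theta$. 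Integrating this inequality against $\mu$ and swapping orders of integration yields
$$\omega_{n-1}\mu(X)\le\int_G\rho(y)\,U^\mu(y)\,dy,\qquad U^\mu(y):=\int_X\frac{d\mu(x)}{|y-x|^{n-1}}.$$
If $U^\mu\in L^{n/(n-1)}(G)$, Hölder's inequality produces the lower bound $\mu(X)^n\le C\int_G\rho^n\,dm$, and hence $\capacity X=M(\Delta(X,\partial G;G))\ge \omega_{n-1}^n\mu(X)^n/C^n>0$.

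The main obstacle is that when $s<n-1$ the potential $U^\mu$ fails to be in $L^{n/(n-1)}(G)$, so the Riesz $(n-1)$-kernel is the wrong kernel for $n$-capacity. The remedy is to truncate radial paths at short scales and replace $|y-x|^{1-n}$ by a logarithmic kernel such as $\bigl(\log 1/|y-x|\bigr)^{n-1}$; a direct layer-cake computation using $\mu(B(y,e^{-t^{1/(n-1)}}))\le e^{-st^{1/(n-1)}}$ shows this kernel is $\mu$-integrable for every positive~$s$, since the decay is super-polynomial in $t$. Pairing the truncated radial estimate with this kernel via Hölder produces the required lower bound. This refinement is precisely the energy duality for $n$-capacity underlying Corollary~VII.1.15 of~\cite{Rickman93}, to which the lemma appeals.
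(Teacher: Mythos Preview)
The paper does not prove Lemma~\ref{cap-dim} at all; it simply records the statement and cites \cite[Corollary~VII.1.15]{Rickman93}. So there is no argument to compare your sketch against---both you and the paper ultimately appeal to the same reference.

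That said, your sketch has a genuine gap in the final paragraph. The first two paragraphs are fine: the Frostman measure exists, the radial-segment family is legitimate, and the polar change of variables produces the Riesz $(n-1)$-potential $U^\mu$. You also correctly diagnose why this fails when $s<n-1$. But the ``remedy'' is not carried out. The Riesz kernel $|y-x|^{1-n}$ arises from the Jacobian $t^{n-1}$ in polar coordinates; you cannot simply ``replace'' it by $(\log 1/|y-x|)^{n-1}$ while keeping the same radial-path framework. If one tries H\"older along a single radial segment with the weight $t^{n-1}$, the dual factor is $\int_0^{d} t^{-1}\,dt$, which diverges at $t=0$, and no truncation of the path fixes this while preserving admissibility (a truncated segment no longer starts in~$X$). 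Your layer-cake computation is correct and shows that the Frostman measure has finite energy with respect to the logarithmic kernel, but the passage from that energy bound to a lower bound on the modulus is precisely the substantive content of the potential-theoretic characterization of conformal capacity in Rickman (and in Wallin~\cite{Wallin77}, which the paper cites for the sharper Lemma~\ref{cap-haus}); it is not a ``routine H\"older step'' on top of what you have written. In short, you have identified the right ingredients but not assembled them, and your closing sentence tacitly acknowledges this by deferring to the same citation the paper uses.
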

A strengthened form of Lemma~\ref{cap-dim} is given by
Lemma~\ref{cap-haus} below.

\section{Proof of Theorem~\ref{thm-mr}}\label{mr}
Without loss of generality we may assume that $z=0$.
For $k\in\N$ let 
$$
A_k:=\{y\in S(t):n(r,y)=k\}
\quad\text{and}\quad
B_k:=\{y\in S(t):n(r,y)\geq k\}.
$$
Let $B_k'\subset B_k$ be compact with
$H^{n-1}(B_k')\geq H^{n-1}(B_k)/2$ and
consider the path family  $\Gamma_k:=\Delta(F,B_k';B(t))$.
Each $\gamma\in \Gamma_k$ has $k$ liftings
$\alpha_1,\dots,\alpha_k$ under $f$ which
connect a point in $\overline{B}(r)$ to $S(\theta r)$
and have the properties stated in Lemma~\ref{mcf3};
cf.~\cite[Section II.3]{Rickman93}. Let $\Gamma_k^*$
be the family of all these liftings. Then
$$
M(\Gamma_k)\leq \frac{K_I(f)}{k}M(\Gamma_k^*)
$$
by Lemma~\ref{mcf3}.
By Lemma~\ref{mcf2} and~\eqref{capring} we have
$$
M(\Gamma_k^*)\leq M\!\left(\Delta\!\left(S(r),S(\theta r);B(\theta r)
\!\setminus\! \overline{B}(r)
\right)\right)=\omega_{n-1} (\log\theta)^{1-n}.
$$
Combining the last two inequalities we
obtain
\begin{equation}\label{p12a}
M(\Gamma_k)\leq \frac{K_I(f)}{k} \omega_{n-1} (\log\theta)^{1-n}.
\end{equation}
Applying Lemma~\ref{mcf4} with $F_1=F$, $F_2=B_k'$, $u=t/2$ and
$v=t$ 
and noting that
$M(\Delta(F,S(t);B(t)))=\capacity( B(t),F)$ 
by Lemma~\ref{mcf1}
we obtain
\begin{equation}\label{Gkc}
M(\Gamma_k)\geq 3^{-n}\min\left\{
\capacity( B(t),F), M(\Delta(B_k',S(t/2);B(t))),c_n\log 2\right\}.
\end{equation}
Let $k_0$ be the integer part of 
$$
\frac{ 3^n K_I(f) \omega_{n-1} (\log\theta)^{1-n}}{
\min\left\{
\capacity(B(t),F ), c_n\log 2\right\} } .
$$
Using~\eqref{p12a} we see that for 
$k>k_0$ 
the minimum in the right side of~\eqref{Gkc}
is attained by the term in the middle so that
$$
M(\Gamma_k)\geq  M(\Delta(B_k',S(t/2);B(t)))
$$
and thus 
$$
M(\Gamma_k)
\geq \alpha \left(\log \left(\frac{\beta t^{n-1}}{H^{n-1}(B_k')}
\right)\right)^{1-n}
$$
by Lemma~\ref{mcf5}. Together with~\eqref{p12a} we 
obtain
$$
\alpha \left(\log \left(\frac{\beta t^{n-1}}{H^{n-1}(B_k')}
\right)\right)^{1-n}
\leq 
\frac{K_I(f)}{k}
\omega_{n-1} (\log\theta)^{1-n} 
\quad\text{for}\ k> k_0.
$$
This yields
\begin{equation}\label{p12b}
\frac{H^{n-1}(B_k')}{t^{n-1}}\leq \beta
\exp\left(-\tau \log\theta
\left(\frac{k}{K_I(f)}\right)^{1/(n-1)}\right)
\quad\text{for}\ k> k_0,
\end{equation}
where $\tau=(\omega_{n-1}/\alpha)^{1/(1-n)}$ depends only on~$n$.
Now
\begin{equation}\label{p12d}
\begin{aligned}
\nu(r,S(z,t))
=&\frac{1}{\omega_{n-1}t^{n-1}}
\sum_{k=1}^{\infty} k H^{n-1}(A_k)\\
=&\frac{1}{\omega_{n-1}t^{n-1}}
\sum_{k=1}^{\infty} k \left(H^{n-1}(B_k)-H^{n-1}(B_{k+1})\right)\\
=&\frac{1}{\omega_{n-1}t^{n-1}}
\sum_{k=1}^{\infty} H^{n-1}(B_k)\\
\leq&
\frac{1}{\omega_{n-1}t^{n-1}}
\left( k_0 H^{n-1}(S(t))
+2\sum_{k=k_0+1}^{\infty} H^{n-1}(B_k') \right)\\
=& k_0 + \frac{2}{\omega_{n-1}t^{n-1}}
\sum_{k=k_0+1}^{\infty} H^{n-1}(B_k').
\end{aligned}
\end{equation}
By~\eqref{p12b} we have
\begin{equation}\label{p12e}
\begin{aligned}
\quad &
\frac{2}{\omega_{n-1}t^{n-1}}
\sum_{k=k_0+1}^{\infty} H^{n-1}(B_k')
\\
\leq&
\frac{2\beta}{\omega_{n-1}}
\sum_{k=1}^{\infty} 
\exp\left(-\tau \log\theta\left(\frac{k}{K_I(f)}\right)^{1/(n-1)}\right)\\
\leq&
\frac{2\beta}{\omega_{n-1}}
\int_{u=0}^{\infty}
\exp\left(-\tau \log\theta\left(\frac{u}{K_I(f)}\right)^{1/(n-1)}\right)du\\
=&
C_1\; K_I(f)(\log \theta)^{1-n},
\end{aligned}
\end{equation}
with 
$$
C_1:=\frac{2}{\omega_{n-1}\beta}
\int_{u=0}^{\infty}
\exp\left(-\tau u^{1/(n-1)}\right)du
$$
depending only on~$n$.
Noting that 
\begin{equation}\label{p12f}
\capacity(B(t),F)\leq \capacity\!\left(B(t),\overline{B}(t/2)\right)
= \omega_{n-1} (\log 2)^{1-n}
\end{equation}
by~\eqref{capring} we see that
\begin{equation}\label{p12g}
k_0
\leq C_2\frac{K_I(f)(\log\theta)^{1-n}}{\capacity\!\left(B(t),F\right)}
\end{equation}
for some constant $C_2$ depending only on~$n$.
Combining~\eqref{p12d}, \eqref{p12e} and~\eqref{p12g} we obtain
$$
\nu(r,S(z,t))\leq \left( C_1 + \frac{C_2}{\capacity\!\left(B(t),F\right)}
\right) K_I(f)(\log\theta)^{1-n},
$$
which together with~\eqref{p12f} yields~\eqref{nuSzt}
with $C:=\omega_{n-1} (\log 2)^{1-n}C_1+C_2$.\qed

\section{Proof of Theorems~\ref{thm1}--\ref{thm3}}\label{proofthm1}
\begin{proof}[Proof of Theorem \ref{thm1}]
First we note that
$$A\!\left(\RR,f^k\right)=\deg(f^k)=(\deg(f))^k$$
 for $k\in\N$.
Next we observe that there exists a constant $L$ depending 
only on $n$ such $\RR$ can be covered by $Lk^{n}$ 
balls of chordal radius $1/k$, for all $k\in\N$.
Hence for each $k\in\N$ there exists $x_{k}\in\RR$
such that
$$
A\!\left(\overline{B}_\chi\!\left(x_{k},1/k\right),f^k\right)
\geq \frac{1}{Lk^n} A\!\left(\RR,f^k\right)
=\frac{(\deg(f))^n}{Lk^n}.$$
The sequence $(x_k)$ has a convergent subsequence,
say $x_{k_j}\to x$.
We will show that~\eqref{1c} holds for every neighborhood $U$ of~$x$.

Suppose that this is not the case. 
Then there exists a set $F$ of positive capacity and
$\delta>0$ 
such that $O^+\!\left(\overline{B}_\chi(x,2 \delta)\right)
\subset \RR\setminus F$. 
Since $ K_I(f^k)\leq K_I(f)^k$ by~\eqref{KIfg}
we deduce from Theorem~\ref{thm-mr2}  and the remark
following it that
$$A\!\left(\overline{B}_\chi(x,\delta),f^k\right)
\leq C K_I(f)^k$$
for some constant~$C$.
On the other hand, for sufficiently large $k$ we have
$\overline{B}_\chi(x_{k_j},1/k_j)\subset \overline{B}_\chi(x,\delta)$ and thus
$$
A\!\left(\overline{B}_\chi(x,\delta),f^{k_j}\right)\geq
A\!\left(\overline{B}_\chi\left(x_{{k_j}},1/{k_j}\right),f^{k_j}\right)
\geq
\frac{(\deg(f))^{k_j}}{L k_j^n}.$$
The last two inequalities yield
$$\frac{(\deg(f))^{k_j}}{L k_j^n} \leq C K_I(f)^{k_j}.$$
For large $j$ this contradicts the assumption that $\deg(f)>K_I(f)$.
\end{proof}
\begin{proof}[Proof of Theorem \ref{thm0}]
Denote by $J_1(f)$ the set where the iterates are not normal
and by $J_2(f)$ the set given by Definition~\ref{defJ}.

If $x\in J_1(f)$ and $U$ is a neighborhood of~$x$,
 then, as already mentioned in the introduction,
$\RR\setminus O^+(U)$ is finite 
by Miniowitz's theorem and thus~\eqref{1c} holds. Hence $x\in J_2(f)$.

If $x\in F(f):=\RR\setminus J_1(f)$, then there exists
a neighborhood $U$ of~$x$ satisfying $U\subset F(f)$.
By the complete invariance of $F(f)$ and $J_1(f)$ we have 
$O^+(U)\subset F(f)$ and thus $\RR\setminus O^+(U)\supset J_1(f)$.
By the result of Fletcher and Nicks~\cite{FletcherNicks11}
already mentioned in the introduction, we have 
$\dim J_1(f)>0$ and thus $\dim\!\left( \RR\setminus O^+(U)\right)>0$.
Hence $\capacity\!\left( \RR\setminus O^+(U)\right)>0$
by Lemma~\ref{cap-dim}.
We conclude that $x\notin J_2(f)$. 

Altogether we see that $J_1(f)=J_2(f)$.
\end{proof}
\begin{proof}[Proof of Theorem \ref{thm3}]
It is easy to see that $J(f)\cap A(\xi)=\emptyset$.
Let $x\in J(f)$. 
Then $x\notin  A(\xi)$. Suppose now that 
 $x\notin  \partial A(\xi)$. Then there exists 
a neighborhood $U$ of $x$ such that $U\cap A(\xi)=\emptyset$
and thus $O^+(U)\cap A(\xi)=\emptyset$. 
In particular, $\RR\setminus O^+(U)\supset \overline{B}_\chi(\xi,\varepsilon)$
for some $\varepsilon>0$. Since $\overline{B}_\chi(\xi,\varepsilon)$ has
positive capacity, this is a contradiction.
\end{proof}

\section{Proof of Theorem~\ref{thm4}}\label{proofthm4}
The following result can be found in~\cite[Theorem~III.4.7]{Rickman93}.
\begin{lemma}\label{lemma-m70}
Let $\Omega\subset\R^n$ be open,
$f:\Omega\to\R^n$ be quasi\-regular and $x\in\Omega$.
Then there exists $A,B,r>0$ such that
\begin{equation}\label{locdist1}
A |x-y|^\nu \leq |f(x)-f(y)|\leq B|x-y|^\mu
\quad\text{for}\  
y\in B(x,r),
\end{equation}
 where
$\nu=(K_O(f)i(x,f))^{1/(n-1)}$ and
$\mu=(i(x,f)/K_I(f))^{1/(n-1)}$.
\end{lemma}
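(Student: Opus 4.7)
The plan is to prove the result by applying V\"ais\"al\"a's inequality (Lemma~\ref{mcf3}) to spherical ring path families centered at $x$ on the domain side and at $f(x)$ on the target side, working within a normal neighborhood of $x$. The result generalizes the observation that a holomorphic map with critical order $i-1$ at $x$ satisfies $|f(z)-f(x)|\asymp|z-x|^i$; here the H\"older exponents are shifted by factors involving the dilatations.

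First I would choose $r_0>0$ small enough that the $x$-component $U$ of $f^{-1}(B(f(x),r_0))$ is relatively compact in $\Omega$ and $f|_U:U\to B(f(x),r_0)$ is a proper $i$-to-one branched covering with $f^{-1}(f(x))\cap U=\{x\}$, where $i=i(x,f)$; this is possible by the discreteness and openness of quasi\-regular maps. Fix $r_1>0$ with $\overline{B}(x,r_1)\subset U$ and for $0<s<r_1$ set $L(s):=\max_{|z-x|=s}|f(z)-f(x)|$ and $l(s):=\min_{|z-x|=s}|f(z)-f(x)|$.

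For the upper bound I would consider the ring family $\Gamma^*:=\Delta(S(x,s),S(x,r_1);B(x,r_1)\setminus\overline{B}(x,s))$ of modulus $\omega_{n-1}(\log(r_1/s))^{1-n}$ by Lemma~\ref{mcf1} and~\eqref{capring}. Because $f|_U$ is $i$-valent, every path in the image family $\Gamma:=f\Gamma^*$ admits $i$ disjoint maximal lifts to $\Gamma^*$, so Lemma~\ref{mcf3} with $m=i$ gives $M(\Gamma)\le(K_I(f)/i)M(\Gamma^*)$. On the other hand, once $L(s)<l(r_1)$, by continuity each path in $\Gamma$ contains a subpath crossing the spherical shell $\{L(s)<|w-f(x)|<l(r_1)\}$, whence Lemma~\ref{mcf2} and~\eqref{capring} yield $M(\Gamma)\ge\omega_{n-1}(\log(l(r_1)/L(s)))^{1-n}$. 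Combining the two inequalities and rearranging produces $L(s)\le Bs^\mu$ with $\mu=(i/K_I(f))^{1/(n-1)}$ and $B=l(r_1)r_1^{-\mu}$.

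For the lower bound I would run a dual argument using the outer dilatation, applied on the target side: V\"ais\"al\"a's inequality has a symmetric version bounding the preimage path family of a target family by $iK_O(f)$ times the target family's modulus. Taking as target family the ring joining $S(f(x),l(s))$ to $S(f(x),L(r_1))$, whose full preimage under $f|_U$ contains the ring family joining $S(x,s)$ to $S(x,r_1)$, one obtains after rearrangement $l(s)\ge As^\nu$ with $\nu=(K_O(f)i)^{1/(n-1)}$. The main obstacle in both steps is justifying the crossing claim: one must exploit the normal-neighborhood property $f^{-1}(f(x))\cap U=\{x\}$ together with the openness of $f$ to ensure that $f(S(x,s))$ and $f(S(x,r_1))$ are genuinely separated by a full spherical shell around $f(x)$, so that each image path restricts to a subpath lying in that shell and Lemma~\ref{mcf2} can be invoked against the explicit annular modulus.
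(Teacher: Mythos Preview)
The paper does not prove this lemma; it cites it from Rickman~\cite[Theorem~III.4.7]{Rickman93}, attributing the right inequality to Martio~\cite{Martio70} and the left to Srebro~\cite{Srebro76}. So there is no paper proof to compare against directly, though the paper's own proof of the related Lemma~\ref{mcf6} in section~\ref{locdist} carries out essentially the relevant modulus computation.

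Your strategy is the right one, but the lifting step in the upper-bound argument is not justified and is where the proof breaks. You assert that every $\beta\in f\Gamma^*$ admits $i$ lifts in the annular family $\Gamma^*=\Delta(S(x,s),S(x,r_1);\ldots)$. Writing $\beta=f\circ\alpha$ with $\alpha\in\Gamma^*$, the path $\alpha$ itself is one such lift; but the other $i-1$ maximal lifts of $\beta$ in $U$ start at the remaining preimages of $\beta(0)\in f(S(x,s))$ under $f|_U$, and those preimages lie somewhere in $U$---not, in general, on $S(x,s)$. There is no reason the corresponding lifts should cross from $S(x,s)$ to $S(x,r_1)$, so Lemma~\ref{mcf3} with $m=i$ is not available for this pair of families. (For a holomorphic germ $f(z)\sim cz^i$ all preimages of a point near $f(x)$ sit at essentially the same distance from~$x$, which may be why the claim looks plausible; quasiregular maps have no such local normal form.) The ``obstacle'' you flag at the end---separating $f(S(x,s))$ from $f(S(x,r_1))$ by a full shell---is actually the easy part and does follow from $L(s)\to 0$; the genuine gap is the multiplicity $m=i$ in V\"ais\"al\"a's inequality. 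Your lower-bound sketch has the analogous defect: for $\alpha$ in the domain ring family, $f\circ\alpha$ starts in the shell $l(s)\le|w-f(x)|\le L(s)$ rather than on $S(f(x),l(s))$, so it is not in your target family as stated. The standard remedy, visible in the paper's proof of Lemma~\ref{mcf6}, is to replace Euclidean balls in the domain by the normal neighborhoods $U(x,f,t)$ and apply the condenser inequalities (Lemmas~\ref{mcf3a} and~\ref{mcfKO}): then $m=i$ holds on the nose because $f:\overline{U}(x,f,t)\to\overline{B}(f(x),t)$ is a proper map of degree~$i$.
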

The right inequality of~\eqref{locdist1} is due to
Martio~\cite{Martio70}.
It was one of the main tools used by Fletcher and Nicks~\cite{FletcherNicks}
in their study of quasi\-regular maps of polynomial type.
The left inequality of~\eqref{locdist1} is due to Srebro~\cite{Srebro76}.
It will not be needed in the sequel and is listed here only for 
completeness.

Lemma~\ref{lemma-m70} extends to the case where the domain and range
of $f$ are in~$\RR$, provided the Euclidean metric is replaced
by the chordal metric.
Clearly the number $r$ in Lemma~\ref{lemma-m70} depends on~$x$, as
the left inequality of~\eqref{locdist1}
 implies that $f$ is injective in $B(x,r)$.
In the proof, the dependence on $r$ comes in because
$f$ is considered in a normal neighborhood of~$x$.
This is, by definition, a  neighborhood $U$ 
compactly contained in $\Omega$ such that
$f(\partial U) =\partial f(U)$ and $U\cap f^{-1}(f(x))=\{x\}$.

For a quasi\-regular map $f:\RR\to \RR$ we put 
$B_f^*:=\{x\in\RR: i(x,f)=\deg(f)\}$.
For $x\in B_f^*$ we have $f^{-1}(f(x))=\{x\}$
and thus the last condition in the definition of a 
normal neighborhood is automatically satisfied.
Using this observation
and noting that $B_f^*$ is compact we can deduce from the
proof of Lemma~\ref{lemma-m70} in~\cite{Rickman93} that we may take
the same values~$A$, $B$ and $r$ for all $x\in  B_f^*$.
Thus we obtain the following result.
\begin{lemma}\label{lemma-m70a}
Let $f:\RR\to \RR$ be quasi\-regular.
Then there exists $A,B,r>0$ such that 
if $x\in B_f^*$,
then 
\begin{equation}\label{locdist2}
A \, \chi(x,y)^\nu \leq \chi(f(x),f(y))\leq B \, \chi(x,y)^\mu
\quad\text{for}\  
y\in B_\chi(x,r),
\end{equation}
where $\nu=(K_O(f)\deg(f))^{1/(n-1)}$ and
$\mu=(\deg(f)/K_I(f))^{1/(n-1)}$.
\end{lemma}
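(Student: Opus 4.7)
The plan is to examine the proof of Lemma~\ref{lemma-m70} in~\cite{Rickman93} and observe that the constants $A$, $B$, $r$ there depend on~$x$ only through (i)~the local index $i(x,f)$ and (ii)~the choice of a normal neighborhood of~$x$. Since $i(x,f)=\deg(f)$ for every $x\in B_f^*$, the exponents $\mu$ and $\nu$ in~\eqref{locdist2} are automatically constant on~$B_f^*$, and the entire task reduces to producing normal neighborhoods of uniform chordal size. A preliminary routine step is to pass from Euclidean to chordal balls: since stereographic projection is bi-Lipschitz on any chordal ball of radius bounded away from the diameter of $\RR$, the Euclidean form~\eqref{locdist1} and the chordal form~\eqref{locdist2} are equivalent after adjusting $A$ and~$B$ by absolute multiplicative factors.

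Next I would track the role of the normal neighborhood in the proof. Recall that a normal neighborhood of $x$ is a neighborhood $U$ compactly contained in $\Omega$ with $f(\partial U)=\partial f(U)$ and $U\cap f^{-1}(f(x))=\{x\}$. The right inequality of~\eqref{locdist1} is obtained by transporting a ring-shaped condenser surrounding $x$ inside $U$ across $f$ and invoking Lemma~\ref{mcf3a}; the resulting constant $B$ depends on the diameters of $U$ and $f(U)$, the distance from $x$ to~$\partial U$, the exponent $\mu$, and on $K_I(f)$, but not on $f$ restricted to $\RR\setminus U$. The key simplification for $x\in B_f^*$ is that $i(x,f)=\deg(f)$ forces $f^{-1}(f(x))=\{x\}$, so the condition $U\cap f^{-1}(f(x))=\{x\}$ holds automatically for every sufficiently small neighborhood of~$x$.

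It therefore suffices to produce $r_0>0$, uniform in $x\in B_f^*$, such that $B_\chi(x,r_0)$ is a normal neighborhood of~$x$. For small $\sigma>0$ and $x\in B_f^*$, let $V_\sigma(x)$ denote the $x$-component of $f^{-1}(B_\chi(f(x),\sigma))$; openness and discreteness of $f$, together with $f^{-1}(f(x))=\{x\}$, imply that the chordal diameter of $V_\sigma(x)$ tends to $0$ as $\sigma\to 0$. Using compactness of $B_f^*$ and a contradiction-and-subsequence argument (if uniform convergence failed one would obtain $x_k\to x^*\in B_f^*$ together with $y_k\in V_{\sigma_k}(x_k)$ such that $\chi(x_k,y_k)$ stays bounded below while $\chi(f(y_k),f(x_k))\to 0$, contradicting $f^{-1}(f(x^*))=\{x^*\}$), one obtains $\sigma_0,r_0>0$ independent of~$x$ with $V_{\sigma_0}(x)\subset B_\chi(x,r_0)$ and with the closure of $B_\chi(x,r_0)$ contained in $V_{2\sigma_0}(x)$ for every $x\in B_f^*$. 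Consequently $B_\chi(x,r_0)$ is a normal neighborhood of~$x$, and the remaining constants in Rickman's modulus estimate depend continuously on this uniform data, hence admit uniform bounds.

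The hard part is this uniform shrinking $\diam V_\sigma(x)\to 0$ as $\sigma\to 0$, $x\in B_f^*$; everything else is bookkeeping of constants in a modulus/capacity inequality. The left inequality of~\eqref{locdist2} would follow by the same strategy applied to Srebro's argument in place of Martio's, although, as the authors remark, it is not actually required for the sequel.
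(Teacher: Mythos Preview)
Your approach is essentially identical to the paper's: the paper also derives the lemma by observing that $f^{-1}(f(x))=\{x\}$ for $x\in B_f^*$ makes the normal-neighborhood condition automatic, and then invokes compactness of $B_f^*$ to obtain uniform constants from the proof of Lemma~\ref{lemma-m70} in~\cite{Rickman93}. You have simply made explicit the compactness/subsequence argument that the paper leaves implicit; one small slip is that $B_\chi(x,r_0)$ itself need not be a normal domain---what your sandwich $V_{\sigma_0}(x)\subset B_\chi(x,r_0)\subset V_{2\sigma_0}(x)$ actually gives (and what Rickman's estimate needs) is a normal domain $V_{\sigma_0}(x)$ containing and contained in chordal balls of uniform radius.
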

We shall only need the right inequality of~\eqref{locdist2}.
This inequality can also be deduced from Corollary~\ref{cormar} below
which says that for $x$ in a compact subset of $\Omega$ one
may choose $B$ and $r$ in the right inequality of~\eqref{locdist1}
independent of~$x$.

\begin{proof}[Proof of Theorem \ref{thm4}]
Let  $x\in E(f)$. 
Since 
$\card f^{-1}(A)\geq \card A$
for every finite subset $A$ of $\RR$ and 
$ f^{-1}(O^-(x)\cup \{x\})=O^-(x)$
we have
$$\card O^-(x)
=\card (f^{-1}(O^-(x)\cup \{x\}))
\geq \card (O^-(x)\cup \{x\})$$
and thus $x\in O^-(x)$.
Hence $x$ is periodic.

Moreover, the argument shows that $\card f^{-1}(y)=1$ for all $y\in E(f)$.
Choosing $p\in\N$ such that $f^p(x)=x$ we thus have
$f^{-p}(x)=\{x\}$.
This implies that
$i(x,f^p)=\deg(f^p)$.
With $B$ and $r$ as in Lemma~\ref{lemma-m70a}, applied to $f^p$,
we obtain
$$\chi(f^p(y),x)=\chi(f^p(y),f^{p}(x))\leq B\,\chi(y,x)^\mu$$
for $\chi(y,x)< r$,
where 
$$\mu=
\left(\frac{\deg(f^p)}{K_I(f^p)}\right)^{1/(n-1)}.
$$
Since $\deg(f^p)=\deg(f)^p>K_I(f)^p\geq K_I(f^p)$
by~\eqref{KIfg} 
we have $\mu>1$.
Hence there exists $\delta>0$ depending only on~$B$, $r$ and $\mu$ such that
$$\chi(f^p(y),x)\leq\frac{1}{2} \chi(y,x)$$ 
for $\chi(y,x)\leq \delta$.
Thus 
$x$ is an attracting periodic point.
Moreover, we have $B_\chi(x,\delta)\subset A(x)$,
which implies that the chordal distance between two points in $E(f)$ 
is at least $\delta$. Thus  $E(f)$ is finite.
\end{proof}

\section{Hausdorff measure of invariant sets}\label{hausdorffinv}
Let  $\eta>0$.
 An increasing, continuous function 
$h:(0,\eta]\to (0,\infty)$ 
satisfying
$\lim_{t\to 0}h(t)=0$ 
is called  a \emph{gauge function}.
For a set $X\subset \R^n$ and a gauge function $h$
the \emph{Hausdorff measure} $H_h(X)$
is defined by
\[
H_h(X):=\lim_{\delta\to 0} \inf_{(X_j)}\sum^\infty_{j=1}\;h(\mbox{diam}\ X_j),
\]
where the infinimum is taken over all sequences $(X_j)$ 
of subsets of $\R^n$  such that
$X\subset \bigcup^\infty_{j=1}X_j$ and $\diam X_j<\delta$
for all~$j$.
The $d$-dimensional Hausdorff measure $H^d(X)$ considered already
corresponds to the function $h(t)=t^d$, up to a normalization factor.

Recall that for  $X\subset \R^n$ and 
$f:X\to \R^n$
an increasing, continuous function $\omega:[0,\infty)\to [0,\infty)$ is called 
a \emph{modulus of continuity} for $f$ if $\omega(0)=0$ and
$|f(x)-f(y)|\leq \omega(|x-y|)$ for all $x,y\in X$.
If
this holds with
$\omega(t)=L t^\alpha$ where $L,\alpha>0$,
then $f$
is said to be \emph{H\"older continuous} with exponent $\alpha$
and in the special case that $\alpha=1$ we say that $f$ is
\emph{Lipschitz continuous} with 
\emph{Lipschitz constant}~$L$. 
Identifying $\RR$ with $S^n(1)\subset\R^{n+1}$ we also use
this terminology for $X\subset \RR$ and $f:X\to\RR$.
(Equivalently, we can replace the Euclidean metric by the
chordal metric in the definition of the modulus of continuity.)
\begin{theorem} \label{Hhinv}
Let $X\subset \R^n$ be compact and let $f:X\to X$ be a continuous
function with modulus of continuity $\omega$.

Suppose that there exists $m\in\N$, $m\geq 2$, and $\delta>0$
such that each $y\in X$ has $m$ preimages $x_1,\dots,x_m$ satisfying
$|x_i-x_j|\geq\delta$ for $i\neq j$.
If $h$ is a gauge function such that
$\omega^k(h^{-1}(1/m^k))\leq \delta/2$ for all large~$k$,
then $H_h(X)>0$.
\end{theorem}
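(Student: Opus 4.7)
My plan is to build a probability measure on $X$ from a Cantor-type tree of preimages and then apply the mass distribution principle. Fix any $y_0\in X$ and recursively construct a tree $T$: each node at level $j$ is assigned $m$ children chosen from pairwise $\delta$-separated preimages under $f$, as guaranteed by the hypothesis. Since $f$ is a function, preimage sets of distinct nodes are disjoint, and $\delta$-separation makes siblings distinct, so the level-$k$ set $P_k$ satisfies $|P_k|=m^k$ and $P_k\subset X$. Define $\mu_k := m^{-k}\sum_{x\in P_k}\delta_x$. By weak-$\ast$ compactness, some subsequence $\mu_{k_\ell}$ converges weakly to a probability measure $\mu$ on $X$.

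The core estimate is that $\mu(A)\le h(\diam A)$ for every set $A$ of sufficiently small diameter. To prove it, I first bound $\mu_k(U)$ for open $U$ with $\diam U = d$ small. Take two distinct $x,x'\in U\cap P_k$ and let their \emph{first merging time} be the smallest $s\ge 1$ with $f^s(x)=f^s(x')$. Then $f^{s-1}(x)$ and $f^{s-1}(x')$ are distinct siblings in $T$, hence $\delta$-separated, so iterating the modulus of continuity gives
\[
\delta \le |f^{s-1}(x)-f^{s-1}(x')| \le \omega^{s-1}(|x-x'|) \le \omega^{s-1}(d).
\]
Setting $\tau(d):=\min\{\tau\ge 0:\omega^\tau(d)\ge\delta\}$ (possibly $+\infty$), this forces $s\ge \tau(d)+1$, so the lowest common ancestor of $x,x'$ in $T$ sits at level at most $k-\tau(d)-1$. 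Consequently each subtree of $T$ rooted at level $k-\tau(d)$ contains at most one point of $U\cap P_k$, giving $\mu_k(U) \le m^{-\min(\tau(d),k)}$; the degenerate case $\tau(d)>k$ just gives $|U\cap P_k|\le 1$.

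To translate this into a gauge bound, set $k(d):=\lfloor -\log_m h(d)\rfloor$, so $d \le h^{-1}(m^{-k(d)})$. For $d$ small enough that $k(d)$ exceeds the threshold in the hypothesis, monotonicity of $\omega$ yields
\[
\omega^{k(d)}(d) \le \omega^{k(d)}\!\left(h^{-1}(m^{-k(d)})\right) \le \delta/2 < \delta,
\]
hence $\tau(d) \ge k(d)+1$. For $k \ge k(d)+1$ it then follows that $m^{-\min(\tau(d),k)} \le m^{-k(d)-1} \le h(d)$, using $m^{-k(d)} \le m\,h(d)$ from the definition of the floor. Passing to the weak limit, $\mu(U) \le \liminf_\ell \mu_{k_\ell}(U) \le h(d)$ for every open $U$ of small diameter $d$; approximating an arbitrary $A$ from outside by open $\epsilon$-neighborhoods (whose diameters exceed $\diam A$ by at most $2\epsilon$) and invoking continuity of $h$ then gives $\mu(A) \le h(\diam A)$.

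Finally, for any cover $\{E_j\}$ of $X$ with $\diam E_j$ uniformly small, countable subadditivity of $\mu$ yields
\[
1 = \mu(X) \le \sum_j \mu(E_j) \le \sum_j h(\diam E_j),
\]
so $H_h(X) \ge 1 > 0$. I expect the main technical obstacle to be the first-merging-time / lowest-common-ancestor bookkeeping in $T$ --- including the edge case $\tau(d) > k$ --- together with verifying that the threshold $\tau(d)$ is correctly calibrated against the gauge $h$ by the hypothesis $\omega^k(h^{-1}(1/m^k)) \le \delta/2$.
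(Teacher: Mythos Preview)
Your proof is correct and follows essentially the same strategy as the paper: construct the $m$-ary tree of $\delta$-separated preimages, form the uniform measures $\mu_k$ on the level-$k$ sets, pass to a weak-$*$ limit, and apply the mass distribution principle. The only difference is cosmetic bookkeeping in the key estimate: the paper iterates the one-step inequality $\mu_{k+1}(B(x,r))\le m^{-1}\mu_k(B(f(x),\omega(r)))$ (valid for $r\le\delta/2$) until the radius exceeds $\delta/2$, whereas you obtain the same bound in one shot via the first-merging-time / lowest-common-ancestor count; both arguments unwind to the identical observation that a set of diameter $<\delta$ can contain at most one sibling from the tree.
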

We give two corollaries dealing with Lipschitz and H\"older
continuous maps.
\begin{cor}\label{corHh1}
Let $X\subset \R^n$ be compact and let $f:X\to X$ be continuous
such that each $y\in X$ has $m$ preimages $x_1,\dots,x_m$ satisfying
$|x_i-x_j|\geq\delta$ for $i\neq j$.
If $f$ satisfies a Lipschitz condition with Lipschitz constant~$L>1$, then
$$\dim X\geq \frac{\log m}{\log L}.$$
\end{cor}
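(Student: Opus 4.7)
The plan is to derive Corollary \ref{corHh1} directly from Theorem \ref{Hhinv} by choosing the gauge function $h(t)=t^d$ and letting $d$ approach $\log m/\log L$ from below. Since $f$ is Lipschitz with constant $L$, a natural modulus of continuity is $\omega(t)=Lt$, and hence $\omega^k(t)=L^kt$ by straightforward iteration.

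Fix $d$ with $0<d<\log m/\log L$ (this is possible because $L>1$ and $m\geq 2$, so the bound is positive). Put $h(t)=t^d$, so that $h^{-1}(s)=s^{1/d}$. Then
\[
\omega^k\!\left(h^{-1}(1/m^k)\right)=L^k\cdot m^{-k/d}=\left(\frac{L}{m^{1/d}}\right)^k.
\]
The condition $d<\log m/\log L$ is equivalent to $m^{1/d}>L$, so the base $L/m^{1/d}$ is strictly less than $1$ and the expression above tends to $0$ as $k\to\infty$. In particular, $\omega^k(h^{-1}(1/m^k))\leq\delta/2$ for all sufficiently large~$k$, which is precisely the hypothesis required to invoke Theorem \ref{Hhinv}.

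Theorem \ref{Hhinv} then yields $H_h(X)>0$ for this choice of $h$. Since $h(t)=t^d$ corresponds (up to a normalization constant) to the $d$-dimensional Hausdorff measure, we conclude that $H^d(X)>0$ and therefore $\dim X\geq d$. Letting $d\nearrow \log m/\log L$ gives the desired inequality $\dim X\geq \log m/\log L$.

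There is no real obstacle here: the only point requiring a moment's thought is checking the inequality $m^{1/d}>L\Leftrightarrow d<\log m/\log L$ in the right direction so that the geometric ratio $L/m^{1/d}$ is less than one. The rest is routine bookkeeping and a supremum-over-$d$ argument.
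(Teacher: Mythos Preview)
Your proof is correct and follows essentially the same approach as the paper: both apply Theorem~\ref{Hhinv} with $\omega(t)=Lt$ and a power-law gauge function $h(t)=c\,t^d$. The only difference is cosmetic: the paper takes $d=\log m/\log L$ directly and chooses the constant $c=(2/\delta)^d$ so that $\omega^k(h^{-1}(1/m^k))=\delta/2$ holds with equality for every~$k$, thereby obtaining $H^{\log m/\log L}(X)>0$ in one stroke without the limiting argument over~$d$.
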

\begin{cor}\label{corHh2}
Let $X\subset \R^n$ be compact and let $f:X\to X$ be continuous
such that each $y\in X$ has $m$ preimages $x_1,\dots,x_m$ satisfying
$|x_i-x_j|\geq\delta$ for $i\neq j$.
If $f$ satisfies a H\"older condition with exponent $\alpha<1$,
then $H_h(X)>0$ for
$$
h(t)
=
\left(\log\frac{1}{t}\right)^{(\log m)/(\log \alpha)}.
$$
\end{cor}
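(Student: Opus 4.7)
The natural plan is to apply Theorem~\ref{Hhinv}, but first the multiplicative constant in the H\"older condition has to be absorbed into the gauge function. Write the H\"older condition as $|f(x)-f(y)|\le L|x-y|^\alpha$, so that $\omega(t)=Lt^\alpha$ is a modulus of continuity for $f$. A direct induction on $k$ gives
\[
\omega^k(t)=L^{(1-\alpha^k)/(1-\alpha)}t^{\alpha^k}\le L^{1/(1-\alpha)}t^{\alpha^k},
\]
so the iteration of $\omega$ carries a bounded prefactor and a doubly-exponentially shrinking exponent on $t$.

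Set $c=(\log m)/(\log\alpha)$; since $m\ge 2$ and $0<\alpha<1$ one has $c<0$. Pick a parameter $A>0$ (to be fixed below) and introduce the auxiliary gauge function
\[
\tilde h(t):=\left(\frac{1}{A}\log\frac{1}{t}\right)^c = A^{-c}\, h(t).
\]
Since $\tilde h$ is a positive constant multiple of $h$, the definition of Hausdorff measure gives $H_{\tilde h}(X)=A^{-c}H_h(X)$; hence it suffices to prove $H_{\tilde h}(X)>0$.

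Next I would invert $\tilde h$ at the relevant scales. Solving $\tilde h(t)=1/m^k$ and using $(-\log m)/c=-\log\alpha$ yields $\tilde h^{-1}(1/m^k)=\exp(-A\alpha^{-k})$. Consequently
\[
\omega^k\!\left(\tilde h^{-1}(1/m^k)\right)
\le L^{1/(1-\alpha)}\exp(-A\alpha^{-k})^{\alpha^k}
= L^{1/(1-\alpha)}e^{-A}.
\]
Now choose $A$ large enough that $L^{1/(1-\alpha)}e^{-A}\le \delta/2$. Then the hypothesis of Theorem~\ref{Hhinv} is verified with gauge $\tilde h$, yielding $H_{\tilde h}(X)>0$ and hence $H_h(X)>0$.

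The computation itself is routine; the only real point to notice is that one \emph{cannot} simply plug $h$ into Theorem~\ref{Hhinv} and win, because the residual constant $L^{1/(1-\alpha)}/e$ coming from the iterated H\"older condition may exceed $\delta/2$. The trick is that a one-parameter family of gauge functions $\tilde h=A^{-c}h$, which are all equivalent to $h$ in the sense of Hausdorff measure, has $\tilde h^{-1}(1/m^k)=\exp(-A\alpha^{-k})$, so tuning $A$ arbitrarily enlarges the decay rate of the inverse while keeping $\omega^k$ controlled. That scaling flexibility is what absorbs all unknown multiplicative constants and reduces Corollary~\ref{corHh2} to Theorem~\ref{Hhinv}.
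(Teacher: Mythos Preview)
Your proof is correct and follows essentially the same route as the paper's: the paper also introduces the rescaled gauge $h_2(t)=\bigl(\tfrac{1}{b}\log\tfrac{1}{t}\bigr)^{(\log m)/(\log\alpha)}$ with $b=\log(2L^{1/(1-\alpha)}/\delta)$, computes $h_2^{-1}(1/m^k)=\exp(-b\alpha^{-k})$, and checks $\omega^k(h_2^{-1}(1/m^k))\le\delta/2$ before invoking Theorem~\ref{Hhinv}. Your parameter $A$ plays exactly the role of the paper's $b$, and you make explicit the (trivial but necessary) observation that $\tilde h$ is a constant multiple of $h$, so the two Hausdorff measures vanish together.
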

For the proof of Theorem~\ref{Hhinv} we need the following 
version of the so-called mass distribution principle; 
see~\cite[Theorem~7.6.1]{Przytycki10}.
\begin{lemma} \label{la6}
Let $X\subset \R^n$ be compact and let 
$h$
be a gauge function.
Suppose that there exist a
probability measure $\mu$ supported on $X$ 
and $c,\eta>0$ 
such that 
$\mu(B(x,r))\leq c\,h(r)$ for $0<r\leq\eta$ and all $x\in X$.
Then $H_h(X) >0$.
\end{lemma}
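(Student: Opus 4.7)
The plan is to prove this classical mass distribution principle by directly comparing the measure $\mu$ against the Hausdorff pre-measure defining $H_h$. Given any countable cover $(X_j)$ of $X$ by sets with $\diam X_j < \delta \leq \eta$, the goal is to establish $\sum_j h(\diam X_j) \geq 1/c$; taking the infimum over all such covers and then letting $\delta \to 0$ yields $H_h(X) \geq 1/c > 0$.

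The crux is to bound $\mu(X_j)$ by $c\,h(\diam X_j)$ for each $j$. Covering sets disjoint from $X$ contribute nothing to $\sum_j \mu(X_j)$ and can be ignored. For each remaining $j$, I would choose a point $x_j \in X_j \cap X$, so that $X_j \subset \overline{B}(x_j, \diam X_j)$. To convert the hypothesis, which bounds $\mu$ on open balls, into a bound on closed balls, I would take a decreasing sequence $r_n \downarrow \diam X_j$ and apply continuity of $\mu$ from above (valid since $\mu$ is finite) together with continuity of $h$:
$$
\mu\!\left(\overline{B}(x_j,\diam X_j)\right)
=\lim_{n\to\infty}\mu(B(x_j,r_n))
\leq \lim_{n\to\infty} c\,h(r_n)
= c\,h(\diam X_j).
$$
Hence $\mu(X_j)\leq c\,h(\diam X_j)$. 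Summing over $j$ and using that $\mu$ is a probability measure supported on $X\subset\bigcup_j X_j$ gives
$$
1=\mu(X)\leq \sum_j \mu(X_j)\leq c\sum_j h(\diam X_j),
$$
which is the desired lower bound.

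I expect no serious obstacle; the argument is essentially formal. The only minor subtleties are the passage from open to closed balls, handled via the monotone continuity above, and the treatment of covering sets of zero diameter, which are singletons of $\mu$-measure zero and may either be absorbed into slightly enlarged neighbouring sets or omitted outright without affecting the sum.
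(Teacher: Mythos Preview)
Your argument is correct and is the standard proof of the mass distribution principle. The paper does not give its own proof of this lemma but merely cites \cite[Theorem~7.6.1]{Przytycki10}; your proof is precisely the classical argument found there, so there is nothing further to compare.
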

\begin{proof}[Proof of Theorem~\ref{Hhinv}]
For each finite subset $E_0$ of $X$ we can choose a finite
subset $E_1$ of $f^{-1}(E_0)$ such that 
each point in $E_0$ has $m$ preimages in $E_1$, with
$|x-x'|\geq \delta$ if $x,x'\in E_1$ with $f(x)=f(x')$.
Clearly, $\card E_1= m\card E_0$.
Beginning with 
$E_0=\{y\}$ for some fixed $y\in X$
and performing this process repeatedly we obtain a sequence $(E_k)$ 
of sets with $\card E_k=m^k $ such that each point
in $E_{k-1}$ has $m$ preimages in $E_k$, with
$|x-x'|\geq \delta$ if $x,x'\in E_k$ with $f(x)=f(x')$.
We denote by $\delta_x$ the Dirac measure at a point $x$
and,  for $k\geq 0$, define the measure $\mu_k$ by
$$
\mu_k := \frac{1}{m^k}\sum_{x\in E_k } \delta_x.
$$
The sequence $(\mu_k)$ has a subsequence which
converges with respect to the weak$^*$-topology, say
$\mu_{k_j}\to \mu$; see, e.g., \cite[Theorem~6.5]{Walters}.

For $x\in X$ and $0<r\leq \delta/2$ we have
$$\mu_{k+1}(B(x,r))\leq \frac{1}{m}\mu_{k}(B(f(x),\omega(r))).$$
Thus
$$\mu_{k+l}(B(x,r))\leq \frac{1}{m^l}\mu_{k}(B(f^l(x),\omega^l(r)))$$
for $l\in\N$ as long as $\omega^{l-1}(r)\leq \delta/2$.

If $\omega^j(r)\leq \delta/2$ for all $j\in\N$, then
$\mu(B(x,r))=0$ for all $x\in X$ and thus
$\mu\equiv 0$, which is a contradiction.
Thus there exists $j\in\N$ depending on $r$  such that 
$\omega^{j-1}(r)\leq\delta/2 < \omega^{j}(r)$.
Denoting by $\tau$ the inverse function of $\omega$ we thus have
$\tau^{j}(\delta/2)<r\leq \tau^{j-1}(\delta/2)$.
Note that $j$ is large when $r$ is small.
By hypothesis we thus have
$\omega^{j}(h^{-1}(1/m^{j}))\leq \delta/2$
and hence $h(\tau^{j}(\delta/2))\geq 1/m^{j}$ for small~$r$,
say for $0<r\leq \eta$.
For $k\geq j$ we thus obtain
$$
\mu_{k}(B(x,r))
\leq \frac{1}{m^{j}}\mu_{k-j}(B(f^j(x),\omega^j(r)))
\leq \frac{1}{m^{j}}
\leq h(\tau^{j}(\delta/2))
\leq h(r).
$$
We conclude that 
$\mu(B(x,r))\leq  h(r)$ for $x\in X$ 
and $0<r\leq \eta$ so that the conclusion
follows from Lemma~\ref{la6}.
\end{proof}
\begin{proof}[Proof of Corollaries~\ref{corHh1} and~\ref{corHh2}]
Let $\omega(t)=L\,t^\alpha$. 
By induction we find that 
$$\omega^k(t)={L}^{p_k}t^{\alpha^k}
\quad\text{with}\ p_k=\sum_{j=0}^{k-1}\alpha^j$$
for $k\in\N$.

First we consider the case that $\alpha=1$. 
Then $p_k=k$ so that $\omega^k(t)=L^k t$.
Define
$h_1(t):=(2t/\delta)^{(\log m)/(\log L)}$.
Then 
$$
h_1^{-1}\!\left(\frac{1}{m^k}\right)
=\frac{\delta}{2}
\left(\frac{1}{m^k}\right)^{(\log L)/(\log m)}
=\frac{\delta}{2L^k} .$$ 
Hence $\omega^k(h_1^{-1}(1/m^k))=\delta/2$.
Thus $H_{h_1}(X)>0$ by Theorem~\ref{Hhinv}, and Corollary~\ref{corHh1}
follows.

Now we consider the case that $\alpha<1$.
Then $p_k=(1-\alpha^{k})/(1-\alpha)\sim 1/(1-\alpha)$
as $k\to\infty$ so that $\omega^k(t)\leq c\, t^{\alpha^k}$ for some
$c>0$.
With 
$b:= \log(2c/\delta)$
we now define 
$$
h_2(t)
:=
\left(\frac{1}{b} \log\frac{1}{t}\right)^{(\log m)/(\log \alpha)}.
$$
Then $h_2^{-1}(t)=\exp\!\left(-b\, t^{(\log  \alpha)/(\log m)}\right)$
and thus
$$
h_2^{-1}\!\left(\frac{1}{m^k}\right)
= \exp\!\left(-b \left(\frac{1}{m^k}\right)^{(\log  \alpha)/(\log m)}\right)
=\exp\!\left(-b  \alpha^{-k}\right).
$$
Hence $\omega^k(h_2^{-1}(1/m^k))\leq c\, e^{-b}= \delta/2$.
Thus $H_{h_2}(X)>0$ by Theorem~\ref{Hhinv}, and
Corollary~\ref{corHh2} follows.
\end{proof}

\section{Proof of Theorems~\ref{thm5}--\ref{thm8}}\label{proofthm5}
\begin{proof}[Proof of Theorem \ref{thm5}]
Suppose that 
there exists an open set $U$ intersecting $J(f)$ such
that $O^+(U)\not\supset \RR\setminus E(f)$.
Then there exists $z\in\RR\setminus (O^+(U)\cup E(f))$.
We note that
$O^-(z)$ is infinite since $z\notin E(f)$. Let $X$ be the set of 
limit points of $O^-(z)$. Then $X$ is a non-empty, closed
and completely invariant subset of $\RR\setminus O^+(U)$. Moreover,
$X\cap E(f)=\emptyset$ by Theorem~\ref{thm4}.

First we show that for each $x\in X$ there exist $m\in \N$
such that $f^{-m}(x)$ contains at least two points.
Otherwise there exist $y_0\in X$ such that 
for all $k\in\N$ we have
$f^{-k}(y_0)=\{y_k\}$
for some $y_k\in X$.
With $B_f^*$ as in Lemma~\ref{lemma-m70a} we find that
$y_k\in B_f^*$ for all $k\in\N$.
It follows from Lemma~\ref{lemma-m70a} that 
there exists $\delta>0$ such that
if $x\in B_f^*$ and $y\in\RR$ with $\chi(x,y)<\delta$,
then $\chi(f(x),f(y))\leq \chi(x,y)/2<\delta/2$.
Now there exists $N\in \N$ such that if $x_1,\dots,x_N\in \RR$, 
then there exist $k,l\in\{1,\dots,N\}$ with $k\neq l$
such that $\chi(x_k,x_l)<\delta$. 
In particular, for each $m\in \N$ there exist
$k,l\in\{1,\dots,N\}$ with $k\neq l$  such that
$\chi(y_{m+k},y_{m+l})<\delta$.
It follows that 
\begin{equation}\label{p5a}
\chi(y_k,y_l)=\chi(f^m(y_{m+k}),f^m(y_{m+l}))
\leq \frac{1}{2^m}\chi(y_{m+k},y_{m+l})<\frac{1}{2^m}\delta.
\end{equation}
On the other hand,  since $y_0\notin E(f)$, all the points
$y_k$ are distinct and thus 
\begin{equation}\label{p5b}
\eta:=\min_{1\leq k<l\leq N}\chi(y_k,y_l)>0.
\end{equation}
Choosing $m$ such that $2^m>\delta/\eta$ we obtain a contradiction
from~\eqref{p5a} and~\eqref{p5b}.
Thus  for each $x\in X$ there exist $m\in \N$
such that $f^{-m}(x)$ contains at least two points.

Noting that  $f$ is an open map we deduce that for every $x\in X$ 
there exist $m(x)\in \N$, $\delta(x)>0$ and 
a neighborhood $U(x)$ of $x$ such that if $y\in U(x)$, then
$f^{-m(x)}(y)$ contains two points whose chordal distance
is at least $\delta(x)$. The compact set $X$ can be
covered by finitely many such neighborhoods, say
$X\subset \bigcup_{j=1}^k U(x_j)$.
Let $m:=\max_j m(x_j)$. 
Since $f$ and its iterates are continuous 
there exists $\delta>0$ such that
$\chi(f^{m-m(x_j)}(x),f^{m-m(x_j)}(y))<\delta(x_j)$ for 
$x,y\in X$ with $\chi(x,y)<\delta$ and $j\in\{1,\dots,k\}$.
We find that for each $x\in X$ the preimage $f^{-m}(x)$ 
contains two points whose chordal distance
is at least~$\delta$.

It now follows from Corollary~\ref{corHh1}, which we may apply 
by considering the subset $X$ of $\RR$ as a subset of $\R^{n+1}$,
that $\dim X>0$.
Thus $\capacity X>0$ by Lemma~\ref{cap-dim}.
On the other hand, since $X\subset \RR\setminus O^+(U)$, we 
have $\capacity X=0$ by Theorem~\ref{thm1}.
This is a contradiction.

Hence 
$O^+(U)\supset \RR\setminus E(f)$ if  $U$ is 
an open set intersecting $J(f)$.
Since $J(f)$ is completely invariant and 
$J(f)\cap E(f)=\emptyset$ by Theorem~\ref{thm4}, we also
deduce that 
$O^+(U\cap J(f))=J(f)$.
\end{proof}
\begin{proof}[Proof of Theorem \ref{thm7}]
Let $x\in \RR\setminus E(f)$ and suppose that 
$J(f)\not \subset \overline{O^-(x)}$ so that there exists
$y\in J(f)\setminus \overline{O^-(x)}$. Then 
$y$ has a neighborhood $U$ 
satisfying
$U\cap \overline{O^-(x)}=\emptyset$.
It follows that $O^+(U)\cap \overline{O^-(x)}=\emptyset$.
Thus $O^-(x)\subset E(f)$ by Theorem~\ref{thm5}.
Hence $x\in E(f)$, which 
is a contradiction.
We deduce that $J(f)\subset \overline{O^-(x)}$
for $x\in \RR\setminus E(f)$.

Since $J(f)\cap E(f)=\emptyset$ by Theorem~\ref{thm4} we 
deduce that $J(f)\subset \overline{O^-(x)}$ holds in particular
for $x\in J(f)$. On the other hand, if 
$x\in J(f)$, then $O^-(x)\subset J(f)$ since $J(f)$ is 
completely invariant and hence $\overline{O^-(x)}\subset J(f)$ 
since $J(f)$ is closed.
It follows that $\overline{O^-(x)}= J(f)$ for $x\in J(f)$.
\end{proof}
\begin{proof}[Proof of Theorem \ref{thm6}]
We use the same method 
as in the proof of Theorem~\ref{thm5}.
In fact, here the argument is even a little simpler.

Noting again that $J(f)\cap E(f)=\emptyset$ by Theorem~\ref{thm4} 
we see as in the proof of Theorem~\ref{thm5} 
that there exist
$m\in\N$ and $\delta>0$ such that for each $x\in J(f)$ the
preimage $f^{-m}(x)$ contains two points whose chordal
distance is at least~$\delta$.
Corollary~\ref{corHh1} now yields the conclusion.
\end{proof}
The proof of Theorem~\ref{thm8} requires
the following strengthening
 of Lemma~\ref{cap-dim}; see~\cite{Wallin77}.
\begin{lemma}\label{cap-haus}
Let $X\subset\R^n$ be compact and $\eps>0$.
If
$H_h(X)>0$
for
\begin{equation} \label{chh}
h(t) =
\left(\log\frac{1}{t}\right)^{1-n-\eps},
\end{equation}
then $\capacity X>0$.
\end{lemma}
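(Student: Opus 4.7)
The plan is to argue by contrapositive: assume $\capacity X=0$ and deduce that $H_h(X)=0$ for the logarithmic gauge $h(t)=(\log(1/t))^{1-n-\eps}$.

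After scaling I may assume $X\subset \overline{B}(1/2)$, so by the definition of capacity zero, $\capacity(B(1),X)=0$. By Lemma~\ref{mcf1} this is the same as saying $M(\Gamma)=0$ for the path family $\Gamma=\Delta(X,S(1);B(1))$, so for every $\eta>0$ one can choose an admissible Borel function $\rho\in\mathcal{F}(\Gamma)$ with $\int_{\R^n}\rho^n\,dm<\eta$. The goal is to use $\rho$ to build an efficient cover of $X$ by small balls.

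The engine of the argument is a quantitative lower bound in the spirit of the Gr\"otzsch estimate used in the proof of Lemma~\ref{mcf5}: if $E\subset \overline{B}(x,r)$ is a compact set meeting $X$, then for sufficiently small $r$,
\[
\capacity\!\left(B(x,2r),E\right)\;\geq\; c_n\left(\log\frac{1}{r}\right)^{1-n},
\]
since any path from $E$ to the outer sphere must cross an annulus that forces enough modulus on $\rho$. The first key step is to apply a Whitney-type (or Besicovitch-type) decomposition of a small neighborhood of $X$ to extract, for any prescribed upper scale $r_0$, a countable cover $\{B(x_i,r_i)\}$ of $X$ with $r_i\leq r_0$ and, by the subadditivity of $\rho^n\,dm$ across essentially disjoint balls together with the local lower bound above,
\[
\sum_i \left(\log\frac{1}{r_i}\right)^{1-n}\;\leq\; C_n \int_{\R^n}\rho^n\,dm \;<\; C_n\eta.
\]
Concretely, on each ball $B(x_i,2r_i)$ the admissible $\rho$ integrates to $\geq 1$ along any path from $X\cap \overline{B}(x_i,r_i)$ to $S(x_i,2r_i)$, so its $L^n$ mass on that ball is bounded below by the local capacity.

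Once such a cover is in hand, the passage to the gauge $h$ is immediate:
\[
\sum_i h(\diam B(x_i,r_i)) \;=\; \sum_i \left(\log\frac{1}{2r_i}\right)^{1-n-\eps}
\;\leq\; \left(\log\frac{1}{2r_0}\right)^{-\eps}\!\!\sum_i \left(\log\frac{1}{2r_i}\right)^{1-n},
\]
where the last sum is again $\leq C_n'\eta$ by the previous step. Letting $r_0\to 0$ (with $\eta$ fixed, then $\eta\to 0$) shows the $h$-premeasures tend to $0$, hence $H_h(X)=0$.

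The main obstacle is the middle step: producing the covering $\{B(x_i,r_i)\}$ so that the weighted sum is genuinely controlled by $\int\rho^n\,dm$. One needs the covering to have bounded overlap and scales small enough that the local Gr\"otzsch bound applies, which is exactly Wallin's combinatorial lemma from \cite{Wallin77}. A slightly slicker variant is to perform the cover on a geometric sequence of scales $r=2^{-k}$ and use a dyadic coarea decomposition of $\{\rho>0\}$, but in either approach this is the real content; the rest is just bookkeeping of the two exponents $1-n$ and $1-n-\eps$.
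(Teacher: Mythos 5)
Your contrapositive strategy targets a true statement, but the engine of the argument fails at exactly the point that carries all the content. Under your standing assumption $\capacity X=0$, every compact piece $E=X\cap\overline{B}(x_i,r_i)$ also has capacity zero (and is totally disconnected), so there is no lower bound of the form $\capacity\!\left(B(x_i,2r_i),E\right)\geq c_n\left(\log(1/r_i)\right)^{1-n}$: the Gr\"otzsch-type estimate invoked from the proof of Lemma~\ref{mcf5} applies to a \emph{continuum} of diameter comparable to the ring, not to an arbitrary compact set meeting $X$ (a single point already gives capacity zero). Independently, the claim that the admissible $\rho$ for $\Gamma=\Delta(X,S(1);B(1))$ ``integrates to $\geq 1$ along any path from $X\cap\overline{B}(x_i,r_i)$ to $S(x_i,2r_i)$'' is unjustified: admissibility constrains full paths from $X$ to $S(1)$, and a subpath of such a path need not pick up mass $1$, so the local $L^n$-mass of $\rho$ on $B(x_i,2r_i)$ does not dominate any local capacity. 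Consequently the key inequality $\sum_i\left(\log(1/r_i)\right)^{1-n}\leq C_n\int\rho^n\,dm$ is unsupported, and with it the whole proof.

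There is also a structural reason this middle step cannot be repaired in the form you state: it contains no $\eps$, so for fixed small $\eta$ it would produce covers of arbitrarily small mesh with $\sum_i\left(\log(1/r_i)\right)^{1-n}\leq C_n\eta$, i.e.\ it would prove that $\capacity X=0$ implies $H_{h_0}(X)=0$ for the \emph{critical} gauge $h_0(t)=(\log(1/t))^{1-n}$. That conclusion is false: generalized Cantor constructions give compact sets with $0<H_{h_0}(X)<\infty$, and by the classical companion result ($H_{h_0}(X)<\infty$ implies $\capacity X=0$; this is the other half of Wallin's characterization) such sets have capacity zero. So the hypothesis $\eps>0$ must enter the covering argument in an essential way. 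In the actual proof (Wallin \cite{Wallin77} -- note the paper itself gives no proof but simply cites this reference), one works with a test function $u\geq 1$ on $X$ of small $n$-energy, distributes the forced oscillation of $u$ over the whole chain of dyadic annuli between the covering scale and unit scale, and uses a H\"older/weight argument with a convergent series (this is where $\eps>0$, equivalently a Dini-type integral condition on the gauge, is used) to select for each $x\in X$ one annulus whose energy is at least comparable to $h$ of its radius; only then does a bounded-overlap covering yield $\sum_i h(r_i)\lesssim\int|\nabla u|^n\,dm$. Your version charges the entire quantity $\left(\log(1/r_i)\right)^{1-n}$ to the bottom scale, which is precisely the false strengthening.
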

\begin{proof}[Proof of Theorem \ref{thm8}]
The argument is similar to that used in the proof of Theorem~\ref{thm6} 
(and Theorem~\ref{thm5}). However, since we assume that 
$J(f)\cap B_f=\emptyset$, we now find that there exists
$\delta>0$ such  that each $x\in J(f)$ has $d:=\deg(f)$ preimages,
any two of which have chordal distance at least~$\delta$.
Moreover, $f$ is H\"older continuous with exponent
$\alpha:=K_I(f)^{1/(1-n)}$;  see~\cite[Theorem III.1.11]{Rickman93}
or
Corollary~\ref{cormar} below.
Corollary~\ref{corHh2} now yields that 
$H_h(J(f))>0$ for
$$
h(t)
=
\left(\log\frac{1}{t}\right)^{(\log d)/(\log \alpha)}
=
\left(\log\frac{1}{t}\right)^{((1-n) \log d)/(\log K_I(f))} .
$$
Since we assume that $d>K_I(f)$ we have 
$$
\frac{(1-n)\log d}{\log K_I(f)}=1-n-\eps 
$$
for some $\eps>0$. Now  the conclusion follows from
Lemma~\ref{cap-haus}.
\end{proof}

\section{Local distortion of quasi\-regular maps}\label{locdist}
The following lemma gives a generalization of
the right inequality of~\eqref{locdist1} which
may be of independent interest.

First we introduce some terminology.
Let $f:\Omega\to\R^n$ be quasi\-regular and let $U$ be 
a domain compactly contained in $\Omega$.
Then $U$ is called a \emph{normal domain} for $f$
if $f(\partial U) =\partial f(U)$.
The normal neighborhood of a point $x$ already mentioned in
section~\ref{proofthm4} is thus a normal domain $U$
satisfying $U\cap f^{-1}(f(x))=\{x\}$.
For $x\in \Omega$ and $s>0$ we denote by $U(x,f,s)$
the component of $f^{-1}(B(f(x),s))$ that contains~$x$
and by $\overline{U}(x,f,s)$ its closure.
We note that if 
$U(x,f,s)$ is compactly contained
in $\Omega$, then $U(x,f,s)$ is a normal domain
and thus $f$ is a proper map from $U(x,f,s)$
onto $B(f(x),s)$. We denote the degree of this map
by $d(x,f,s)$.
\begin{lemma}\label{mcf6}
For  $M,n\in \N$ and $K\geq 1$ there exists $c,\eta>0$ with
the following properties:
let $f:\Omega\to\R^n$ be $K$-quasi\-regular, $x\in\Omega$,
$\sigma>0$ and $0<s\leq \eta\sigma$.
Suppose that $U(x,f,\sigma)$ is compactly contained in $\Omega$
and  
$d(x,f,\sigma)\leq M$.
Define
\[
\mu(m):=\left(\frac{m}{K_I(f)}\right)^{1/(n-1)}
\]
for $1\leq m\leq M$.
Then 
there exists an integer~$m$,
depending on $x$ and $s$ and
satisfying
$d(x,f,s)\leq m\leq M$,
such that
\begin{equation}\label{martio2}
U(x,f,s/\eta)\supset \overline{B}\!\left(x,c\,s^{1/\mu(m)}\right)
\end{equation}
and
\begin{equation}\label{martio2a}
n\!\left( \overline{B}\!\left(x,c\,s^{1/\mu(m)}\right),y\right)\leq m
\quad \text{for}\ y\in B(f(x),s).
\end{equation}
\end{lemma}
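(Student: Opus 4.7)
Plan: The plan is to prove~\eqref{martio2}, i.e.\ that $R:=\dist(x,\partial U(x,f,s/\eta)) \ge c\,s^{1/\mu(m)}$ for a suitable choice of $m$, and to observe that~\eqref{martio2a} then follows automatically: $f$ restricts to a proper map of degree $d(x,f,s/\eta)$ from $U(x,f,s/\eta)$ onto $B(f(x),s/\eta)$, so once $\overline{B}(x,c\,s^{1/\mu(m)}) \subset U(x,f,s/\eta)$ every $y \in B(f(x),s)$ has at most $d(x,f,s/\eta) \le m \le M$ preimages (counted with multiplicity) in the smaller set.

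For the main estimate, I would fix $z \in \partial U(x,f,s/\eta) \cap \partial B(x,R)$; since $f(z) \in \partial B(f(x),s/\eta)$, the image $f(\overline{B}(x,R))$ is a compact connected subset of $\overline{B}(f(x),\sigma)$ containing $f(x)$ and of Euclidean diameter at least $s/\eta$. I would then apply Lemma~\ref{mcf3a} to the source-side condenser $(U(x,f,\sigma),\overline{B}(x,R))$ and invoke Rickman's Gr\"otzsch-type lower bound (Lemmas~III.1.9 and~III.1.2 of \cite{Rickman93}) on the target-side capacity to obtain
\[
\omega_{n-1}\!\left(\log\frac{\lambda_n\,\eta\,\sigma}{s}\right)^{1-n}
\;\le\; \frac{K_I(f)}{m}\,\capacity\!\bigl(U(x,f,\sigma),\overline{B}(x,R)\bigr),
\]
where I set $m := \inf_{y\in f(\overline{B}(x,R))} n(\overline{B}(x,R),y)$. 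The trivial inclusion $\overline{B}(x,R) \subset \overline{U}(x,f,s/\eta)$ gives $m \le d(x,f,s/\eta) \le M$, while choosing $\eta$ small enough --- using a Srebro-type lower bound on $f$ at scale $s$ to force $\overline{B}(x,R) \supset U(x,f,s)$ --- gives $m \ge d(x,f,s)$, so $m$ lies in the admissible range. The source-side capacity is controlled from above via~\eqref{capsub} and~\eqref{capring} by $\omega_{n-1}(\log(R_\sigma/R))^{1-n}$ with $R_\sigma := \dist(x,\partial U(x,f,\sigma))$. Rearranging produces
\[
R \;\ge\; R_\sigma\!\left(\frac{s}{\lambda_n\,\eta\,\sigma}\right)^{1/\mu(m)}.
\]

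The main obstacle is to absorb the auxiliary quantities $R_\sigma$ and $\sigma$ into constants depending only on $M$, $n$ and $K$, since $R_\sigma$ is geometry-dependent and has no a priori lower bound in terms of $\sigma$ alone. I would close this gap by a second application of the same V\"ais\"al\"a/Gr\"otzsch scheme at the coarser scale $\sigma$: the hypothesis $d(x,f,\sigma) \le M$ together with the compact containment of $U(x,f,\sigma)$ in $\Omega$ makes the comparison of $\overline{B}(x,R_\sigma)$ against a slightly enlarged concentric normal domain available in a uniform way, yielding a bound $R_\sigma \ge c_0\,\sigma^{1/\mu(M)}$ with $c_0$ depending only on $M,n,K$. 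A choice of $\eta$ small enough (again only in terms of $M,n,K$) then simultaneously absorbs the constant $\lambda_n$, secures the inclusion $\overline{B}(x,R)\supset U(x,f,s)$ needed for $m \ge d(x,f,s)$, and reconciles the two exponents $1/\mu(M)$ and $1/\mu(m)$ appearing in the combined estimate, giving~\eqref{martio2} with $c,\eta$ depending only on $M$, $n$, and $K$.
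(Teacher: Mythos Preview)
The core difficulty with your approach is the choice of~$m$. You need the \emph{same} integer $m$ to do two incompatible jobs: (i) serve as the denominator in Lemma~\ref{mcf3a}, which requires $m\le\inf_{y\in f(\overline B(x,R))}n(\overline B(x,R),y)$, and (ii) bound the counting function from above as in~\eqref{martio2a}, which requires $m\ge\sup_{y\in B(f(x),s)}n(\overline B(x,R),y)$. A Euclidean ball $\overline B(x,R)$ is not a normal domain, so its counting function is not constant on the image, and these two requirements pull in opposite directions. Concretely: your Plan needs $d(x,f,s/\eta)\le m$ to get~\eqref{martio2a}, while your main paragraph argues $m\le d(x,f,s/\eta)$; together they force $\inf_y n(\overline B(x,R),y)=d(x,f,s/\eta)$, which has no reason to hold (already for $y=f(x)$ one only gets $n(\overline B(x,R),f(x))\ge i(x,f)$, since the other preimages of $f(x)$ in $U(x,f,s/\eta)$ need not lie in $\overline B(x,R)$). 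Similarly, even granting $\overline B(x,R)\supset U(x,f,s)$, the infimum defining $m$ is taken over all of $f(\overline B(x,R))\supsetneq B(f(x),s)$, so the claim $m\ge d(x,f,s)$ is also unjustified.

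The paper avoids this by never using a Euclidean ball as the compact set in Lemma~\ref{mcf3a}. Instead, a pigeonhole argument locates $t(s)\in[s,L^Ms]$ for which $t\mapsto d(x,f,t)$ is constant (equal to~$m$) on $[t(s),Lt(s)]$; then the $K_O$-inequality (Lemma~\ref{mcfKO}) makes the condenser $\bigl(U(x,f,Lt(s)),\overline U(x,f,t(s))\bigr)$ have capacity below the threshold~$\kappa$, and Lemma~\ref{lemmax} converts this into $\overline B(x,\tau(s))\subset U(x,f,Lt(s))$ with $\tau(s)=\diam U(x,f,t(s))$. Because both $U(x,f,t(s))$ and $U(x,f,Lt(s))$ are normal domains of the \emph{same} degree~$m$, the single integer $m$ satisfies both requirements automatically; Lemma~\ref{mcf3a} applied to the normal domain $\overline U(x,f,t(s))$ then gives the lower bound on~$\tau(s)$.

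As for absorbing $R_\sigma$: your proposed ``second application'' would need a lower bound on $R_\sigma=\dist(x,\partial U(x,f,\sigma))$ in terms of $M,n,K,\sigma$ alone, but the dilation $f(y)=\lambda y$ (where $K=M=1$ and $R_\sigma=\sigma/\lambda$) shows no such bound exists. In fact the paper's own final step $\tau(s)\ge\rho\,(t(s)/\sigma)^{1/\mu(m)}\ge c\,s^{1/\mu(m)}$ also silently absorbs the geometry-dependent quantity $\rho=R_\sigma$ into~$c$; the stated dependence of $c$ is slightly imprecise, and it is only the downstream Corollary~\ref{cormar} (where the constants are allowed to depend on $f$ and the compact set~$X$) that is actually used.
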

One consequence of this lemma is the following result which says
that for $x$ in a compact subset of $\Omega$ the constants $B$ and $r$ in 
the right inequality of~\eqref{locdist1} can be chosen
independently of~$x$.
(This result is probably known, but I have not been able to 
find it in the literature.)
\begin{cor}\label{cormar}
Let 
$f:\Omega\to\R^n$ be quasi\-regular and $X\subset\Omega$ compact.
Then there exists $C,r>0$ such that
$$
|f(x)-f(y)|\leq C|x-y|^{\mu(i(x,f))}
\quad\text{for}\  
x\in X
\ \text{and}\ y\in B(x,r),
$$
 with $\mu(m)$ defined as in Lemma~\ref{martio2}.
\end{cor}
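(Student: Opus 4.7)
The plan is to apply Lemma~\ref{mcf6} with parameters $\sigma$ and $M$ chosen uniformly over the compact set $X$, and then convert the inclusion furnished by the lemma into a H\"older estimate. The first task is to produce a single $\sigma>0$ and a single $M\in\N$ such that, for every $x\in X$, the set $U(x,f,\sigma)$ is compactly contained in $\Omega$ and $d(x,f,\sigma)\leq M$. For each fixed $x$, the discreteness of $f$ gives $\bigcap_{\sigma>0}U(x,f,\sigma)=\{x\}$, so some $\sigma_x>0$ makes $U(x,f,\sigma_x)$ compactly contained in $\Omega$. By continuity of $f$, for $y$ in a sufficiently small neighborhood of $x$ one has $U(y,f,\sigma_x/2)\subset U(x,f,\sigma_x)$; a finite subcover of $X$ then yields a uniform $\sigma$. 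The uniform bound $M$ follows from the local finiteness of the multiplicity $N(f,A):=\sup_y\card(f^{-1}(y)\cap A)$ on compact $A\subset\Omega$ combined with the upper semicontinuity (and hence local boundedness) of the local index $i(\cdot,f)$.

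With $\sigma$ and $M$ fixed, let $c,\eta>0$ be the constants furnished by Lemma~\ref{mcf6} for this $M$, the dimension $n$ and $K:=K(f)$. Choose $r_0>0$ small enough that $r_0<1$, $r_0\leq c$ and $(r_0/c)^{\mu(M)}\leq\eta\sigma$. Fix $x\in X$ and $y\in\overline{B}(x,r_0)\setminus\{x\}$, and set $r:=|y-x|$ and $s:=(r/c)^{\mu(M)}$. Then $s\leq\eta\sigma$, so Lemma~\ref{mcf6} produces an integer $m$ with $d(x,f,s)\leq m\leq M$ and $\overline{B}(x,c\,s^{1/\mu(m)})\subset U(x,f,s/\eta)$. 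Because $\mu$ is increasing in its argument and $s\leq 1$, the inequality $m\leq M$ forces $c\,s^{1/\mu(m)}\geq c\,s^{1/\mu(M)}=r$, so $y\in\overline{B}(x,r)\subset U(x,f,s/\eta)$, giving
\[
|f(y)-f(x)|<\frac{s}{\eta}=\frac{|y-x|^{\mu(M)}}{c^{\mu(M)}\eta}.
\]
Since $i(x,f)\leq M$ we have $\mu(i(x,f))\leq\mu(M)$, and combining with $|y-x|<1$ yields $|y-x|^{\mu(M)}\leq|y-x|^{\mu(i(x,f))}$, producing the required estimate with $C:=1/(c^{\mu(M)}\eta)$.

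The main obstacle is the uniformization step, where the pointwise facts that $U(x,f,\sigma)$ shrinks to $\{x\}$ and that the local index and multiplicity function are finite must be leveraged into honestly uniform bounds across~$X$. This rests on the openness and discreteness of non-constant quasi\-regular maps, together with the compactness of $X$. Once that is in place, the remainder of the argument is a direct unpacking of Lemma~\ref{mcf6}, with the only subtlety being the conversion of the exponent $\mu(M)$ supplied by the lemma into the sharper $\mu(i(x,f))$ claimed in the corollary, which uses the monotonicity of $t\mapsto t^\alpha$ for $t<1$.
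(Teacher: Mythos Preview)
Your uniformization step (choosing $\sigma$ and $M$ working for all $x\in X$) is fine and matches the paper's approach. The problem lies in the conversion from the lemma to the H\"older bound: you have an inequality pointing the wrong way.

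You set $s=(r/c)^{\mu(M)}$ so that $c\,s^{1/\mu(M)}=r$, and then claim that $m\leq M$ together with $s\leq 1$ gives $c\,s^{1/\mu(m)}\geq c\,s^{1/\mu(M)}$. But $\mu$ increasing and $m\leq M$ give $1/\mu(m)\geq 1/\mu(M)$; for $s\leq 1$ this means $s^{1/\mu(m)}\leq s^{1/\mu(M)}$, the reverse of what you wrote. Thus the ball $\overline{B}(x,c\,s^{1/\mu(m)})$ guaranteed by the lemma to sit inside $U(x,f,s/\eta)$ may be \emph{smaller} than $\overline{B}(x,r)$, and you cannot conclude that $y$ lies in $U(x,f,s/\eta)$.

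The remedy, and this is exactly what the paper does, is to exploit the \emph{lower} bound on $m$ rather than the upper one. The lemma gives $m\geq d(x,f,s)\geq i(x,f)$. Hence $1/\mu(m)\leq 1/\mu(i(x,f))$, and for small $s$ one gets $s^{1/\mu(m)}\geq s^{1/\mu(i(x,f))}$, so
\[
U(x,f,s/\eta)\supset \overline{B}\!\left(x,c\,s^{1/\mu(m)}\right)\supset \overline{B}\!\left(x,c\,s^{1/\mu(i(x,f))}\right).
\]
Now set $r=c\,s^{1/\mu(i(x,f))}$ (equivalently $s=(r/c)^{\mu(i(x,f))}$), and the inclusion immediately yields $|f(y)-f(x)|\leq s/\eta=C\,r^{\mu(i(x,f))}$ with $C$ independent of $x$; no after-the-fact conversion from $\mu(M)$ to $\mu(i(x,f))$ is needed.
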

\begin{proof}
Since $X$ is compact, there exist $M\in \N$ and  $\sigma>0$ such that 
$U(x,f,\sigma)$ is compactly contained in $\Omega$
and  $d(x,f,\sigma)\leq M$
for all $x\in X$.
Let $0<s\leq\eta\sigma$
and choose $m$ according to Lemma~\ref{martio2}.
Since $m\geq d(x,f,s)\geq i(x,f)$ 
we have $s^{1/\mu(m)}\geq s^{1/\mu(i(x,f))}$ and hence
we deduce from~\eqref{martio2} that
$$U(x,f,s/\eta)\supset B\!\left(x,c\,s^{1/\mu(i(x,f))}\right).$$
Thus $|f(y)-f(x)|\leq s/\eta$ if $|y-x|= c\,s^{1/\mu(i(x,f))}$.
Solving the last equation for $s$ and substituting the result
into the estimate for $|f(y)-f(x)|$
we obtain the conclusion.
\end{proof}

The proof of Lemma~\ref{mcf6} requires the following result
known as the $K_O$-inequality~\cite[Theorem~II.10.9]{Rickman93}.
Here a condenser $(G,C)$ is called \emph{normal} for a quasi\-regular
map $f$ if $G$ is a normal domain for~$f$.
\begin{lemma}\label{mcfKO}
Let $(G,C)$ be a normal  condenser for a quasi\-regular map~$f$.
If $N\in\N$ is such that $\card( f^{-1}(y)\cap G)\leq N$
for all $y\in f(G)$, then
$$\capacity (G,C)\leq N\,K_O(f) \capacity (f(G),f(C)).$$
\end{lemma}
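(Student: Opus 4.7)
The plan is to translate the capacity inequality into a modulus inequality via Lemma~\ref{mcf1}, then establish the modulus inequality through the standard admissible-function construction for quasi\-regular maps.

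Set $\Gamma:=\Delta(C,\partial G;G)$ so that $M(\Gamma)=\capacity(G,C)$ by Lemma~\ref{mcf1}. Since $(G,C)$ is a normal condenser, we have $f(\partial G)=\partial f(G)$, and $f(C)$ is a compact subset of the open set $f(G)$, so $(f(G),f(C))$ is itself a condenser and Lemma~\ref{mcf1} gives $M(\Delta(f(C),\partial f(G);f(G)))=\capacity(f(G),f(C))$. Moreover, for each locally rectifiable $\gamma\in\Gamma$, the image $f\circ\gamma$ is a path in $f(G)$ joining $f(C)$ to $\partial f(G)$, hence belongs to $\Delta(f(C),\partial f(G);f(G))$. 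Thus if we can show the \emph{modulus} inequality
\[
M(\Gamma)\le N\,K_O(f)\,M\!\bigl(\Delta(f(C),\partial f(G);f(G))\bigr),
\]
the lemma follows immediately.

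To prove this modulus inequality, I would use the classical admissible-function pullback. Let $\rho$ be admissible for $\Delta(f(C),\partial f(G);f(G))$, and define
\[
\tilde\rho(x):=\begin{cases}\rho(f(x))\,|Df(x)|, & x\in G,\\ 0, & x\notin G.\end{cases}
\]
The first key step is to show $\tilde\rho$ is admissible for $\Gamma$. Because $f\in W^1_{n,\mathrm{loc}}$ is $\ACL$, it is absolutely continuous along almost every path, and the line-integral change of variables yields $\int_\gamma\tilde\rho\,ds\ge\int_{f\circ\gamma}\rho\,ds\ge 1$ for a.e.\ $\gamma\in\Gamma$; a standard Fuglede-type argument shows the exceptional family has zero modulus, which is harmless. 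The second key step is the integral estimate: by~\eqref{1a},
\[
\int_{\R^n}\tilde\rho^n\,dm=\int_G\rho(f(x))^n|Df(x)|^n\,dm\le K_O(f)\int_G\rho(f(x))^n J_f(x)\,dm.
\]
Applying the change-of-variables formula for quasi\-regular maps (which takes the form $\int_G u\cdot(v\circ f)\,J_f\,dm=\int_{\R^n}v(y)\bigl(\sum_{x\in f^{-1}(y)\cap G}u(x)\bigr)dm$ with multiplicity) and the hypothesis that every fibre $f^{-1}(y)\cap G$ has at most $N$ points, we obtain
\[
\int_G\rho(f(x))^n J_f(x)\,dm=\int_{f(G)}\rho(y)^n\,n(\overline{G},y)\,dm\le N\int_{f(G)}\rho(y)^n\,dm.
\]
Combining these estimates gives $M(\Gamma)\le\int\tilde\rho^n\,dm\le N\,K_O(f)\int\rho^n\,dm$, and taking the infimum over admissible $\rho$ yields the modulus inequality, and hence the capacity inequality.

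The main technical obstacle is the verification of admissibility of $\tilde\rho$: the change-of-variables for the \emph{line} integral requires that $f\circ\gamma$ is locally rectifiable and that the absolute continuity of $f$ along $\gamma$ is compatible with the parametrisation. This is precisely where the $W^1_{n,\mathrm{loc}}$ assumption enters in a non-trivial way, and the standard route is to identify a path family of modulus zero outside of which everything works. Once this analytic groundwork is in place, the algebraic steps (using the definition of $K_O(f)$ and the multiplicity-counted area formula) are routine.
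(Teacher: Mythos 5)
Your argument is correct in outline, but note that the paper does not prove this lemma at all: it is quoted verbatim from Rickman \cite[Theorem~II.10.9]{Rickman93}, so the comparison is really with Rickman's proof. Rickman works directly with the capacity definition: he pulls back a test function $u$ admissible for $(f(G),f(C))$ to $u\circ f$, which is admissible for $(G,C)$ (normality of $G$ makes $f\colon G\to f(G)$ proper, so $u\circ f$ has compact support in $G$), and then applies the a.e.\ chain rule, the dilatation inequality $|Df|^n\le K_O(f)J_f$, and the multiplicity-counted change of variables with $\card(f^{-1}(y)\cap G)\le N$. Your route is the same computation transplanted to the modulus side via Lemma~\ref{mcf1}: you pull back the admissible metric $\rho$ to $(\rho\circ f)\,|Df|$ instead of pulling back a test function. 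The two ingredients that do the work (the $K_O$ inequality and the area formula with multiplicity at most $N$; note that your $n(\overline{G},y)\le N$ should really be $\card(f^{-1}(y)\cap G)\le N$, which suffices since the two agree off the measure-zero set $f(B_f)$) are identical, and you correctly isolate where normality enters, namely $f(\partial G)=\partial f(G)$, which guarantees that image paths join $f(C)$ to $\partial f(G)$. What the test-function route buys is that it avoids exactly the technical point you flag: admissibility of the pulled-back metric requires absolute continuity of $f$ along all paths outside a family of zero modulus (Fuglede's theorem) together with a.e.\ differentiability along such paths, whereas the Sobolev chain rule for $u\circ f$ is comparatively painless. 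Since you invoke this Fuglede-type step as standard rather than proving it, your proposal is a correct reconstruction of the classical argument at the same level of detail as a textbook sketch, just in the (slightly more technical) modulus formulation.
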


We shall also need  the following lemma which can be 
deduced from~\cite[Lemma~5.42]{Vuorinen88}.
Here a condenser $(G,C)$ is called
\emph{ringlike} if $C$ and $\R^n\setminus G$ are connected.
\begin{lemma}\label{lemmax}
There exists 
$\kappa>0$
depending only on the dimension
such that if
$(G,C)$ is a ringlike condenser with $\capacity (G,C)<\kappa$, then
$\overline{B}(x,\diam C)\subset G$ for all $x\in C$.
\end{lemma}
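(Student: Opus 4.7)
The plan is to deduce the lemma from a Loewner/Teichm\"uller-type lower bound on the capacity of a ringlike condenser, which is the content of the cited result \cite[Lemma~5.42]{Vuorinen88}. That estimate asserts the existence of a dimensional constant $c_n>0$ and a continuous, decreasing function $\phi:(0,\infty)\to(0,\infty)$ such that every ringlike condenser $(G,C)$ in $\R^n$ satisfies
\[
\capacity(G,C)\ \ge\ c_n\,\phi\!\left(\frac{\dist(C,\partial G)}{\diam C}\right).
\]
This bound is obtained by comparing the modulus $M(\Delta(C,\partial G;G))$ (equal to $\capacity(G,C)$ by Lemma~\ref{mcf1}) to the modulus of a Teichm\"uller or Gr\"otzsch ring whose geometry is governed by the ratio $\dist(C,\partial G)/\diam C$; the ringlike hypothesis, guaranteeing that $\R^n\setminus G$ is itself a continuum of sufficient ``size'' to serve as the outer boundary of such a comparison ring, is exactly what makes the Teichm\"uller comparison available.

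Granted this estimate, I would set $\kappa:=c_n\,\phi(1)>0$ and assume $\capacity(G,C)<\kappa$. Then the displayed inequality yields
\[
\phi\!\left(\frac{\dist(C,\partial G)}{\diam C}\right)<\phi(1),
\]
and the monotonicity of $\phi$ forces $\dist(C,\partial G)>\diam C$. Fixing any $x\in C$, we then have $\dist(x,\partial G)>\diam C$, so the whole closed ball $\overline{B}(x,\diam C)$ is contained in $G$ (since $G$ is open and no point of $\partial G$ lies within distance $\diam C$ of $x$). This is precisely the conclusion.

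The main obstacle is correctly identifying and invoking the ringlike lower bound from \cite[Lemma~5.42]{Vuorinen88}: one must check that $\phi$ can be chosen decreasing with $\phi(t)>0$ for every finite $t$, so that the threshold $\kappa=c_n\phi(1)$ is actually positive and uniform in $(G,C)$. This is a manifestation of the Loewner property of $\R^n$---two continua in $\R^n$ that are close relative to the smaller of their diameters cannot be separated by paths of arbitrarily large extremal length---and the explicit extremal comparison with the Teichm\"uller ring carried out in Vuorinen's monograph delivers the required $\phi$ with the needed positivity and monotonicity.
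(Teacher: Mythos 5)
Your proposal takes essentially the same route as the paper, whose entire proof of this lemma is the citation of \cite[Lemma~5.42]{Vuorinen88}: you invoke the same Teichm\"uller-type lower bound for ringlike condensers, set $\kappa$ equal to its value at ratio $1$, and use monotonicity to force $\dist(C,\partial G)>\diam C$, whence $\overline{B}(x,\diam C)$ meets $G$ but not $\partial G$ and so lies in $G$ by connectedness -- exactly the intended deduction. The only point to watch is that the comparison bound genuinely involves $\min\{\diam C,\diam(\R^n\setminus G)\}$, so the complementary continuum must be nondegenerate (in particular unbounded, as it is in the paper's application where $G$ is bounded); with that reading of ``ringlike'' your argument is correct.
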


\begin{proof}[Proof of Lemma~\ref{mcf6}]
With 
$\rho:=\sup\{r>0: B(x,r)\subset U(x,f,\sigma)\}$
we have
$f(B(x,\rho))\subset B(f(x),\sigma)$.
Put $\eta:=1/L^{M+1}$ with a constant $L>1$ to be determined later.
Clearly, the function $t\mapsto d(f,x,t)$ is non-decreasing and takes
values in $\{1,2,\dots,M\}$. 
It follows that 
for
$0<s\leq \eta\sigma=\sigma/L^{M+1}$ there exists $t(s)\in [s,L^M s]$
such that $t\mapsto d(x,f,t)$ is constant in $[t(s),Lt(s)]$.
We put $m:=d(x,f,t(s))$.
From Lemma~\ref{mcfKO} we deduce that
$$
\begin{aligned}
\capacity\!\left( U(x,f,Lt(s)),\overline{U}(x,f,t(s))\right)
&\leq m K_O(f) \capacity\!\left(B(x,Lt(s)),\overline{B}(x,t(s))\right)\\
&=m K_O(f)\omega_{n-1} (\log L)^{1-n} .
\end{aligned}
$$
Since $m\leq M$ we see that that if $L$ is chosen large,
then the right hand side
is less than the constant $\kappa$ from Lemma~\ref{lemmax}.
Denoting by $\tau(s)$ the diameter of $U(x,f,t(s))$
we conclude that
\begin{equation}\label{tau1}
\overline{B}(x,\tau(s))\subset U(x,f,Lt(s))\subset U(x,f,L^{m+1}s)
=U(x,f,s/\eta).
\end{equation}
In particular, 
$\overline{B}(x,\tau(s)) \subset U(x,f,\sigma)$ and thus
$\tau(s)<\rho$.

By Lemma~\ref{mcf3a} we have
\[
\capacity\!\left( B(f(x),\sigma), \overline{B}(f(x),t(s))\right)
\leq \frac{K_I(f)}{m}\capacity\!\left( U(x,f,\sigma),
\overline{U}(x,f,t(s))\right) .
\]
Now 
$B(x,\rho)\subset U(x,f,\sigma)$
and $\overline{U}(x,f,t(s))\subset \overline{B}(x,\tau(s))$.
Noting that $\tau(s)<\rho$ we obtain
\[
\capacity\!\left( B(f(x),\sigma), \overline{B}(f(x),t(s))\right)
\leq \frac{K_I(f)}{m}\capacity\!\left( B(x,\rho),
\overline{B}(x,\tau(s))\right),
\]
Using \eqref{capring} we deduce that
$$
\left(\log\frac{\rho}{\tau(s)}\right)^{n-1}
\leq \frac{K_I(f)}{m}
\left(\log\frac{\sigma}{t(s)}\right)^{n-1} .
$$
Solving this inequality for $\tau(s)$ we obtain
$$
\tau(s)
\geq
\rho \left(\frac{t(s)}{\sigma}\right)^{1/\mu(m)} 
\geq c\, s^{1/\mu(m)}
$$
for some positive constant~$c$.
Together with~\eqref{tau1} this yields~\eqref{martio2}.

Since $\overline{B}(x,\tau(s))\subset U(x,f,Lt(s))$ by~\eqref{tau1}
we have
$n\!\left(\overline{B}(x,\tau(s)),y\right)\leq m$
for $y\in B(f(x),Lt(s))$ and thus, in particular, for $y\in B(f(x),s)$.
This is~\eqref{martio2a}.
\end{proof}

\section{A conjecture about the capacity of invariant sets}\label{conj}
We conjectured in the introduction that the 
hypothesis that $f$ is Lipschitz continuous
can be omitted in Theorem~\ref{thm5} and~\ref{thm7}.
The proof of these theorems shows that this conjecture would follow
from the next one.
\begin{conjecture}\label{conjecture-cap}
Let $f:\RR\to\RR$ be quasi\-regular with $\deg(f)>K_I(f)$.
Let $X\subset \RR\setminus E(f)$ be compact and completely invariant.
Then $\capacity X>0$.
\end{conjecture}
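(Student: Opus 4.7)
The strategy is to construct a probability measure $\mu$ supported on $X$ whose mass on small balls decays according to the gauge in Lemma~\ref{cap-haus}, and so to deduce $\capacity X>0$ via that lemma and the mass-distribution principle (Lemma~\ref{la6}). The template follows the proof of Theorem~\ref{thm8}, but with the pointwise local H\"older estimate of Corollary~\ref{cormar} (with exponent $\mu(m)=(m/K_I(f))^{1/(n-1)}$ depending on the local index $m$) playing the role of a globally uniform H\"older exponent. Concretely, I would fix a point $y_0\in X$ and form, for each $k\in\N$, the pull-back measure
$$
\mu_k:=\frac{1}{\deg(f)^k}\sum_{z\in f^{-k}(y_0)} i(z,f^k)\,\delta_z,
$$
which is a probability measure on $X$ by complete invariance and the degree formula $\sum_{z\in f^{-k}(y_0)}i(z,f^k)=\deg(f)^k$. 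Any weak-$*$ subsequential limit $\mu$ is supported on~$X$.

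To estimate $\mu(B(x,r))$ for $x\in X$ and small $r$, I would iterate Corollary~\ref{cormar} along the forward orbit $x=z_k,\,z_{k-1}=f(x),\dots,z_0=f^k(x)=y_0$, obtaining
$$
f^k(B(x,r))\subset B\!\left(y_0,\,C\,r^{\prod_{j=1}^k\mu(I_j)}\right),\qquad I_j:=i(z_j,f).
$$
A counting-function argument (either a Koebe-type distortion bound or the estimates of Theorem~\ref{thm-mr2} applied to $f^{k-1}$) should then control the total weighted mass of $k$-preimages of $y_0$ landing in $B(x,r)$ by a multiple of $\prod_{j=1}^{k}I_j/\deg(f)^k$. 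Substituting $\prod_j\mu(I_j)=(\prod_j I_j/K_I(f)^k)^{1/(n-1)}$ reduces the desired inequality $\mu(B(x,r))\le C\left(\log\frac{1}{r}\right)^{1-n-\eps}$ to an algebraic condition which, provided $\eps<(n-1)\log(\deg(f)/K_I(f))/\log K_I(f)$, is satisfied as long as $\prod_j I_j$ does not approach $\deg(f)^k$ too closely.

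The main obstacle is therefore the control of backward chains along which $I_j$ is persistently close to $\deg(f)$. In the extreme case $I_j=\deg(f)$ for every $j$, a single totally ramified backward orbit carries all of the mass at each level and $\mu$ collapses to a Dirac measure, for which the ball estimate fails. Because $X\cap E(f)=\emptyset$, Theorem~\ref{thm4} together with the strong local contraction produced by such totally ramified preimages (as in the proof of Theorem~\ref{thm4} using Lemma~\ref{lemma-m70a}) should prevent such degenerate chains from dominating; making this quantitative---e.g.\ by showing that pathological backward orbits carry asymptotically negligible $\mu_k$-mass, or by modifying the construction to exclude them from the support of the limit measure---is, in my view, the technical heart of the argument and presumably the reason the conjecture has so far resisted a proof.
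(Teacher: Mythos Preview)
This statement is Conjecture~\ref{conjecture-cap}; the paper does not prove it. Section~\ref{conj} carries out precisely the programme you describe: form the pull-back probability measures
\[
\nu_k=\frac{1}{\deg(f)^k}\sum_{z\in f^{-k}(y)} i(z,f^k)\,\delta_z,
\]
pass to a weak-$*$ limit $\nu$ supported on $X$, and attempt to verify $\nu(B(x,r))\le h(r)$ for the gauge $h$ of Lemma~\ref{cap-haus}. So your strategy coincides with the paper's heuristic, and your final paragraph is candid that the argument is incomplete.

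Where your diagnosis of the gap differs from the paper's: you locate the difficulty in backward chains with $I_j\equiv\deg(f)$, but in the proof of Theorem~\ref{thm-conj-cap} (which establishes the conjecture under an extra hypothesis) exactly that case is handled---such a chain forces the orbit into $B_f^*$, and Lemma~\ref{lemma-m70a} then produces a periodic limit point in $E(f)$, contradicting $X\cap E(f)=\emptyset$. The obstruction the paper isolates is instead a distortion-matching problem. Lemma~\ref{mcf6} supplies, for each small $s$, an integer $m$ together with a radius $r=c\,s^{1/\mu(m)}$ for which both the inclusion~\eqref{martio2} and the multiplicity bound~\eqref{martio2a} hold; it does \emph{not} supply such an $m$ for every small~$r$. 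Iterating therefore yields $\nu(B(x_0,\rho_k))\le h(\rho_k)$ only along a particular sequence $\rho_k\to 0$, whereas Lemmas~\ref{la6} and~\ref{cap-haus} need it for all small~$r$. Your route via Corollary~\ref{cormar} inherits the same defect in a different guise: that corollary gives the H\"older inclusion with exponent $\mu(i(x,f))$ but no companion multiplicity bound of type~\eqref{martio3a} at the same scale, so the step you summarise as ``control the total weighted mass by $\prod_j I_j/\deg(f)^k$'' is precisely the unjustified one. Theorem~\ref{thm-conj-cap} makes this explicit by \emph{assuming} that for every $r$ some $m$ satisfies both~\eqref{martio3} and~\eqref{martio3a}, and then the argument goes through.
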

While we have been unable to prove this conjecture, we 
give some arguments in favor of it.
The idea is to consider, for fixed $y\in X$, the measures
\begin{equation}\label{c11}
\nu_k
:=
\frac{1}{\deg(f)^k}\sum_{x\in f^{-k}(y)} i\!\left(x,f^k\right) \delta_x .
\end{equation}
These measures play an important role in
complex dynamics; cf.~\cite[Section~161]{Steinmetz93}.
Let $\nu$ be the limit of a convergent subsequence.
Assuming without loss of generality that $X\subset\R^n$,
we deduce from Lemmas~\ref{la6} and~\ref{cap-haus} that
 it suffices to prove
that $\nu(B(x,r))\leq h(r)$ for $0<r\leq\eta$ and all $x\in X$.

Now it follows from Lemma~\ref{mcf6} that if $s$ is sufficiently small
and $x\in X$,
then there exists $m$ satisfying $i(x,f)\leq m\leq \deg(f)$ such that
$$\nu\!\left(B\!\left(x,c s^{1/\mu(m)} \right)\right) \leq \frac{m}{\deg(f)}
\nu(B(f(x),s/ \eta )).$$
Thus given $\tau>1$ there exists $\rho_0\in (0,1)$ such that 
 if $x\in X$ and $0<r\leq \rho_0$,
then
\begin{equation}\label{d11}
\nu\!\left(B\!\left(x,r^{\tau/\mu(m)} \right)\right) \leq \frac{m}{\deg(f)}
\nu(B(f(x),r))
\end{equation}
for some $m=m(x,r)$ satisfying $i(x,f)\leq m\leq \deg(f)$ .

Let $x_0\in X$ and put $x_k:=f^k(x_0)$ for $k\in \N$.
Fix $k\in\N$, put $t_{k,k}:=\rho_0$ and,
for $0\leq j\leq k-1$,
define $t_{j,k}$ recursively by 
$t_{j,k}:=t_{j+1,k}^{\tau/\mu(m_j)}$
where $m_j:= m(x_j,t_{j+1,k})$.
Finally put $\rho_k:=t_{0,k}$.
Suppose for simplicity that $t_{j,k}\leq \rho_0$ for all~$j$.
It then follows from~\eqref{d11} that
$$\nu(B(x_0,\rho_k))\leq 
\left(\prod_{j=0}^{k-1} \frac{m_j}{\deg(f)}\right)
\nu(B(x_k,\rho_0))\leq \frac{1}{\deg(f)^k} \prod_{j=0}^{k-1} m_j.$$
Since 
$$\frac{\log \rho_k}{\log \rho_0}
=\prod_{j=0}^{k-1}\frac{\tau}{\mu(m_j)}
=\left(\tau K_I(f)^{1/(n-1)}\right)^k \prod_{j=0}^{k-1}
\left(\frac{1}{m_j}\right)^{1/(n-1)}$$
we have 
$$
h(\rho_k)
=
h(\rho_0)
\left(\frac{1}{\tau^{n+\eps-1} K_I(f)^{(n+\eps-1)/(n-1)}}\right)^{\! k}
\; \prod_{j=0}^{k-1}
m_j^{(n+\eps-1)/(n-1)}
$$
for the function $h$ defined by~\eqref{chh}.
Choosing $\tau$ close to $1$ and $\eps$ small we 
have $\tau^{n+\eps-1} K_I(f)^{(n+\eps-1)/(n-1)}<\deg (f)$ and hence
$\nu(B(x_0,\rho_k))\leq h(\rho_k)$ for large~$k$.

However, in order to apply Lemma~\ref{cap-haus} we would need
that $\nu(B(x_0,r))\leq h(r)$ for all small~$r$, not only
on a sequene of $r$-values.
Therefore this argument  can only be considered as
support for  Conjecture~\ref{conjecture-cap},
it does not prove it.
\begin{rem}
With the terminology of Lemma~\ref{mcf6}, let $r=c\,s^{1/\mu(m)}$.
It then follows from~\eqref{martio2} and~\eqref{martio2a}
that 
\begin{equation}\label{martio3}
f\!\left( \overline{B}(x,r)\right)
\subset  \overline{B}\!\left(f(x),C r^{\mu(m)}\right)
\end{equation}
and
\begin{equation}\label{martio3a}
n\!\left( \overline{B}(x,r),y\right)\leq m
\quad \text{for}\ y\in \overline{B}\!\left(f(x),C r^{\mu(m)}\right)
\end{equation}
for some $C>0$.
However, Lemma~\ref{mcf6} only says that for every $s$ there 
exists $m$ such that~\eqref{martio3} and~\eqref{martio3a} hold
with $r$ as defined above.
It does not yield that for every $r$ there 
exists $m$ such that~\eqref{martio3} and~\eqref{martio3a} hold.
Under the assumption  that this is the case we can prove
Conjecture~\ref{conjecture-cap}.
\end{rem}
\begin{theorem}\label{thm-conj-cap}
Let $f:\RR\to\RR$ be quasi\-regular with $\deg(f)>K_I(f)$.
Let $X\subset \RR\setminus E(f)$ be compact and completely invariant.
Suppose that there exist $C,\rho>0$ such that for every $x\in X$ and 
$r\in(0,\rho]$
there exists $m\in \{1,\dots,\deg(f)\}$ such that~\eqref{martio3} 
and~\eqref{martio3a} hold.
Then $\capacity X>0$.
\end{theorem}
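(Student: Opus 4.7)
The plan is to carry out rigorously the heuristic sketched just before Conjecture~\ref{conjecture-cap}, now having the bounds~\eqref{martio3} and~\eqref{martio3a} at our disposal at \emph{every} scale rather than only along a sparse sequence. Assuming $X\ne\emptyset$, fix $y_0\in X$ and set $d:=\deg(f)$. Define the probability measures
\[
\nu_k := d^{-k}\sum_{z\in f^{-k}(y_0)} i(z,f^k)\,\delta_z,
\]
which are supported on $X$ by complete invariance. By Banach--Alaoglu a subsequence $\nu_{k_j}$ converges weak-$*$ to a probability measure $\nu$ on~$X$. The goal is to show $\nu(B(x,r))\le h(r)$ for every $x\in X$ and all sufficiently small $r$, where $h(t)=(\log(1/t))^{1-n-\varepsilon}$ is the gauge from Lemma~\ref{cap-haus}; Lemma~\ref{la6} then yields $H_h(X)>0$ and Lemma~\ref{cap-haus} yields $\capacity X>0$.

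Given $x_0\in X$ and small $r_0\in(0,\rho]$, set $x_j:=f^j(x_0)$ and iterate the hypothesis: choose $m_j\in\{1,\dots,d\}$ realising \eqref{martio3} and \eqref{martio3a} at $(x_j,r_j)$, set $r_{j+1}:=Cr_j^{\mu(m_j)}$, and let $K$ be the first index with $r_K>\rho$ (the case $K=\infty$ is addressed below). From~\eqref{martio3}, every $z\in f^{-k}(y_0)\cap\overline{B}(x_j,r_j)$ satisfies $f(z)\in\overline{B}(x_{j+1},r_{j+1})$; from~\eqref{martio3a}, above each such image there are at most $m_j$ preimages in $\overline{B}(x_j,r_j)$ counted with multiplicity. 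Hence
\[
\nu_k(\overline{B}(x_j,r_j))\le \frac{m_j}{d}\,\nu_{k-1}(\overline{B}(x_{j+1},r_{j+1})),
\]
and iterating together with $\nu_{k-K}\le 1$ yields $\nu_k(\overline{B}(x_0,r_0))\le d^{-K}\prod_{j=0}^{K-1}m_j$ for all $k\ge K$.

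Writing $u_j:=\log(1/r_j)$, $a:=\log(d/K_I(f))>0$, $Q:=\prod\mu(m_j)$ and using $m_j=K_I(f)\mu(m_j)^{n-1}$,
\[
\log\!\Big(d^{-K}\prod m_j\Big)=-Ka+(n-1)\log Q.
\]
The linear recursion $u_{j+1}=\mu(m_j)u_j-\log C$ iterates to $u_K=Qu_0+O(1)$, so $\log Q\le-\log u_0+O(1)$ since $u_K\le\log(1/\rho)=O(1)$. As $\mu(m_j)\ge\mu(1)=K_I(f)^{-1/(n-1)}$, the same recursion contracts by at most the factor $\mu(1)$ per step, giving $u_K\ge\mu(1)^K u_0-O(1)$; combined with $u_K\le\log(1/\rho)$ this forces $K\ge(n-1)\log u_0/\log K_I(f)-O(1)$. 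Substituting,
\[
\log\!\Big(d^{-K}\prod m_j\Big)\le(1-n-\varepsilon)\log u_0=\log h(r_0)
\]
for any $\varepsilon$ with $0<\varepsilon<(n-1)a/\log K_I(f)$, the range being non-empty precisely because $d>K_I(f)$. Passing to the weak-$*$ limit on open balls finishes the argument.

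The main obstacle is making the error control in the recursion for $u_j$ rigorous and uniform in the adversarially chosen sequence $(m_j)$, and handling the edge case $K=\infty$. In that case the bound $\nu_k(\overline{B}(x_0,r_0))\le d^{-k}\prod_{j<k}m_j$ holds for all $k$; one argues that either $m_j<d$ infinitely often, in which case the right-hand side tends to zero, or the orbit stays entirely inside $\{x:i(x,f)=d\}$, in which case Lemma~\ref{lemma-m70a} gives a Lipschitz-type estimate along the orbit that puts us in the setting of Section~\ref{proofthm5}.
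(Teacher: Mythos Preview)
Your approach is the paper's. The gap you flag---controlling the additive error in $u_{j+1}=\mu(m_j)u_j-\log C$---is genuine and not merely cosmetic: the iterated error equals $(\log C)\sum_{j<K}\prod_{i>j}\mu(m_i)$, and since $\mu(d)>1$ these suffix products can be as large as $\mu(d)^{K-1-j}$, so the sum is of order $\mu(d)^{K}$ and the claim ``$u_K=Qu_0+O(1)$'' fails in general. (Your lower bound $u_K\ge\mu(1)^Ku_0-O(1)$ does survive, because there the relevant geometric series has ratio $\mu(1)<1$ and hence is bounded.) The paper removes the additive term by a one-line device you are missing: for any fixed $\alpha\in(0,1)$ one has $Cr^{\mu(m)}\le r^{\alpha\mu(m)}$ once $r$ is small enough, uniformly in $m\in\{1,\dots,d\}$; hence one may replace the recursion by $r_{j+1}:=r_j^{\alpha\mu(m_j)}$, i.e.\ $u_{j+1}=\alpha\mu(m_j)u_j$, which is purely multiplicative and yields $u_K=\alpha^KQu_0$ exactly. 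With this change your algebra goes through verbatim with $\alpha$ chosen close to~$1$; the factor $\alpha$ absorbs what you were trying to hide in ``$O(1)$'', but in a way that is uniform in the sequence $(m_j)$.

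Your treatment of $K=\infty$ is also the paper's line: if $m_j<d$ infinitely often then $\prod_{j<k}(m_j/d)\to 0$ and $\nu(B(x_0,r_0))=0$; if $m_j=d$ eventually then one argues $x_j\in B_f^*$ eventually, and the contraction from Lemma~\ref{lemma-m70a} combined with a pigeonhole argument (exactly as in the proof of Theorem~\ref{thm5}, so your cross-reference is apt) shows that a subsequence of $(x_j)$ is Cauchy and converges to a periodic point $\xi\in B_f^*\subset E(f)$. Since $\xi\in X$ by compactness, this contradicts $X\cap E(f)=\emptyset$.
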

\begin{proof}
Fix $y\in X$ and let $\nu$ be limit of a convergent subsequence
of the sequence $(\nu_k)$ defined by~\eqref{c11}.
Using~\eqref{martio3} and~\eqref{martio3a} we find that 
for $0<r\leq \rho$ and $x\in X$ there exists $m=m(x,r)$ such that
$$
\nu(B(x,r)) \leq \frac{m}{\deg(f)}
\nu\!\left(B\!\left(f(x),Cr^{\mu(m)}\right)\right).
$$
Given $\alpha\in (0,1)$ there thus exists $\rho_0>0$ such that 
$$
\nu(B(x,r)) \leq \frac{m}{\deg(f)}
\nu\!\left(B\!\left(f(x),r^{\alpha\mu(m)}\right)\right)
$$
for $0<r\leq \rho_0$.
Now  let $x\in X$ and $0<r\leq \rho_0$ and
put $x_0:=x$, $r_0:=r$ and $m_0:=m(x_0,r_0)$, 
and define
$x_k$, $r_k$ and $m_k$ for $k\geq 1$ recursively by 
$x_k:=f(x_{k-1})$,
$r_k:=r_{k-1}^{\alpha\mu(m_{k-1})}$
and 
$m_k:=m(x_k,r_k)$, as long as $r_k\leq \rho_0$.

First suppose that the process stops after $k$ steps; 
that is, $r_{k}^{\alpha\mu(m_{k})}>\rho_0$.
Then $r_{k}>\rho_1:=\rho_0^{1/(\alpha\mu(m_{1}))}$.
We conclude that
\begin{equation}\label{p11a}
\frac{\log r}{\log \rho_1}
\leq 
\frac{\log r_0}{\log r_k}
=\prod_{j=0}^{k-1} \alpha \mu(m_{j})
=\left( \frac{\alpha }{K_I(f)^{1/(n-1)}}\right)^{\! k}
\; \prod_{j=0}^{k-1} m_{j}^{1/(n-1)}.
\end{equation}
On the other hand,
\begin{equation}\label{p11b}
\begin{aligned}
\nu(
B(x,r))
&=\nu(B(x_0,r_0))
\\ &
\leq 
\left(\prod_{j=0}^{k-1} \frac{m_j}{\deg(f)}\right) 
\nu(B(x_k,r_k))
\\ &
\leq \frac{1}{\deg(f)^k} \prod_{j=0}^{k-1} m_{j}.
\end{aligned}
\end{equation}
Choosing $\alpha$ close to $1$ we deduce from~\eqref{p11a}
and~\eqref{p11b}
that there exist $c,\eps>0$ such that
\begin{equation}\label{p11c}
\nu(B(x,r))\leq c 
\left(\log\frac{1}{r}\right)^{1-n-\eps}.
\end{equation}

Suppose now that the inductive process defining $x_k$, $r_k$ 
and $m_k$ does not stop; that is, $r_k\leq \rho_0$ for all
$k\in\N$.
We shall show that there are infinitely many~$k$ such that
$m_k<\deg(f)$. In order to do so, we assume that this 
is not the case; say $m_k=\deg(f)$ for $k\geq k_0$.
With $B_f^*$ as defined in Lemma~\ref{lemma-m70a} we then
have $x_k\in B_f^*$ for $k\geq k_0$ and it follows from
this lemma that there exists $\delta>0$ such that if 
$|x-x_k|\leq \delta$, then $|f(x)-f(x_k)|\leq |x-x_k|/2$,
provided $k\geq k_0$.
Now there exist $l\geq k_0$ and $p\in\N$ such that 
$|x_{l+p}-x_l|\leq \delta$.
We deduce that $|x_{l+(j+1)p}-x_{l+jp}|\leq \delta/2^{jp}$.
Thus $(x_{l+jp})_{j\in\N}$ is a Cauchy sequence and
hence convergent; say $x_{l+jp}\to\xi$.
It follows that $f^p(\xi)=\xi$ and $\xi\in B_f^*$.
This implies that $\xi\in E(f)$. 
On the other hand, we have $\xi\in X$ since $X$ is compact.
Since $X\cap E(f)=\emptyset$ by hypothesis, this is a 
contradiction.

Thus $m_k<\deg(f)$ for infinitely many~$k$.
It now follows from~\eqref{p11b} that $\nu(B(x,r))=0$.
Thus~\eqref{p11c} also holds in this case.
The conclusion now follows from~\eqref{p11c}, Lemma~\ref{la6} and
Lemma~\ref{cap-haus}.
\end{proof}
\begin{rem}
We have restricted to sets $X\subset \R^n$ in Theorem~\ref{thm-conj-cap}
only for simplicity.
It also holds for $X\subset \RR$, provided we replace
the Euclidean balls in~\eqref{martio3} and~\eqref{martio3a}
by balls with respect to the chordal metric.

It follows from our considerations that if $f:\RR\to\RR$
is quasi\-regular with $\deg(f)>K_I(f)$ and 
if~\eqref{martio3} and~\eqref{martio3a} hold (with chordal
balls) for all $x\in\RR$ and $r\in (0, \rho_0]$ with some
$m=m(x,r)\in\{1,\dots,\deg(f)\}$, then
the conclusions of Theorems~\ref{thm5} and~\ref{thm7} hold.
\end{rem}

\end{document}